\documentclass{article}

\usepackage{tikz}
\usetikzlibrary{shapes, decorations.pathreplacing, patterns}
\usetikzlibrary{arrows.meta}

\usetikzlibrary{math}
\usepackage{hyperref}
\usepackage{algorithm}
\usepackage{algpseudocode}

\usepackage[normalem]{ulem}

\newcommand{\blambda}{\boldsymbol{\lambda}}
\newcommand{\domain}{\mathcal{D}}
\newtheorem{definition}{Definition}
\newtheorem{assumption}{Assumption}

\usepackage{amssymb,amsmath,psfrag,graphicx}
\usepackage{equationarray}
\usepackage{verbatim}
\usepackage{color}
\usepackage{morefloats}
\usepackage{colortbl}
\usepackage{fullpage}
\usepackage{enumitem}
\usepackage{tabularx}
\usepackage{marginnote}

\newtheorem{theorem}{Theorem} 
\newtheorem{lemma}[theorem]{Lemma} 
\newtheorem{corollary}[theorem]{Corollary} 
\newtheorem{remark}{Remark} 

\newenvironment{proof}{\paragraph{Proof.}}{\hfill$\square$}

\newcommand{\pv}{\boldsymbol{p}}

\newcommand{\uv}{\boldsymbol{u}}
\newcommand{\wv}{\boldsymbol{w}}

\newcommand{\fbv}{\boldsymbol{f}}

\newcommand{\fv}{\boldsymbol{f}}
\newcommand{\zerov}{\boldsymbol{0}}
\newcommand{\lbv}{\boldsymbol{\lambda}}
\newcommand{\lv}{\boldsymbol{\lambda}}
\newcommand{\zetav}{\boldsymbol{\zeta}}
\newcommand{\zetabv}{\boldsymbol{\zeta}}

\newcommand{\etav}{\boldsymbol{\eta}}

\newcommand{\chiv}{\boldsymbol{\chi}}
\newcommand{\chibv}{\boldsymbol{\chi}}

\newcommand{\phibv}{\underline{\boldsymbol\phi}}

\newcommand{\psibv}{\boldsymbol{\psi}}
\definecolor{violachiaro}{rgb}{.8,0.69,1}

\newcolumntype{L}{>{$}l<{$}}
\newcolumntype{C}{>{$}c<{$}}
\newcolumntype{R}{>{$}r<{$}}

\definecolor{yel}{rgb}{1,1,0.554}

\usepackage{mathrsfs}

\title{Algebraic convergence analysis for the Interface Control Domain Decomposition (ICDD) method}
\author{Marco Discacciati, Paola Gervasio, Alfio Quarteroni}
\date{\today}

\begin{document}

\maketitle
\begin{abstract}
We develop the convergence analysis of the Interface Control Domain Decomposition (ICDD) method, an overlapping domain decomposition method based on an optimal control framework with Dirichlet interface control functions and interface observation. We consider elliptic problems with possible discontinuous coefficients approximated by $hp-$FEM in each subdomain. When the discretizations are conforming on the overlap between 2D domains, we provide theoretical estimates of the number of GMRES iterations needed to solve the non-symmetric interface Schur complement system associated with ICDD. Our results are obtained by combining novel spectral estimates for the Schur complement matrix of ICDD and classical GMRES convergence theory. We prove that the convergence rate behaves as $\mathcal{O}(\delta^{-1} p^{3/2}\log p)$, where $\delta$ denotes the overlap width and $p$ the local polynomial degree, while remaining independent of the mesh size $h$. Numerical experiments verify the theoretical predictions and show the effectiveness of ICDD in the presence of large coefficient jumps in the computational domain. Since in the conforming case, the considered ICDD formulation coincides with the Substructured Restricted Additive Schwarz (SRAS) method, the analysis also provides convergence estimates for SRAS in two dimensions and, through its known equivalence, for the Restricted Additive Schwarz (RAS) method.
\end{abstract}

\medskip

{\bf Key words.}
Overlapping domain decomposition, Schur complement, Restricted Additive Schwarz preconditioner, $hp-$FEM, elliptic equations.

\medskip

{\bf AMS subject classifications.} 65N55, 65N30, 65F10, 65N35

\section{Introduction}
The efficient numerical approximation of problems governed by partial differential equations is key in many areas of science and engineering. This has urged the development of a variety of decomposition methods that split a given global discretized problem (typically, a large-dimensional algebraic system) into a family of smaller subproblems. Classical techniques used in this context are non-overlapping and overlapping Schwarz methods, as presented in \cite{sbg,toselli-widlund,dolean-jolivet-nataf} and references therein.

Non-overlapping techniques rely on ad-hoc coupling conditions and are usually formulated in terms of interface variables alone. On the other hand, overlapping Schwarz methods normally involve volume degrees of freedom and exploit simpler coupling conditions (e.g., the continuity of the primal unknown). Although the possibility of working with interface variables in the context of overlapping methods was already outlined in \cite{sbg}, it has only recently been considered in the Substructured Restricted Additive Schwarz (SRAS) method \cite{SRAS2022}. 

A more flexible and general overlapping approach than SRAS  to couple both homogeneous and heterogeneous problems had been proposed in \cite{Glowinski:1983:DDM, Lions:1998:APS, lions:m2an:2000, glq, agq}, as an optimal control problem on interface traces, named virtual controls. This idea was then further extended, yielding the \emph{Interface Control Domain Decomposition} (ICDD) method \cite{dgq_ell1,dgq_ijnmf,dgq_ss,dggq,dg_jcp}, an overlapping domain decomposition method based on an optimal control framework with cost functional and control functions acting on the internal boundaries of the local subdomains. Different from approaches based on Lagrange multipliers, ICDD does not need to introduce any additional degrees of freedom. Instead, it provides a flexible coupling framework that uses standard coupling conditions of Dirichlet or Neumann type and exploits a small overlapping region between subdomains. In particular, in this work, we consider a version of ICDD where control variables play the role of the (unknown) traces of the solution at the interfaces (the internal boundaries of the subdomains), and the observation is on the interfaces themselves \cite{dgq_ell1}.

The well-posedness analysis of the method was carried out in \cite{dgq_ell1} using optimal control theory, and the method has been successfully applied to various linear partial differential equations of both homogeneous and heterogeneous types, also including reduced-order models \cite{dgq_ell1,dgq_ijnmf,dgq_ss,dggq,dg_jcp,discacciati-giacomini-0,discacciati-giacomini-1,discacciati-giacomini-2}.

From a computational point of view, the optimal control problem is tackled by solving the associated optimality system, which includes a couple of PDEs and a couple of equations on the interfaces. The optimality system of ICDD was originally composed of two couples of PDEs: two primal problems and two dual problems, plus two equations on the interfaces \cite{dgq_ell1,dgq_ijnmf,dggq}. When the overlap width is really tiny, solving the optimality system of ICDD with the dual problems performs better than solving the optimality system without dual problems, but it is, in general, more expensive, so we omit them in the formulation considered in this work. At the algebraic level, we solve the optimality system by constructing its Schur complement with respect to the interface degrees of freedom, that is, the Dirichlet controls associated with the optimal control problem. As information between the local subproblems is only exchanged through the trace of the state solution on the internal interfaces, the local solvers can be treated as ``black boxes''. This makes the implementation of the method fully non-intrusive, reducing communication costs among computing units on parallel architectures.

\medskip

In this paper, we analyze the convergence rate of ICDD iterations in the case of Dirichlet controls and interface observation for a second-order elliptic operator with possibly discontinuous coefficients in two subdomains. We aim to provide convergence estimates with respect to the overlap width $\delta$ and the discretization parameters (mesh size $h$ and local polynomial degree $p$ of an $hp$-FEM formulation). The Schur complement system of ICDD is non-symmetric; hence, we solve it by GMRES iterations. Because of the lack of symmetry, the condition number is not a good indicator of the convergence rate of the method. Instead, we can apply the theory of \cite{eisenstat-elman-schultz,saad_schultz} to estimate the minimum number of GMRES iterations guaranteeing that the relative residual is less than a given tolerance. In particular, we prove that, when the discretizations inside the two subdomains are conforming on the overlap, the convergence rate of the ICDD method is $\mathcal O(\delta^{-1}p^{3/2}\log p )$ and is independent of the mesh-size $h$. 
However, numerical results also show that the convergence rate is in fact independent of $p$.

Moreover, when the PDE features highly jumping coefficients across the interface, the convergence rate of ICDD tends to lose the dependence on the overlap width $\delta$: the larger the jump, the lower the number of iterations and the weaker the dependence on $\delta$.

It must be observed that, when the discretization is conforming on the overlap, the ICDD method considered in this work shares some similarities with SRAS \cite{SRAS2022}, for which a convergence analysis has not been established yet to the best of our knowledge. In \cite{SRAS2022}, it was proved that SRAS is equivalent to the Restricted Additive Schwarz (RAS) method proposed in \cite{Cai-Sarkis1999}, although the associated linear systems have significantly different dimensions. In fact, while RAS looks for volume degrees of freedom, SRAS is limited to the interface ones. The convergence analysis of RAS has been carried out in \cite{frommer_szyld} and \cite{sarkis-dryja}. In \cite{frommer_szyld}, the authors proved convergence for M-matrices using weighted max norms, without making explicit the dependence on the overlap width $\delta$. In \cite{sarkis-dryja}, provided that the system matrix corresponds to the discretization of the Laplace equation in 1D domains by second-order centered finite differences, the authors proved that the convergence rate of RAS with coarse level behaves like $(1+H/h)$, where $H$ is the subdomain size and $h$ is the mesh size. However, the analysis with discontinuous coefficients, more general discretizations, and 2D and 3D domains is still an open problem. 

In those cases where ICDD and SRAS coincide, the theory developed here advances the state of the art by providing a convergence rate for the latter method as well. Given the equivalence between SRAS and RAS discussed above, our results also apply to the RAS method.

The paper is organized as follows. In Section \ref{sec:settings}, we review the ICDD method, its $hp$-FEM discretization, and algebraic formulation. Moreover, we compare ICDD with other overlapping methods, like Additive Schwarz, RAS, and SRAS. In Section \ref{sec:convergence-analysis}, we analyse the spectral properties of the interface Schur complement matrices associated with ICDD, and in Sect.~\ref{sec:convergenceGMRES}, we use these results to estimate the convergence rate of ICDD. Numerical results to validate the theory are presented in Sects.~\ref{sec:numericalValidation} and \ref{sec:numerical_results}. Some technical results and pseudo-algorithms to practically implement the ICDD method are reported in the Appendix \ref{sec:appendix}.

\section{The ICDD coupling method}\label{sec:settings}

In this section, we review the formulation of the ICDD method considering the case of two subdomains for simplicity of exposition. We introduce the optimality system associated with the method, and its algebraic form highlighting the role of interface control variables.

\subsection{Formulation and optimality system}

Let $\Omega \subset \mathbb{R}^d$ ($d=1,2,3$) be an open bounded domain with Lipschitz boundary, where we consider the boundary value problem: find $u$ in $\Omega$ such that
\begin{equation}\label{eq:globalProblem}
\begin{array}{rl}
Lu = f & \quad \text{in } \Omega, \\
u=0 & \quad \text{on } \partial \Omega.
\end{array}
\end{equation}
Here, $f \in L^2(\Omega)$ is a given function, while $L$ is a second-order linear elliptic coercive operator so that \eqref{eq:globalProblem} is well posed. For example, we can consider
\begin{equation}\label{eq:operatorL}
Lu = - \nabla \cdot (\nu \nabla u) + \gamma u \,,
\end{equation}
with $\nu,\gamma \in L^\infty (\Omega)$ and such that $\exists \underline{\nu} > 0$ such that $\nu(\mathbf{x}) \geq \underline{\nu}$ a.e. in $\Omega$, and $\gamma(\mathbf{x}) \geq 0$ a.e. in $\Omega$.

We split $\Omega$ into two non-overlapping subdomains $\widetilde{\Omega}_1$ and $\widetilde{\Omega}_2$ where the coefficients $\nu$ and $\gamma$ may assume different values with a possible discontinuity at the common boundary $\partial \widetilde{\Omega}_1 \cap \partial \widetilde{\Omega}_2$:
\begin{equation*}
\nu =
\left\{
\begin{array}{ll}
\widetilde{\nu}_1 & \quad \text{in } \widetilde{\Omega}_1 \\
\widetilde{\nu}_2 & \quad \text{in } \widetilde{\Omega}_2 \\
\end{array}
\right.
\quad \text{and} \quad
\gamma =
\left\{
\begin{array}{ll}
\widetilde{\gamma}_1 & \quad \text{in } \widetilde{\Omega}_1 \\
\widetilde{\gamma}_2 & \quad \text{in } \widetilde{\Omega}_2. \\
\end{array}
\right.
\end{equation*}
For $k,\ell=1,2$ and $k \not= \ell$, we now extend the local subdomain $\widetilde{\Omega}_k$ into $\widetilde{\Omega}_\ell$ to form two overlapping subdomains $\Omega_1$ and $\Omega_2$ such that $\overline{\Omega} = \overline{\Omega_1 \cup \Omega_2}$. For $k=1,2$, $\Gamma_k = \partial \Omega_k \setminus \partial \Omega$ are two internal boundaries that we call \textit{interfaces}, and we assume that $\exists \underline{\delta}>0$ such that $\delta:=dist(\Gamma_1,\Gamma_2)\geq \underline{\delta}>0$. (This implies that $(\partial \Omega_k \setminus \Gamma_k )\subset \partial \Omega$, and that $\Gamma_1 \cap \Gamma_2 = \emptyset$.) An example of splitting in a simple cartesian setting in shown in Fig. \ref{fig:domain}.

\begin{figure}[h]
\begin{center}
\begin{tikzpicture}
	\draw[black,pattern=north east lines, pattern color=black!30] (0, 0) rectangle (2, 2);
    \node[] at (1.0, 1.0) {$\widetilde{\Omega}_1$};
    \draw[<->, dashed]  (0, 0.7) -- (2, 0.7);
    \draw[blue,pattern=north west lines, pattern color=blue!30] (2, 0) rectangle (4, 2);
    \node[] at (3.0, 1.0) {$\widetilde{\Omega}_2$};
    \draw[<->, dashed]  (2, 1.3) -- (4, 1.3);
    
    \draw[->, thick]  (4.5, 1.0) -- (5.5, 1.0);
    
	\draw[black, pattern=north east lines, pattern color=black!30] (6, 0) rectangle (8.2, 2);
    \node[] at (7.0, 1.0) {$\Omega_1$};
    \draw[<->, dashed]  (6, 0.7) -- (8.2, 0.7);
    \draw[blue,pattern=north west lines, pattern color=blue!30] (7.8, 0) rectangle (10, 2);
    \node[] at (9.0, 1.0) {$\Omega_2$};
    \draw[<->, dashed]  (7.8, 1.3) -- (10, 1.3);
    \draw[<->]  (7.8, 2.15) -- (8.2, 2.15);
    \node[] at (8.0, 2.4) {$\delta$};
    \draw[thick]  (8.2, 0) -- (8.2, 2);
    \node[] at (8.43, 1.7) {$\Gamma_1$};
    \draw[thick]  (7.8, 0) -- (7.8, 2);
    \node[] at (7.55, 1.7) {$\Gamma_2$};
\end{tikzpicture}
\end{center}
\caption{Non-overlapping (left) and overlapping (right) splitting of the domain $\Omega$.}
\label{fig:domain}
\end{figure}

In each local subdomain $\Omega_k$ ($k=1,2$), we consider the local linear elliptic operator $L_k$
\begin{equation*}
L_ku = -\nabla \cdot (\nu_k \nabla u) + \gamma_k u \, ,
\end{equation*}
where
\begin{equation}\label{eq:localCoefficients}
\nu_k=\nu|_{\Omega_k} \quad \text{ and }\quad \gamma_k=\gamma|_{\Omega_k}.
\end{equation}

We can now introduce the multidomain problem:
find $u_1$ in $\Omega_1$ and $u_2$ in $\Omega_2$ such that
\begin{equation}\label{eq:problem_2dom_gamma}
\left\{\begin{array}{rcll}
L_ku_k &=& f_k & \mbox{in } \Omega_k, \; k=1,2,\\
u_k    &=& 0   & \mbox{on } \partial\Omega_k\setminus \Gamma_k, \; k=1,2,\\
u_1    &=& u_2 & \mbox{on } \Gamma_1 \cup  \Gamma_2,
\end{array}\right.
\end{equation}
where $f_k = f_{\vert\Omega_k} \in L^2(\Omega_k)$. Problem \eqref{eq:problem_2dom_gamma} has a unique solution with $u_k = u_{\vert\Omega_k}$, $u$ being the unique solution of \eqref{eq:globalProblem} (see, e.g. \cite{dgq_ell1}).

\begin{remark}
The local coefficients $\nu_k$ and $\gamma_k$ defined in \eqref{eq:localCoefficients} may be discontinuous in $\Omega_k$. However, the way they are defined is crucial to ensure the equivalence between \eqref{eq:globalProblem} and \eqref{eq:problem_2dom_gamma}. In particular, this equivalence would not hold if, to avoid internal discontinuities, $\nu_k$ and $\gamma_k$ were defined as continuous extensions in $\Omega_k \cap \widetilde{\Omega}_i$ of the coefficients $\widetilde{\nu}_j$ and $\widetilde{\gamma}_j$ ($j \not= i$).
\end{remark}

The multidomain problem (\ref{eq:problem_2dom_gamma}) can be reformulated as an \emph{optimal control problem} (see \cite{lions-controllo}) as follows.
\begin{enumerate}[noitemsep]
\item Introduce two unknown interface functions $\lambda_1$ on $\Gamma_1$ and $\lambda_2$ on $\Gamma_2$.
\item For $k=1,2$, define the \emph{state} problems
\begin{eqnarray}\label{eq:state_eq}
\left\{\begin{array}{rcll}
L_ku_k &=& f_k &\mbox{ in } \Omega_k, \\
u_k &=& \lambda_k &\mbox{ on } \Gamma_k, \\
u_k &=& 0 &\mbox{ on } \partial\Omega_k\setminus\Gamma_k
\end{array}\right.
\end{eqnarray}
where $u_k=u_k(\lambda_k)$ are the \textit{state} solutions.
\item Define the \textit{cost functional}
\begin{equation}\label{eq:JGamma}
J_\Gamma(\lambda_1,\lambda_2)=\frac{1}{2} \| u_1(\lambda_1) - u_2(\lambda_2) \|^2_{L^2(\Gamma_1\cup\Gamma_2)}.
\end{equation}
\item The interface functions $\lambda_1$ and $\lambda_2$ are the unique solution of the minimization problem
\begin{equation}\label{eq:infJGamma}
 \inf_{\mu_1,\mu_2}J_\Gamma(\mu_1,\mu_2).
\end{equation}
\end{enumerate}

Following \cite{Lions:1998:APS,glq}, the interface functions $\lambda_1$ and $\lambda_2$ are called \textit{virtual controls}, and, using the terminology of Optimal Control Theory, they are boundary-type controls or, more precisely, \emph{interface controls} that play the role of Dirichlet boundary data for the state problems (\ref{eq:state_eq}).

The approach involving steps 1--4 above was proposed and studied in \cite{dgq_ell1}, and it was given the name \emph{Interface Control Domain Decomposition (ICDD) method}. The cost functional \eqref{eq:JGamma} implements an \emph{interface observation} on the interfaces $\Gamma_1$ and $\Gamma_2$, and the minimization problem \eqref{eq:infJGamma} is the weak counterpart of the multidomain problem \eqref{eq:problem_2dom_gamma}. In \cite[Theorem 4.2]{dgq_ell1}, it was proved that the minimization problem \eqref{eq:infJGamma} admits a unique solution. 

\medskip

We now formulate the optimality system associated with problems \eqref{eq:state_eq}--\eqref{eq:infJGamma}. To this aim, for $k=1,2$, we first define the Hilbert spaces
\begin{eqnarray*}
V_k = \{ v \in H^1(\Omega_k) \, : \, v_k = 0 \mbox{ on } \partial\Omega_k
\setminus\Gamma_k \}, \quad
V_k^0 = \{ v \in H^1(\Omega_k) \, : \, v_k = 0 \mbox{ on } \partial\Omega_k \},
\end{eqnarray*} 
the trace spaces $\Lambda_k = H^{1/2}_{00}(\Gamma_k)$ of the functions defined on $\Gamma_k$ whose trivial prolongation to $\partial\Omega_k$ belongs to $H^{1/2}(\partial\Omega_k)$, and let $\boldsymbol{\Lambda} = \Lambda_1 \times \Lambda_2$.
The spaces $V_k$ and $\Lambda_k$ are endowed with the canonical norm of the space $H^1$ and $H^{1/2}$, respectively \cite{qv_ddm}.

In \cite[Sect. 4]{dgq_ell1}, we showed that the optimal control problem \eqref{eq:state_eq}--\eqref{eq:infJGamma} is not well-posed in $\boldsymbol{\Lambda}$, but in the space $\widehat{\boldsymbol{\Lambda}} = \widehat{\Lambda}_1 \times \widehat{\Lambda}_2$, which is the completion of the trace space $\boldsymbol{\Lambda}$ with respect to the norm induced by the cost functional $J_\Gamma$. This is due to the fact that the space $\boldsymbol{\Lambda}$ could not be complete with respect to the norm induced by the cost functional $J_\Gamma$, because the norm $\| u_1(\lambda_1) - u_2(\lambda_2) \|_{L^2(\Gamma_1 \cup \Gamma_2)}$ is not equivalent to the canonical norm $H_{00}^{1/2}(\Gamma_1) \times H_{00}^{1/2}(\Gamma_2)$ on $\boldsymbol{\Lambda}$. Thus, considering now the trace spaces $\widehat{\Lambda}_k$, we introduce two linear and continuous lifting operators from the interfaces to the subdomains:
\begin{equation}\label{eq:extension}
E_k:\widehat\Lambda_k\to V_k:\quad E_k\lambda_k=\lambda_k \mbox{ on }\Gamma_k
\qquad k=1,2,
\end{equation}
and, for all $u_k,v_k\in V_k$, we define the bilinear forms and the functionals
\begin{equation*}
\displaystyle a_k (u_k,v_k) =\int_{\Omega_k}\nu_k\nabla u_k \cdot \nabla v_k +\int_{\Omega_k}\gamma_k u_k v_k, \qquad
\displaystyle F_k(v_k) = \int_{\Omega_k} fv_k.
\end{equation*}

\smallskip

In \cite[Theorem 4.3]{dgq_ell1}, it was proved that (\ref{eq:state_eq})--(\ref{eq:infJGamma}) is equivalent to the optimality system: for $k=1,2$ and $\ell=3-k$, find $u_k^0\in V_k^0$, $p_k^0\in V_k^0$, and $\lambda_k\in\widehat{\Lambda}_k$ such that
\begin{eqnarray}\label{eq:os2_with_dual}
\left\{\begin{array}{ll}
a_k(u_k^0,v_k)+a_k(E_k\lambda_k,v_k)=F_k(v_k) 
 & \forall v_k \in V_k^0,\\[1mm]
u_k=u_k^0+E_k\lambda_k, & \\[1mm]
a_k(p_k^0,v_k)+a_k(E_k(u_k-u_{\ell}),v_k)=0 
 & \forall v_k \in V_k^0,\\[1mm]
p_k=p_k^0+E_k(u_k-u_{\ell}), & \\[1mm]
\lambda_k=u_\ell-p_\ell & \mbox{ on }\Gamma_k.
\end{array}\right.
\end{eqnarray}
The problems (\ref{eq:os2_with_dual})$_{3,4}$ are called the \emph{dual state problems}.

\smallskip

However, by using the same technique of the proof of Theorem 4.3 of \cite{dgq_ell1}, it is easy to prove that (\ref{eq:state_eq})--(\ref{eq:infJGamma}) is also equivalent to the following optimality system without the dual state problems: for $k=1,2$ find $u_k^0\in V_k^0$ and $\lambda_k\in \widehat{\Lambda}_k$ such that
\begin{eqnarray}\label{eq:os2}
\left\{\begin{array}{ll}
a_k(u_k^0,v_k)+a_k(E_k\lambda_k,v_k)=F_k(v_k) 
 & \forall v_k \in V_k^0,\\[1mm]
u_k=u_k^0+E_k\lambda_k, & \\[1mm]
\displaystyle 
\lambda_k=u_\ell & \mbox{ on }\Gamma_k.
\end{array}\right.
\end{eqnarray}

\medskip

While systems \eqref{eq:os2_with_dual} and \eqref{eq:os2} are equivalent at the continuous level, they give rise to two different numerical methods that we will denote \textit{dual ICCD method} and \textit{ICDD method}, respectively, in the rest of the paper. As we will show in Sect.~\ref{sec:numerical_results}, the dual ICDD method improves the stability of the ICDD method when the overlap width $\delta$ is very thin, and solving the dual state problems (\ref{eq:os2_with_dual})$_{3,4}$ can be interpreted as a preconditioning step. However, for the time being, we focus on the ICCD method (\ref{eq:os2}) and its analysis.

Since, the definition of the optimality systems (\ref{eq:os2_with_dual}) and (\ref{eq:os2}) involves the trace on $\Gamma_k$ of the solution $u_\ell$, which depends on $\lambda_\ell$, in the next Section, we estimate the $L^2-$norm of $u_\ell$ in terms of the $L^2-$norm of the trace $\lambda_\ell$. These estimates will be used in Sect. \ref{sec:convergence-analysis} to prove the convergence rate of ICDD.


\subsection{$L^2-$ estimates of the internal trace of the harmonic extension}\label{sec:estimatesOnTraces}

In this section, we consider an open bounded domain $\domain \subset \mathbb{R}^2$ with Lipschitz boundary $\partial\domain$, a function $\lambda$ defined on a subset $\Gamma \subset \partial\domain$, and the \emph{harmonic extension} $u^\lambda\in H^1_{0,\partial\domain\setminus\Gamma}(\domain)$ of $\lambda$, i.e., the solution of $-\nabla\cdot(\nu\nabla u^\lambda)+\gamma u^\lambda=0$ in $\domain$, with $u^\lambda=\lambda$ on $\Gamma$ and $u^\lambda=0$ on $\partial \domain\setminus \Gamma$. Then, we estimate the $L^2-$norm of the trace of $u^\lambda$ on a set $\gamma_t$ of co-dimension 1 internal to the domain $\domain$. A sample schematic representation of the setting that we consider is given in Fig.~\ref{fig:gammat}. (In the following, $\domain$ will play the role of either subdomain $\Omega_k$, $k=1,2$.)

\begin{figure}[bht]
\begin{center}
\includegraphics[width=0.4\textwidth]{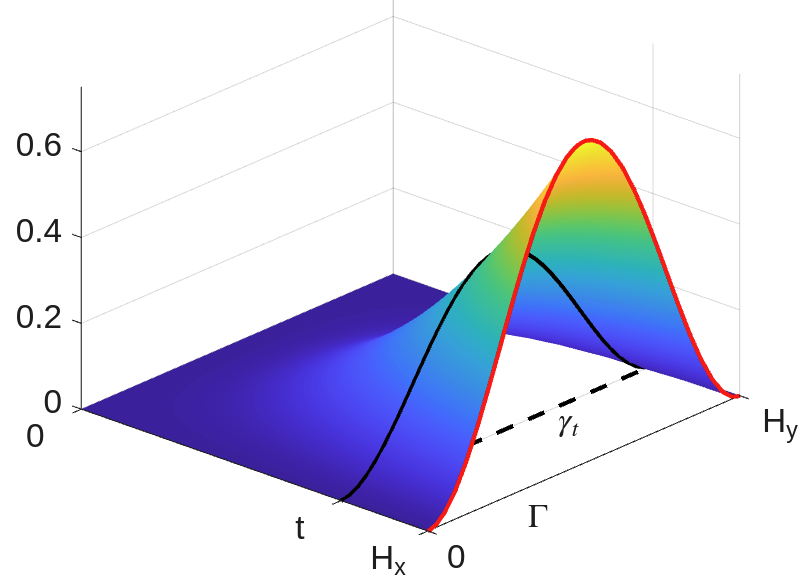}
\end{center}
\caption{The harmonic extension $u^\lambda$ of $\lambda$ (in red) in the domain $\domain = (0,H_x) \times (0,H_y)$. The set $\gamma_t$ and the trace of $u^\lambda$ on $\gamma_t$ are highlighted.}
\label{fig:gammat}
\end{figure}

We can prove the following result, which is a consequence of the more general Lemma 3.6 of \cite{loisel_szyld_nummath}. Indeed, although the latter deals with more general domains and traces on a 1-dimensional variety than we consider here, the novelty of our result is the very sharp expression of the function $\mathcal{C}_M(t)$ appearing in (\ref{eq:ls}) with respect to the size $H$ of the domain, the operator coefficients $\nu$ and $\gamma$, and the variable coordinate $t$, which is related to the overlap width.

\begin{theorem}\label{thm:ls}
Let $\domain=(0,H_x)\times(0,H_y)$ be a rectangle, $\gamma_t=\{(x,y)\in\overline{\domain}:\ x=t\}$ and $\Gamma=\{H_x\}\times(0,H_y)$ (see Fig. \ref{fig:gammat}).
Given $\lambda\in H^{1/2}_{00}(\Gamma)$, let $u^\lambda\in H^1(\domain)$ be the harmonic extension of $\lambda$ in $\domain$.
Then, there exists a positive and bounded function $\mathcal{C}_M:(0,H_x)\to(0,1)$ such that, for any $ t\in(0,H_x)$, it holds
\begin{equation}\label{eq:ls}
\|u^\lambda\|_{L^2(\gamma_t)}\leq \mathcal{C}_M(t) \|\lambda\|_{L^2(\Gamma)}.
\end{equation}

More precisely, if $H=diam(\domain)=\max\{H_x,H_y\}$ and $H_t=\max\{t,H_y\}$, then
\begin{equation}\label{eq:CLS_behaviour}
\mathcal{C}_M(t)=\left(1-\frac{1}{\frac{\|\nu\|_\infty}{\underline \nu} + \mathcal{C}(t)}\right)^{1/2}<1,
\end{equation}
and
\begin{equation}\label{eq:barCt}
\mathcal{C}(t) = \frac{c}{2} \, \frac{\|\nu\|_\infty}{c_1} \, \frac{\max\{H_t,H_t^{-1}\}}{H_x-t}\,,
\end{equation}
where the constant $c>0$ only depends on the shape of $\domain$, while $c_1>0$ is defined as 
\begin{equation}\label{eq:c1}
c_1 =
\left\{
\begin{array}{ll}
4\underline\nu/(4+2t H_x+(H_x-t)^2) & \text{if } \underline\gamma=0 ,\\
\min\{\underline\nu,\underline\gamma\} & \text{if } \underline\gamma \not= 0 ,
\end{array}
\right.
\end{equation}
with $\underline\gamma=\inf_{{\bf x}\in\Omega} \gamma({\bf x})$ and $\underline\nu=\inf_{{\bf x}\in\Omega} \nu({\bf x})$.

\end{theorem}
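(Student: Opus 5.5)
Following the strategy of Lemma 3.6 in \cite{loisel_szyld_nummath}, I would split $\domain$ along $\gamma_t$ and work on the right-hand part. The goal is to show that a definite fraction of $\|\lambda\|^2_{L^2(\Gamma)}$ cannot survive on $\gamma_t$. Set $\domain_t=(t,H_x)\times(0,H_y)$ and write $w=u^\lambda$.

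The first step is an energy identity. Test the equation $-\nabla\cdot(\nu\nabla w)+\gamma w=0$ against a weight linear in $x$. Concretely, multiply by $\varphi(x)\,\partial_x w$ with $\varphi(x)=x-t$ (a Rellich/Pohozaev-type multiplier), or alternatively by $\varphi w$, and integrate over $\domain_t$. Integrating by parts in $x$ turns the boundary terms on $\Gamma$ and $\gamma_t$ into $L^2$ traces of $w$, while the remaining volume terms are controlled by the energy $a(w,w)$ restricted to $\domain_t$. The aim is an inequality of the form
\begin{equation*}
\|w\|^2_{L^2(\gamma_t)}+\frac{H_x-t}{\|\nu\|_\infty}\,\alpha\, a_{\domain_t}(w,w)\;\le\;\frac{\|\nu\|_\infty}{\underline\nu}\,\|\lambda\|^2_{L^2(\Gamma)}
\end{equation*}
for a suitable weight $\alpha$. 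The ratio $\|\nu\|_\infty/\underline\nu$ enters because $\nu$ is discontinuous and must be bounded above and below separately.

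The second step bounds $\|\lambda\|^2_{L^2(\Gamma)}$ from above by the energy. Since $w$ vanishes on the top, bottom and left sides, a trace inequality on $\Gamma$ combined with a Poincaré/Friedrichs inequality on $\domain_t$ gives
\begin{equation*}
\|\lambda\|^2_{L^2(\Gamma)}\le \|w\|^2_{L^2(\gamma_t)}+c\,\max\{H_t,H_t^{-1}\}\,\frac{1}{c_1}\,a_{\domain_t}(w,w),
\end{equation*}
where $c_1$ is the coercivity constant of $a$ with respect to the full $H^1$ norm on $\domain_t$. For $\underline\gamma\neq0$ this constant is $\min\{\underline\nu,\underline\gamma\}$. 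For $\underline\gamma=0$ it is obtained from a one-dimensional Friedrichs inequality in $x$, which uses the zero Dirichlet condition along the $y$-direction and the length of $\domain_t$; this yields the explicit constant $4\underline\nu/(4+2tH_x+(H_x-t)^2)$. The factor $\max\{H_t,H_t^{-1}\}$ comes from the scaling of the trace constant.

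The third step combines the two inequalities by eliminating the energy term. Setting $A=\|w\|^2_{L^2(\gamma_t)}$, $B=\|\lambda\|^2$ and $E=a_{\domain_t}(w,w)$, the two inequalities read $A+\beta E\le\kappa B$ and $B\le A+\mathcal{C}'E$. From these, a convex combination gives $A\le\bigl(1-\tfrac{1}{\kappa+\mathcal{C}}\bigr)B$, with $\mathcal{C}$ proportional to $\|\nu\|_\infty\max\{H_t,H_t^{-1}\}/(c_1(H_x-t))$. Taking square roots gives \eqref{eq:CLS_behaviour}, and $\mathcal{C}_M(t)<1$ because $\mathcal{C}(t)$ is finite for $t<H_x$.

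I expect the main obstacle to be the first step. It requires choosing the multiplier and normalization so that the constant in front of $B$ is exactly $\|\nu\|_\infty/\underline\nu$ and so that the sign of the boundary terms on $\gamma_t$ is favorable despite discontinuous $\nu$. I would first try the multiplier $(x-t)\partial_x w$ with smooth $\nu$ and then pass to the limit by density. Handling jumps of $\nu$ across $x=\mathrm{const}$ lines needs care, and if that fails I would fall back on a Fourier sine expansion in $y$ for piecewise-constant $\nu$.
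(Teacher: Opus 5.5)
Your combination step fails for a structural reason, not because of constants. The inequalities $A+\beta E\le\kappa B$ and $B\le A+\mathcal{C}'E$ are both satisfied by $A=B$ together with any $E$ such that $\beta E\le(\kappa-1)B$ (take $E=0$ if $\kappa=1$). So no combination of them can give $A\le(1-\epsilon)B$. The second inequality is a \emph{lower} bound on $A$. A contraction instead needs $B$ to be bounded above by some energy that is itself bounded above by $B-A$.

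Your first step is also false as stated. Take $\nu\equiv1$, $\gamma\equiv0$ and $\lambda=\sin(k\pi y/H_y)$. The unweighted energy of $u^\lambda$ on $(t,H_x)\times(0,H_y)$ behaves like $(k\pi/H_y)\|\lambda\|^2_{L^2(\Gamma)}$ as $k\to\infty$; it is an $H^{1/2}(\Gamma)$ quantity. Hence no bound $\beta E\le\kappa\|\lambda\|^2_{L^2(\Gamma)}$ with fixed $\beta>0$ can hold. Consistently with this, the multipliers $(x-t)\partial_x w$ and $(x-t)w$ produce boundary terms on $\Gamma$ involving derivatives of $u^\lambda$, which $\|\lambda\|_{L^2(\Gamma)}$ does not control.

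The paper's argument proceeds differently. Write $B-A=\int_0^{H_y}\int_t^{H_x}2u^\lambda\,\partial_xu^\lambda$ by the fundamental theorem of calculus. Then apply Green's formula on the \emph{left} rectangles $(0,s)\times(0,H_y)$, where $u^\lambda$ vanishes on three sides. There $\int_{\gamma_s}\nu u^\lambda\partial_xu^\lambda$ equals the nonnegative energy of $u^\lambda$ on $(0,s)\times(0,H_y)$, and no derivative on $\Gamma$ ever appears.

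Integrating over $s\in(t,H_x)$ gives $\frac{2}{\|\nu\|_\infty}N\le B-A\le\frac{2}{\underline\nu}N$. Here $N=\int_\domain\mu_t\,(\nu|\nabla u^\lambda|^2+\gamma(u^\lambda)^2)$, with $\mu_t=H_x-t$ for $x<t$ and $\mu_t=H_x-x$ for $x\ge t$. This step is valid when $\nu$ is constant along each $\gamma_s$, as for the vertical-interface jumps considered in the paper. The weight vanishing on $\Gamma$ is exactly what removes the $H^{1/2}$ obstruction above. Also, $\|\nu\|_\infty/\underline\nu$ arises as the ratio of these two comparability constants, not as a coefficient in front of $B$.

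The bound you are missing is an upper bound for $A$. It comes from the trace inequality on the left rectangle $(0,t)\times(0,H_y)$, whose diameter is $H_t=\max\{t,H_y\}$ and on which $\mu_t\equiv H_x-t$. This gives $A\le\frac{c\max\{H_t,H_t^{-1}\}}{c_1(H_x-t)}N$. Here $c_1$ is the equivalence constant between $N$ and the $\mu_t$-weighted $H^1$ norm on all of $\domain$. For $\underline\gamma=0$ it comes from a weighted Poincar\'e inequality using $u^\lambda=0$ at $x=0$, not from a Friedrichs inequality on your right strip.

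Combining, $B\le\bigl(\frac{2}{\underline\nu}+\frac{c\max\{H_t,H_t^{-1}\}}{c_1(H_x-t)}\bigr)N$. Inserting this into $A\le B-\frac{2}{\|\nu\|_\infty}N$ yields exactly \eqref{eq:CLS_behaviour}. Your right-strip choice of $\domain_t$ could not reproduce the stated dependence on $H_t$ and $c_1$ in any case.
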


\begin{proof}
See Appendix \ref{appendix:proof}.
\end{proof}

\medskip

We now introduce the following Assumption.

\begin{assumption}\label{ass_decomposition}
For $k=1,2$, let $\Omega_k \subset \mathbb{R}^2$ be two rectangles of the same size $H_x \times H_y$ and $H = \max \{ H_x, H_y\}$ be their diameter. The rectangles are overlapped as shown in Fig. \ref{fig:domain} (right) such that the interfaces $\Gamma_1$ and $\Gamma_2$ are vertical parallel segments. The distance $\delta = \text{dist}(\Gamma_1, \Gamma_2)>0$ is the width of the overlap and we assume that $\delta \ll H_x$.
\end{assumption}

\begin{corollary}\label{cor:gamma_gamma}
Let Assumptions \ref{ass_decomposition} hold. For $k=1,2$, let $\lambda_k\in \Lambda_k$ and let $u_k^{\lambda_k}\in V_k$ be its harmonic extension in $\Omega_k$.
Then, there exists a positive constant $c_{L,k}>0$ that only depends on the physical quantities that characterize the local differential operator $L_k$, and there exists a positive constant $c_{H,k}>0$ that only depends on the geometrical properties of the local domain $\Omega_k$ such that, for $k=1,2$ and $\ell=3-k$,
\begin{equation}\label{eq:stimal2_gg}
\|u_k^{\lambda_k}\|_{L^2(\Gamma_\ell)}\leq
C_{\delta,k} \|\lambda_k\|_{L^2(\Gamma_k)}
\quad
\text{with} \quad
C_{\delta,k}=\left(\frac{\delta\left(\frac{\|\nu_k\|_\infty}{\underline\nu_k}-1\right)+c_{L,k} \, c_{H,k}}{\delta\frac{\|\nu_k\|_\infty}{\underline\nu_k}+c_{L,k} \, c_{H,k} } \right)^{1/2},
\end{equation}
and where both constants $c_{L,k}$ and $c_{H,k}$ are independent of $\delta$.
\end{corollary}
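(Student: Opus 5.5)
The plan is to apply Theorem \ref{thm:ls} to each subdomain $\Omega_k$ in a suitable coordinate system, and then reduce the resulting constant to the form in \eqref{eq:stimal2_gg}.

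\emph{Setting up the theorem.} Fix $k$ and $\ell=3-k$. Under Assumption \ref{ass_decomposition}, $\Omega_k$ is a rectangle of size $H_x\times H_y$ whose interface $\Gamma_k$ is a vertical side. After a rigid motion (a reflection $x\mapsto H_x-x$ for $k=2$), we may take $\Omega_k=(0,H_x)\times(0,H_y)$ with $\Gamma_k=\{H_x\}\times(0,H_y)$. The other interface $\Gamma_\ell$ is then the internal vertical segment $\gamma_t$ with $t=H_x-\delta$, since $\mathrm{dist}(\Gamma_1,\Gamma_2)=\delta$. The coefficients $\nu_k,\gamma_k$ are those of $L_k$. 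Theorem \ref{thm:ls} then yields \eqref{eq:stimal2_gg} with $C_{\delta,k}=\mathcal{C}_M(H_x-\delta)$.

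\emph{Rewriting the constant.} Set $r=\|\nu_k\|_\infty/\underline\nu_k$. From \eqref{eq:barCt} with $H_x-t=\delta$, we get $\mathcal{C}(t)=A/\delta$, where
\[
A=\frac{c}{2}\,\frac{\|\nu_k\|_\infty}{c_1}\,\max\{H_t,H_t^{-1}\}.
\]
Substituting into \eqref{eq:CLS_behaviour} gives
\[
\mathcal{C}_M^2=1-\frac{1}{r+A/\delta}=\frac{\delta r+A-\delta}{\delta r+A}=\frac{\delta(r-1)+A}{\delta r+A}.
\]
This is exactly the claimed expression once we identify $A=c_{L,k}c_{H,k}$.

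\emph{Factoring $A$.} This is the step needing care, because $c_1$ and $H_t$ depend on $t=H_x-\delta$.

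If $\underline\gamma_k\neq0$, then $c_1=\min\{\underline\nu_k,\underline\gamma_k\}$, so we take $c_{L,k}=\|\nu_k\|_\infty/\min\{\underline\nu_k,\underline\gamma_k\}$.

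If $\underline\gamma_k=0$, then $c_1=4\underline\nu_k/(4+2tH_x+\delta^2)$, which depends on $\delta$. Here we use $\delta\ll H_x$ to bound $4+2tH_x+\delta^2\le 4+3H_x^2$, since $t<H_x$ and $\delta\le H_x$. This replaces $c_1$ by the $\delta$-independent lower bound $4\underline\nu_k/(4+3H_x^2)$. We then take $c_{L,k}=\|\nu_k\|_\infty/\underline\nu_k$ and move the factor $(4+3H_x^2)/4$ into $c_{H,k}$.

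Similarly, $H_t=\max\{t,H_y\}$ lies in $[H_y,H]$, so $\max\{H_t,H_t^{-1}\}\le\max\{H,H_y^{-1}\}$, which is independent of $\delta$. Together with the shape constant $c/2$, this gives
\[
c_{H,k}=\frac{c}{2}\,\max\{H,H_y^{-1}\}\,\frac{4+3H_x^2}{4}
\]
(the last factor only in the case $\underline\gamma_k=0$).

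\emph{Monotonicity step.} Replacing $A$ by these upper bounds is legitimate because $A\mapsto\frac{\delta(r-1)+A}{\delta r+A}=1-\frac{\delta}{\delta r+A}$ is increasing in $A$. Hence the bound remains valid, with constants that are independent of $\delta$ and split into an operator part $c_{L,k}$ and a geometric part $c_{H,k}$. The main obstacle is precisely this bookkeeping of the $t$-dependence in $c_1$ and $H_t$, and the monotonicity argument handles it.
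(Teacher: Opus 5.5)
Your proof is correct and follows the same route as the paper. You apply Theorem~\ref{thm:ls} on $\Omega_k$ with $t=H_x-\delta$, rewrite $\mathcal{C}_M^2=1-\delta/(\delta r+A)$, and identify $A$ with $c_{L,k}c_{H,k}$.

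The only real difference is how the $t$-dependence of $c_1$ and $H_t$ is removed. The paper sets $H_t\approx H$ and $\mathcal{C}(t)\approx C_k(t)$, using $\delta\ll H_x$, so its $C_{\delta,k}$ holds only up to that approximation. In fact, when $H<1$ and $H_x>H_y$, replacing $\max\{H_t,H_t^{-1}\}$ by $\max\{H,H^{-1}\}$ decreases $A$, which goes in the wrong direction. You instead bound $A$ from above by $\delta$-independent quantities and use that $A\mapsto 1-\delta/(\delta r+A)$ is increasing. This gives a genuine inequality rather than an approximate one.

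The price is slightly more conservative constants: $\max\{H,H_y^{-1}\}$ in place of $\max\{H,H^{-1}\}$, and $4+3H_x^2$ in place of $4+2H^2$. You can recover the paper's factor exactly, since $2tH_x+\delta^2=H_x^2+t^2\le 2H_x^2$. How you split numerical factors between $c_{L,k}$ and $c_{H,k}$ differs from the paper, but this is immaterial because only the product enters $C_{\delta,k}$.
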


\begin{proof}
The thesis follows from Theorem \ref{thm:ls}. More precisely, for $k=1,2$, let $\domain = \Omega_k$ and $\delta=\text{dist}(\Gamma_1,\Gamma_2)=H_x-t$ in \eqref{eq:barCt}. Due to Assumption \ref{ass_decomposition}, $\delta \ll H_x$, so $H_t = \max \{ t, H_y \} = \max\{ H_x - \delta, H_y \} \approx \max \{ H_x, H_y \} = H$. Therefore, we can approximate $\mathcal{C}(t)$ with $C_k(t)$ where
\begin{equation*}
C_k(t) = \frac{c}{2} \, \frac{\|\nu_k\|_\infty}{c_1} \, \frac{\max\{H,H^{-1}\}}{\delta}\,.
\end{equation*}
Finally, considering \eqref{eq:c1}, we can define the constants $c_{L,k}$ and $c_{H,k}$ as follows:
\begin{equation*}
c_{L,k} = \left\{
\begin{array}{ll}
\|\nu_k\|_\infty \, (8 \underline{\nu}_k)^{-1} & \text{if } \underline{\gamma}_k = 0, \\
\|\nu_k\|_\infty \, (2\min \{ \underline{\nu}_k, \underline{\gamma}_k \})^{-1} & \text{if } \underline{\gamma}_k \not= 0 ,
\end{array}
\right.
\end{equation*}
and
\begin{equation*}
c_{H,k} = \left\{
\begin{array}{ll}
c\, \max\{H, H^{-1}\} \, (4+2H^2)
& \text{if } \underline{\gamma}_k = 0, \\
c\, \max\{H, H^{-1}\} & \text{if } \underline{\gamma}_k \not= 0,
\end{array}
\right.
\end{equation*}
so that we can express $C_k(t) = c_{L,k} \, c_{H,k} \, \delta^{-1}$. The thesis now follows from \eqref{eq:ls} and \eqref{eq:CLS_behaviour} by the approximation $\mathcal{C}(t) \approx C_k(t)$. 

\end{proof}

\subsection{Discretization and algebraic form}\label{sec:discrete}

With the aim of handling the two subproblems independently of each other, we now introduce independent $hp$-FEM discretizations in each subdomain. This approach is different from the standard one used in the overlapping domain decomposition literature, where a discretization of the global domain $\Omega$ induces those in the local subdomains $\Omega_1$ and $\Omega_2$. Here, the meshes are constructed at the local level, thus leading to two independent set of nodes in $\Omega_1$ and $\Omega_2$ so that, by construction, duplication of nodes occurs inside the overlapping region $\Omega_1 \cap \Omega_2$. This approach leads to completely non-intrusive algorithms that permit to treat each subproblem in an independent way. 

More precisely, in general, we consider two \emph{independent families of triangulations} ${\cal T}_{1,h_1}$  in $\Omega_1$ and ${\cal T}_{2,h_2}$ in $\Omega_2$, so that the local meshes in $\Omega_1$ and  in $\Omega_2$ can be non-conforming on the overlap $\Omega_{1}\cap\Omega_2$, they can be characterized by different mesh sizes $h_1$ and $h_2$, and different polynomial degrees $p_1$ and $p_2$ can be used to define the local finite element spaces. In the rest of the paper, we consider the following Assumption \ref{ass_mesh}.

\begin{assumption}\label{ass_mesh}
    Inside each subdomain $\Omega_k$, if $T_m$ are simplices, we assume that the families of triangulations ${\cal T}_{k,h_k}$ are affine, regular \cite[(3.4.18)]{qvb} and quasi--uniform \cite[(3.5.19)]{qvb}. The interpolation nodes on the edges of each element $T_m$ are chosen as the Legendre-Gauss-Lobatto (LGL) nodes \cite{chqz06}, while the interior points (for $p_k\geq 3$) are the so-called electrostatic points, see \cite[Sect. 2.9.2]{chqz06} and \cite{Hesthaven_1998}.
    
    Instead, if $T_m$ are quads, we require that any $T_m$ is the image of the reference quad $\hat T=[-1,1]^d$ through a $C^1$ diffeomorfism ${\bf F}_{T_m}$ (see, e.g. \cite{qvb}).
    In this case, the interpolation nodes are the tensor product of the LGL nodes, as standard for Spectral Element Methods (SEM).
\end{assumption}

Choosing the nodes as indicated in Assumption \ref{ass_mesh} yields greater stability, since the Lebesgue interpolation constant remains smaller than with equally spaced nodes \cite{Hesthaven_1998}. Furthermore, in the case of quadrilaterals, since the interpolation nodes in each element are exactly the nodes of the LGL quadrature formulas, we can exploit these highly accurate quadrature formulas to assemble both stiffness and mass matrices at a very low computational cost compared with exact integration. Assumption \ref{ass_mesh} will also be needed to establish properties of the mass matrices as in Lemma \ref{lemma:eig_mass}.

\smallskip

The local finite element approximation spaces are (for $k=1,2$):
\begin{equation*}
  X_{k,h_k} = \{ v \in C^0(\overline{\Omega}_k) \, : \, v_{|T_m}
\in {\cal Q}_{p_k}, \: \forall T_m \in {\cal T}_{k,h_k} \},
\qquad
V_{k,h_k}=X_{k,h_k}\cap V_k,  \qquad V_{k,h_k}^0=X_{k,h_k}\cap V_k^0
\end{equation*}
(where ${\cal Q}_{p_k}={\mathbb P}_{p_k}$ if the $T_m$ are simplices and ${\cal Q}_{p_k}={\mathbb Q}_{p_k}\circ {\bf F}^{-1}_{T_m}$ if the $T_m$ are quads), while the spaces of the discrete traces on $\Gamma_1$ and $\Gamma_2$ are
\begin{equation*}
\Lambda_{k,h_k}=\{\lambda=v|_{\Gamma_k}, \ v\in V_{k,h_k}\}, \qquad 
\boldsymbol\Lambda_h=\Lambda_{1,h_1}\times \Lambda_{2,h_2}.
\end{equation*}
Let us remark that at the discrete level, due to the equivalence of all norms, we will be able to work with a standard finite element discretization of the trace space $\boldsymbol{\Lambda}$, without introducing any special treatment for $\widehat{\boldsymbol{\Lambda}}$.

\smallskip

We set $\overline N_k=\textrm{dim}(X_{k,h_k})$,
$N_k=\textrm{dim}(V_{k,h_k})$, $N^0_k=\textrm{dim}(V_{k,h_k}^0)$, and
$n_k=\textrm{dim}(\Lambda_{k,h_k})$. 

We consider (nodal) Lagrange basis functions in both the spaces $V_{k,h_k}$ and $\Lambda_{k,h_k}$.
Let $\{\mathbf x_i^{(k)}\}$ be the nodes of the mesh ${\cal T}_{k,h_k}$, then we define the following sets of indices:
\begin{eqnarray*}
\begin{array}{lll}
{\cal I}_{\overline \Omega_k}=\{1,\ldots, \overline N_k\},&
{\cal I}_{\Omega_k}=\{i\in{\cal I}_{\overline \Omega_k}:\ {\bf x}_i\in
(\Omega_k\setminus\partial\Omega)\},&
{\cal I}_{\Gamma_k}=\{i\in{\cal I}_{\overline \Omega_k}:\ {\bf x}_i\in
(\Gamma_k\setminus\partial\Omega)\}.
\end{array}
\end{eqnarray*}

For $i\in {\cal I}_{\overline \Omega_k}$, let $\{\varphi_i^{(k)}\}$ denote the Lagrange basis functions in $X_{k,h_k}$, while for $i\in {\cal I}_{\Gamma_k}$ let $\{\mu_i^{(k)}\}$ denote the Lagrange basis functions in $\Lambda_{k,h_k}$.

The discrete counterparts $E_{k,h_k}$ of the extension operators $E_k$ (\ref{eq:extension}) are defined as usual in finite element context: if $\mu_i^{(k)}$ is any Lagrange basis function of $\Lambda_{k,h_k}$, then $E_{k,h_k}\mu_i^{(k)}$ is the finite element interpolant that extends $\mu_i^{(k)}$ to $X_{k,h_k}$. It follows that there exists a unique $j=j(i)\in {\cal I}_{\Omega_k}$ such that $E_{k,h_k}\mu_i^{(k)}=\varphi_j^{(k)}$, that is, $\mu_i^{(k)}$ is the trace on $\Gamma_k$ of $\varphi_{j(i)}^{(k)}$.

The discrete counterpart of (\ref{eq:os2}) reads:
for $k=1,2$, find $u_{k,h_k}^0\in V_{k,h_k}^0$ and
$\lambda_{k,h_k}\in           
{\Lambda}_{k,h_k}$ such that
\begin{eqnarray}\label{eq:os2h}
\left\{\begin{array}{l}
a_k(u_{k,h_k}^0,v_{k,h_k})+a_k(E_{k,h_k}\lambda_{k,h_k},v_{k,h_k})=F_k(v_{k,h_k}) 
 \quad \forall v_{k,h_k} \in V_{k,h_k}^0,\\[1mm]
u_{k,h_k}=u_{k,h_k}^0+E_{k,h_k}\lambda_{k,h_k}, \\[1mm]
\lambda_{k,h_k}=\mathscr I_{\Gamma_k} ({u_{\ell,h_\ell}}_{|\Gamma_k}) \quad \mbox{on }\Gamma_k,
\end{array}\right.
\end{eqnarray}
where ${\mathscr I}_{\Gamma_k}: C^0(\Gamma_k)\to \Lambda_{k,h_k}$ is the composite Lagrange interpolation operator of local degree $p_k$ at the nodes
$\{\mathbf x_i^{(k)}\}_{i\in \mathcal I_{\Gamma_k}}$, i.e.,
\begin{equation}\label{eq:Lagrange_interpol}
    ({\mathscr I}_{\Gamma_k}v)(\mathbf x) = 
    \sum_{i\in{\mathcal I}_{\Gamma_k}}  v(\textbf{x}_i^{(k)}) \mu_i^{(k)}(\mathbf x) \qquad \forall \mathbf x\in \Gamma_k
\end{equation}
such that
\begin{equation}\label{eq:interpolation_conditions}
    ({\mathscr I}_{\Gamma_k}v)(\mathbf x_i^{(k)}) = v(\textbf{x}_i^{(k)})
    \qquad \forall i \in \mathcal I_{\Gamma_k}.
    \end{equation}

The solutions $u_{\ell,h_\ell}$ of (\ref{eq:os2h}) (for $\ell=1,2$) can be written as the sum of two contributions, one related to internal dofs $\uv_\ell^0$, and the other to interface dofs $\lv_\ell$:
\begin{equation}\label{eq:uh_expansion}
u_{\ell,h_\ell}(\textbf{x})=\sum_{j\in{\mathcal I}_{\Omega_\ell}}(\uv_\ell^0)_j\varphi^{(\ell)}_j(\textbf{x})
+ \sum_{j\in{\mathcal I}_{\Gamma_\ell}}(\lv_\ell)_j \varphi_j^{(\ell)}(\textbf{x}).
\end{equation}
By the linearity of the interpolation operator $\mathscr I_{\Gamma_k}$ and the interpolation conditions (\ref{eq:interpolation_conditions}), 
we have, for $k\neq \ell\in\{1,2\}$ and for any $ i\in {\cal I}_{\Gamma_k}$:
\begin{eqnarray} 
[\mathscr I_{\Gamma_k} (u_{\ell,h_\ell}{}_{|_{\Gamma_k}})](\textbf{x}_i^{(k)})&=&\sum_{j\in{\mathcal I}_{\Omega_\ell}}(\uv_\ell^0)_j[\mathscr I_{\Gamma_k} (\varphi_j^{(\ell)}{}_{|_{\Gamma_k}})](\textbf{x}_i^{(k)})
+ \sum_{j\in{\mathcal I}_{\Gamma_\ell}}(\lv_\ell)_j [\mathscr I_{\Gamma_k} (\varphi_j^{(\ell)}{}_{|_{\Gamma_k}})](\textbf{x}_i^{(k)}) \nonumber \\[1mm]
&=&\sum_{j\in{\mathcal I}_{\Omega_\ell}}(\uv_\ell^0)_j \, \varphi^{(\ell)}_j(\textbf{x}_i^{(k)})
 + \sum_{j\in{\mathcal I}_{\Gamma_\ell}}(\lv_\ell)_j \varphi_j^{(\ell)}(\textbf{x}_i^{(k)}).
 \label{eq:nc_interpolation}
\end{eqnarray}

Then, we introduce the following arrays:
\begin{eqnarray}\label{eq:matrici}
\begin{array}{ll}
\uv_k^0=[u_{k,h_k}^0({\bf x}_i^{(k)})]_i, \quad
\wv_k^0=[w_{k,h_k}^0({\bf x}_i^{(k)})]_i, & i\in {\cal I}_{\Omega_k}\\[1mm]
\lv_k=[\lambda_{k,h_k}({\bf x}_i^{(k)})]_i, & i\in  {\cal
I}_{\Gamma_k}\\[1mm]
{\mathsf A}_{kk}:\ ({\mathsf A}_{kk})_{i,j}=a_k(\varphi_j^{(k)},
\varphi_i^{(k)}), & i,j\in {\cal I}_{\Omega_k}\\[1mm]
{\mathsf A}_{k\Gamma_k}:\ ({\mathsf A}_{k\Gamma_k})_{i,j}=a_k(\varphi_j^{(k)},
\varphi_i^{(k)}), & i\in {\cal I}_{\Omega_k},\ j\in {\cal I}_{\Gamma_k}\\[1mm]
\displaystyle
{\mathsf T}_{k\ell}^0:\ ({\mathsf T}_{k\ell}^0)_{i,j}=
\varphi_j^{(\ell)}({\bf x}_i^{(k)}),& i\in {\cal I}_{\Gamma_k},\ j\in {\cal I}_{\Omega_\ell} ,\ k\neq \ell \\[1mm]
\displaystyle
{\mathsf T}_{k\ell}^\Gamma:\ ({\mathsf
T}_{k\ell}^\Gamma)_{i,j}=
\varphi_j^{(\ell)}({\bf x}_i^{(k)}),& i\in {\cal I}_{\Gamma_k},\ \ j\in {\cal I}_{\Gamma_\ell},\ k\neq \ell\\[1mm]
\fv_k=F_k(\varphi_i^{(k)}),  & i\in {\cal I}_{\Omega_k}.\\[1mm]
\end{array}
\end{eqnarray}
Here, ${\mathsf T}_{k\ell}^0$ and ${\mathsf T}_{k\ell}^\Gamma$ are \textit{intergrid matrices}, and
we notice that ${\mathsf T}_{kk}^0={\mathsf 0}$ for $k=1,2$ since $(\varphi^{(k)}_j)|_{\Gamma_k}=0$ for any $j\in{\cal I}_{\Omega_k}$, while ${\mathsf T}_{kk}^\Gamma$ is the 
identity matrix of size $n_k$.

Using these notations, the algebraic counterpart of \eqref{eq:nc_interpolation} (that is the array of nodal values of $\uv_\ell$ at the nodes of $\Gamma_k$) can be written as
\begin{equation}\label{eq:alg_nc_interpolation}
    (\uv_\ell)|_{\Gamma_k}=\mathsf T_{kl}^0\uv_\ell^0+\mathsf T_{kl}^{\Gamma}\boldsymbol\lambda_\ell,
\end{equation}
and the matrix form of the discrete optimality system (\ref{eq:os2h}) reads:
\begin{eqnarray}\label{eq:os2h_mat}
\arrayrulecolor{violachiaro}
\renewcommand{\arraystretch}{1.2}
\left[\begin{array}{cc|cc}
{\mathsf A}_{11} & {\mathsf 0} & {\mathsf A}_{1\Gamma_1} &{\mathsf 0} \\
{\mathsf 0} & {\mathsf A}_{22} & {\mathsf 0} &  {\mathsf A}_{2\Gamma_2}\\
\hline
{\mathsf 0} &  -{\mathsf T}_{12}^0 & 
{\mathsf I}_{\Gamma_1} 
& - {\mathsf T}_{12}^\Gamma\\
- {\mathsf T}_{21}^0 & {\mathsf 0} & -{\mathsf T}_{21}^\Gamma & 
\mathsf I_{\Gamma_2} 
\\
\end{array}
\right]
\left[\begin{array}{c}
\uv_1^0\\
\uv_2^0\\
\hline
\lv_1\\
\lv_2
\end{array}
\right]
=
\left[\begin{array}{c}
\fv_1\\
\fv_2\\
\hline
\zerov \\
\zerov \\
\end{array}
\right].
\end{eqnarray}
Setting
\begin{eqnarray}\label{eq:mat_blocchi}
\begin{array}{l}
{\mathsf A}_{II}=\left[\begin{array}{cc}
{\mathsf A}_{11} &{\mathsf 0} \\
 {\mathsf 0} & {\mathsf A}_{22}\\
\end{array} \right],\quad 
{\mathsf A}_{I\Gamma}=\left[\begin{array}{cc}
{\mathsf A}_{1\Gamma_1} &{\mathsf 0} \\
 {\mathsf 0} & {\mathsf A}_{2\Gamma_2}\\
\end{array} \right],\\[4mm]
{\mathsf T}^0=\left[\begin{array}{cc}
{\mathsf 0} & {\mathsf T}_{12}^0\\
{\mathsf T}_{21}^0 & {\mathsf 0}
\end{array} \right], \quad
{\mathsf T}^\Gamma=\left[\begin{array}{cc}
{\mathsf 0} & {\mathsf T}_{12}^\Gamma\\
{\mathsf T}_{21}^\Gamma & {\mathsf 0}
\end{array} \right], \\[4mm]

\uv^0=\left[\begin{array}{l}
\uv_1^0 \\
\uv_2^0
\end{array} \right], \quad 
\lbv=\left[\begin{array}{l}
\lv_1 \\
\lv_2\\
\end{array} \right], \quad
\fbv=\left[\begin{array}{l}
\fv_1 \\
\fv_2
\end{array} \right],
\end{array}
\end{eqnarray}
we obtain the compact form
\begin{eqnarray}\label{eq:os2h_matCompact}
\left[\begin{array}{cc}
{\mathsf A}_{II} & {\mathsf A}_{I\Gamma} \\
-{\mathsf T}^0 & {\mathsf I}-{\mathsf T}^\Gamma
\end{array}\right]
\left[\begin{array}{c}
\uv^0\\
\lbv\end{array}\right]
=
\left[\begin{array}{c}
\fbv\\
\zerov
\end{array}
\right],
\end{eqnarray}
where ${\mathsf I}$ denotes the identity matrix of size $n_1+n_2$.
If we eliminate the unknown $\uv^0$, we obtain the Schur complement system of (\ref{eq:os2h_matCompact}) with respect to the control (or interface) variables:
\begin{equation}\label{eq:schur_2}
{\mathsf \Sigma} \lbv=\chibv,
\end{equation}
where
\begin{equation}\label{eq:schur_2_mat_rhs}
{\mathsf \Sigma}=\mathsf I- {\mathsf T}^\Gamma+{\mathsf T}^0{\mathsf A}^{-1}_{II}
{\mathsf A}_{I\Gamma}
\quad\mbox{and} \quad
\chibv= {\mathsf T}^0{\mathsf A}_{II}^{-1}{\fbv}
\end{equation}
are the Schur complement matrix and the corresponding right--hand side associated with the minimization of the cost functional $J_\Gamma$ \eqref{eq:JGamma}.

\begin{remark}
The matrix $\mathsf A_{II}$ is block-diagonal and $\mathsf A_{11}$ and $\mathsf A_{22}$ do not share any entries. This is because the degrees of freedom corresponding to the two subdomains belong to two distinct sets, even when the two meshes match on the overlap. As explained at the beginning of Sect. \ref{sec:discrete}, this is in contrast with what is usually done in classical overlapping Schwarz methods, where the dofs on the overlap are only counted once. 
\end{remark}

In the case the two meshes $\mathcal{T}_{1,h_1}$ and $\mathcal{T}_{2,h_2}$ are conforming on $\overline{\Omega_1 \cap \Omega_2}$ (i.e., the two triangulations and the polynomial degrees coincide on the overlap), the interface equations \eqref{eq:os2h}$_3$ become
\begin{equation}\label{eq:interfaceConforming}
\lambda_{k,h_k} = u_{\ell,h_\ell \vert \Gamma_k}\,,
\quad k,\ell = 1,2, \; k \not= \ell,
\end{equation}
where, at the right-hand side, we have the restriction of $u_{\ell,h_\ell}$ to the nodes on $\Gamma_k$. The interpolation matrices $\mathsf{T}^0_{k\ell}$ ($k \not= \ell$) (see \eqref{eq:matrici}) become indeed restriction matrices from ${\cal I}_{\Omega_\ell}$ to ${\cal I}_{\Gamma_k}$, i.e.
\begin{eqnarray}\label{eq:T0_conforming}
({\mathsf T}_{k\ell}^0)_{ij}=({\mathsf R}_{\Gamma_k\ell})_{ij}=\left\{
\begin{array}{ll}
1 & \mbox{if }{\bf x}_j^{(\ell)}\in\Gamma_k\\
0 & \mbox{otherwise.}
\end{array}\right.
\end{eqnarray}
Moreover, since we have assumed that $\text{dist}(\Gamma_1,\Gamma_2)>0$, the interfaces $\Gamma_1$ and $\Gamma_2$ do not intersect and there holds
\begin{eqnarray}\label{eq:Tgamma_conforming}
\mathsf T^{\Gamma}=\mathsf 0.
\end{eqnarray}
In this case, system (\ref{eq:os2h_mat}) reduces to
\begin{equation}\label{eq:os2_alg_matrix_conf}
\arrayrulecolor{violachiaro}
\renewcommand{\arraystretch}{1.2}
\left[\begin{array}{cc|cc}
{\mathsf A}_{11} & {\mathsf 0} & {\mathsf A}_{1\Gamma_1} &{\mathsf 0} \\
{\mathsf 0} & {\mathsf A}_{22} & {\mathsf 0} &  {\mathsf A}_{2\Gamma_2}\\
\hline
{\mathsf 0} &  -{\mathsf R}_{\Gamma_12} & 
{\mathsf I}_{\Gamma_1} &  {\mathsf 0}\\
- {\mathsf R}_{\Gamma_21} & {\mathsf 0} & {\mathsf 0} & 
{\mathsf I}_{\Gamma_2}
\end{array}
\right]
\left[\begin{array}{c}
\uv_1^0\\
\uv_2^0\\
\hline
\lv_1\\
\lv_2
\end{array}
\right]
=
\left[\begin{array}{c}
\fv_1\\
\fv_2\\
\hline
\zerov\\
\zerov\\
\end{array}
\right],
\end{equation}
while the Schur complement matrix ${\mathsf\Sigma}$ and the associated right hand side $\mathsf {\boldsymbol\chi}$ become
\begin{equation}\label{eq:schur_2_conforming}
{\mathsf\Sigma} =
{\mathsf I} + {\mathsf R}_{\Gamma I}\, {\mathsf A}_{II}^{-1}\,{\mathsf A}_{I\Gamma},
\quad
{\mathsf {\boldsymbol\chi}}=
{\mathsf R}_{\Gamma I} \, {\mathsf A}_{II}^{-1} \, {\fbv},
\quad
\text{with } {\mathsf R}_{\Gamma I}=\left[\begin{array}{cc}
{\mathsf 0} & {\mathsf R}_{\Gamma_12}\\
{\mathsf R}_{\Gamma_21}& {\mathsf 0}\end{array}\right].
\end{equation}

\medskip
The Schur complement system \eqref{eq:schur_2} can be assembled and solved by direct methods when the number of degrees of freedom in each subdomain is small. This, however, is seldom the case in real-life applications, so iterative methods are typically used to solve it. Since $\mathsf \Sigma$ is not symmetric, we consider matrix-free Krylov methods like GMRES \cite{saad_schultz} for which neither ${\mathsf \Sigma}$ nor the corresponding right-hand side ${\mathsf{ \boldsymbol\chi}}$ need to be assembled. Indeed, each iteration of the Krylov method would require performing matrix-vector multiplications with the matrix ${\mathsf \Sigma}$ that, in particular, involves computing ${\mathsf A}_{II}^{-1}{\mathsf A}_{I\Gamma}\zetabv$ for any vector $\zetabv= [\zetav_1,\zetav_2]^T$ defined at the nodes on the interfaces $\Gamma_1$ and $\Gamma_2$. Recalling the definition \eqref{eq:mat_blocchi}$_1$ of the matrices ${\mathsf A}_{II}$ and ${\mathsf A}_{I\Gamma}$, this corresponds to solving local problems in $\Omega_k$ with Dirichlet boundary condition on $\Gamma_k$ described by the nodal values contained in the vector $\zetav_k$, for $k=1,2$. This can be performed by any efficient algebraic solver for inverting the local stiffness matrices ${\mathsf A}_{kk}$ or by any available existing software that can solve local Dirichlet problems independently inside each subdomain. Therefore, ICDD provides a very flexible framework that can be implemented in a fully non-intrusive way, allowing to reuse existing software to deal with each local subproblem.

\medskip

For the sake of completeness, we provide the algebraic form of the dual ICDD method (\ref{eq:os2_with_dual}).
By using the notations introduced in Sect. \ref{sec:discrete}, and exploiting \eqref{eq:alg_nc_interpolation} to formulate the last two equations of (\ref{eq:os2_with_dual}), the algebraic form of the \emph{dual ICDD method} (\ref{eq:os2_with_dual}) reads
\begin{eqnarray}\label{eq:os2h_dual_mat}
\arrayrulecolor{violachiaro}
\renewcommand{\arraystretch}{1.2}
\left[\begin{array}{cc|cc|cc}
{\mathsf A}_{11} & {\mathsf 0} & 
 {\mathsf 0} & {\mathsf 0} &
 {\mathsf A}_{1\Gamma_1} &{\mathsf 0} \\
{\mathsf 0} & {\mathsf A}_{22} &
 {\mathsf 0} & {\mathsf 0} &
{\mathsf 0} &  {\mathsf A}_{2\Gamma_2}\\
\hline
{\mathsf 0} &  -\mathsf A_{1\Gamma_1}{\mathsf T}_{12}^0 &
\mathsf A_{11} & \mathsf 0 &  
\mathsf A_{1\Gamma_1}  & - \mathsf A_{1\Gamma_1}{\mathsf T}_{12}^\Gamma\\
-\mathsf A_{2\Gamma_2} {\mathsf T}_{21}^0 & {\mathsf 0} &
\mathsf 0 & \mathsf A_{22} &
-\mathsf A_{2\Gamma_2}{\mathsf T}_{21}^\Gamma & 
\mathsf A_{2\Gamma_2} \\
\hline
-\mathsf T_{12}^\Gamma \mathsf T_{21}^0 & -\mathsf T_{12}^0 &
\mathsf 0 & \mathsf T_{12}^0 & 
\mathsf I -\mathsf T_{12}^\Gamma \mathsf T_{21}^\Gamma & \mathsf 0\\
-\mathsf T_{21}^0 & -\mathsf T_{21}^\Gamma \mathsf T_{12}^0 &
\mathsf T_{21}^0 & \mathsf 0&
\mathsf 0 & \mathsf I-\mathsf T_{21}^\Gamma \mathsf T_{12}^\Gamma
\\
\end{array}
\right]
\left[\begin{array}{c}
\uv_1^0\\
\uv_2^0\\
\hline
\pv_1^0\\
\pv_2^0\\
\hline
\lv_1\\
\lv_2
\end{array}
\right]
=
\left[\begin{array}{c}
\fv_1\\
\fv_2\\
\hline
0\\
0\\
\hline
0\\
0\\
\end{array}
\right],
\end{eqnarray}
or equivalently, 
\begin{eqnarray*}
\left[\begin{array}{ccc}
\mathsf A_{II} & \mathsf 0 & \mathsf A_{I\Gamma}\\
-\mathsf A_{I\Gamma}\mathsf T^0 & \mathsf A_{II} & \mathsf A_{I\Gamma}(\mathsf I-\mathsf T^\Gamma)\\
-(\mathsf I+\mathsf T^\Gamma)\mathsf T^0 & \mathsf T^0 & (\mathsf I+\mathsf T^\Gamma)(\mathsf I-\mathsf T^\Gamma)
\end{array}\right]
\left[\begin{array}{c}
\uv^0\\
\pv^0\\
\lbv
\end{array}\right]=
\left[\begin{array}{c}
\fbv^0\\
\mathsf 0\\
\mathsf 0
\end{array}\right].
\end{eqnarray*}
Notice that the stiffness matrices of the dual problems (in the third and fourth equations of (\ref{eq:os2h_dual_mat})) coincide with the matrices appearing in the first two equations. Thus, no extra storage is required to solve the dual problems whose unknowns are $\pv^0_1$ and $\pv^0_2$.

The Schur complement system of (\ref{eq:os2h_dual_mat}) with respect to the control (interface) variable $\boldsymbol{\lambda}$ is
\begin{equation}\label{eq:schur_dual}
    \mathsf \Sigma_D \boldsymbol\lambda = \boldsymbol\chi_D\,,
\end{equation}
where
\begin{equation}\label{eq:schur_dual_mat_rhs}
\mathsf{\Sigma}_D = (2\mathsf{I} - \mathsf{\Sigma}) \, \mathsf{\Sigma}
\quad \text{and} \quad
\boldsymbol{\chi}_D = (2\mathsf{I} - \mathsf{\Sigma}) \, \boldsymbol{\chi} \, .
\end{equation}

It is clear that each GMRES iteration to solve the dual system \eqref{eq:os2h_dual_mat} or \eqref{eq:schur_dual} costs about twice as much as one GMRES iteration for solving \eqref{eq:os2h_mat} or \eqref{eq:schur_2}. Thus, using the dual ICDD method is advantageous only if it halves the number of iterations at least. We will further discuss this in Sect. \ref{sec:numerical_results}.

\subsection{Comparison with Schwarz-type methods}\label{sec:comparison_Schwarz}

In the case of conforming discretizations on the overlap, ICDD shares similarities with some Schwarz methods.

First, notice that system \eqref{eq:os2_alg_matrix_conf} coincides with system (1.10) in \cite{sbg} that reformulates the Schwarz method by highlighting the role of interface and internal degrees of freedom. However, the most common approach in the literature is to formulate the Schwarz method as a preconditioner instead of as a solver. More precisely, considering the differential problem \eqref{eq:globalProblem}, and introducing a suitable discretization on a global triangulation ${\cal T}_h$ in $\Omega$, one obtains the corresponding linear system
\begin{equation}\label{eq:global_linear_system}
    {\mathsf A}\,\uv=\mathsf \fv.
\end{equation}
This is solved by a suitable iterative method (e.g., CG, GMRES) using Schwarz-type preconditioners such as the Additive Schwarz \cite{toselli-widlund} and the Restricted Additive Schwarz \cite{Cai-Sarkis1999} preconditioners.

In the formulation of these methods, the two sub-triangulations $\mathcal T_{1,h}$ and $\mathcal T_{2,h}$ of the overlapping subdomains $\Omega_1$ and $\Omega_2$ are obtained as the restriction of the global triangulation ${\cal T}_h$ of $\Omega$, and, differently from the ICDD approach, no duplication of nodes occurs in the overlapping region. Then, for $k = 1,2$, let $W_k$ be the set of dofs in the open subdomain $\Omega_k$ and $N_k$ its dimension, while $W$ of dimension $N$ is the set of dofs of $\uv$.
Moreover, let ${\mathsf R}_k\in\mathbb R^{N_k\times N}$ be the restriction matrices that map the array of global dofs to the local ones, and the local matrices ${\mathsf A}_{kk}={\mathsf R}_k\,{\mathsf A}\,{\mathsf R}_k^T$. 

Then, the \emph{Additive Schwarz (AS)} preconditioner is defined by 
\begin{equation}\label{eq:preco_as}
{\mathsf P_{AS}^{-1}=\mathsf R_1^T\mathsf A_{11}^{-1}\mathsf R_1+\mathsf R_2^T\mathsf A_{22}^{-1}\mathsf R_2},
\end{equation}
and the system 
\begin{equation}\label{eq:pas_global}
\mathsf P_{AS}^{-1}\,{\mathsf A}\,\uv=\mathsf P_{AS}^{-1}\,\fv
\end{equation}
is typically solved by the Conjugate Gradient method because both $\mathsf A$ and $\mathsf P_{AS}$ are symmetric and positive definite matrices. 

The \emph{Restricted Additive Schwarz (RAS)} preconditioner introduced in \cite{Cai-Sarkis1999} is a better-performing variant of AS. The idea of RAS is to replace the matrices $\mathsf R_k^T$ in (\ref{eq:preco_as}) with suitable prolongation matrices $\widetilde{\mathsf R}_k\in\mathbb R^{N\times N_k}$, which extend the array of local dofs to the global one, such that
\begin{equation*}
    \widetilde {\mathsf R}_1 \mathsf R_1+\widetilde {\mathsf R}_2 \mathsf R_2=\mathsf I.
\end{equation*}
The original definition of $\widetilde {\mathsf R}_k$ is given in \cite{cai_dryja_sarkis}, and other choices, including suitable weights, are proposed in \cite{frommer_szyld}.
After defining 
\begin{equation*}
{\mathsf P_{RAS}^{-1}=\widetilde{\mathsf R}_1\mathsf A_{11}^{-1}\mathsf R_1+\widetilde{\mathsf R}_2\mathsf A_{22}^{-1}\mathsf R_2},
\end{equation*}
solving (\ref{eq:global_linear_system}) with RAS means solving
\begin{equation}\label{eq:pras_global}
\mathsf P_{RAS}^{-1}\,{\mathsf A}\,\uv = \mathsf P_{RAS}^{-1}\,\fv
\end{equation}
by a suitable Krylov method.
Notice that, although $\mathsf A$ is a symmetric positive definite matrix, $\mathsf{P}_{RAS}$ is not.   

Remark that for both the AS and the RAS preconditioners, the array $\uv$ contains volume degrees of freedom, thus the dimension of the systems  (\ref{eq:pas_global}) and \eqref{eq:pras_global} is much larger than that of (\ref{eq:schur_2_conforming}).

An approach that uses only the interface degrees of freedom as in (\ref{eq:schur_2_conforming}), is the Substructured Restricted Additive Schwarz (SRAS) method proposed in \cite{SRAS2022}. This is a variant of RAS that works directly on the substructure $S$ of the decomposition, which is the set of the dofs belonging to $\cup_k (\partial\Omega_k\setminus\partial\Omega)$.
To recall the formulation of this method, we introduce the restriction matrix $\overline {\mathsf R}: W\to S$, which maps the dofs from the global space to the substructure, and the prolongation matrix $\mathsf P: S\to W$ that extends arrays with substructure dofs to the global volume arrays. A possible choice for $\overline {\mathsf P}$ is $\overline {\mathsf P}=\overline {\mathsf R}^T$. Then, denoting by $\uv_S$ the array containing the dofs belonging to $S$, the SRAS method reads:
\begin{equation}\label{eq:preco_sras}
\overline {\mathsf R}\,\mathsf P_{RAS}^{-1}{\mathsf A}\overline {\mathsf P}\,\uv_S=\overline {\mathsf R}\,\mathsf P_{RAS}^{-1}\,\fv
 \end{equation}
and system \eqref{eq:preco_sras} is solved by a suitable Krylov method. In this context, $\uv_S$ plays the role of the vector $\boldsymbol{\lambda}$ in ICDD (see \eqref{eq:schur_2_conforming}).
In Appendix \ref{app:ICDD-SRAS}, we show that, in the case of conformal discretizations (see Assumption \ref{ass_conforming}), ICDD and SRAS coincide. Therefore, the convergence analysis of ICDD that we develop in Sect. \ref{sec:convergence-analysis} also applies to SRAS, thus extending the theoretical results of \cite{SRAS2022}. From an implementation point of view, it must be noted that ICDD does not require assembling the global matrix $\mathsf A$, computing the global residual, and sharing the residual among the different processors that manage the local subproblems. This further reduces the overall computational cost as only the interface degrees of freedom need to be exchanged.

\section{Spectral properties of the ICDD matrices}\label{sec:convergence-analysis}

In this section, we analyze the convergence of GMRES iterations to solve the Schur complement system \eqref{eq:schur_2}. In particular, we are interested in measuring the convergence rate versus the overlap width $\delta$ and the discretization parameters $h_k$ and $p_k$, i.e., the local mesh size and local polynomial degree. Our estimates will also take into account the size of the subdomains. Although this information is not crucial for the case of two-subdomains that we primarily consider in this work, the estimate is relevant, considering that the ICDD method can be extended to more general decompositions with more than two subdomains, even including cross points. The analysis is carried out in the 2D case and under the assumption that the local triangulations are conforming in the overlapping region.

This section is organised as follows. First, in Sect. \ref{sec:weak_schur}, we rewrite the interface equations \eqref{eq:interfaceConforming} in weak form as this will permit us to provide a variational interpretation of the Schur complement matrix $\mathsf{\Sigma}$. Then, we estimate the eigenvalues of the matrices related to $\mathsf{\Sigma}$ (Sect. \ref{sec:analysisSchur}) exploiting the estimate on the trace of the state solutions on the interfaces, provided in Sect. \ref{sec:estimatesOnTraces}.

\subsection{Weak formulation of the interface conditions}\label{sec:weak_schur}

We introduce the following Assumption \ref{ass_conforming}.

\begin{assumption}\label{ass_conforming}
Let the local meshes $\mathcal{T}_{h,h_k}$ be conforming on the overlapping region $\overline{\Omega_1 \cap \Omega_2}$, then we set $h=h_1=h_2$ in $\overline{\Omega_1 \cap \Omega_2}$ and $p=p_1=p_2$ in $\Omega_1 \cup \Omega_2$.
\end{assumption}

For $k=1,2$, the pair of interface equations \eqref{eq:interfaceConforming} can be reformulated weakly as
\begin{equation*}
\int_{\Gamma_1}(\lambda_{1,h} - u_{2,h}{}_{|\Gamma_1})\mu_{1,h} d\Gamma
+\int_{\Gamma_2}(\lambda_{2,h} - u_{1,h}{}_{|\Gamma_2}) \mu_{2,h} d\Gamma
=0 \qquad \forall
(\mu_{1,h},\mu_{2,h})\in{\boldsymbol\Lambda}_h.
\end{equation*}

Thanks to (\ref{eq:uh_expansion}) and by expanding $\lambda_{k,h}$ with respect to the basis $\mu_j^{(k)}$, we have, for $i \in \mathcal{I}_{\Gamma_k}$,
\begin{eqnarray}\label{eq:weak-difference-on-Gamma}
    \begin{array}{ll}
    \displaystyle \int_{\Gamma_k}(\lambda_{k,h}
    - u_{\ell,h}{}_{|\Gamma_k}) \mu_i^{(k)} 
    & \displaystyle = \int_{\Gamma_k}
    \Big(\sum_{j\in{\mathcal I}_{\Gamma_k}}(\lv_k)_j \mu^{(k)}_j
    -\sum_{j\in{\mathcal I}_{\Omega_\ell}}(\uv_\ell^0)_j \varphi^{(\ell)}_j{}_{|\Gamma_k} - \sum_{j\in{\mathcal I}_{\Gamma_\ell}}(\lv_\ell)_j \varphi^{(\ell)}_j{}_{|\Gamma_k} \Big)\mu^{(k)}_i\\[2mm]
    & \displaystyle =
    \sum_{j\in{\mathcal I}_{\Gamma_k}}(\lv_k)_j
    \int_{\Gamma_k} \mu^{(k)}_j \mu^{(k)}_i
    - \sum_{j\in{\mathcal I}_{\Omega_\ell}}(\uv_\ell^0)_j
    \int_{\Gamma_k} \varphi^{(\ell)}_j{}_{|\Gamma_k}\,  \mu^{(k)}_i\\[2mm]
    &\displaystyle \qquad - \sum_{j\in{\mathcal I}_{\Gamma_\ell}}(\lv_\ell)_j
    \int_{\Gamma_k} \varphi^{(\ell)}_j{}_{|\Gamma_k}\, \mu^{(k)}_i.
\end{array}
\end{eqnarray}

The integrals $\int_{\Gamma_k} \mu^{(k)}_j \mu^{(k)}_i$ involve basis functions defined on the same mesh, so we denote by ${\mathsf M}_{\Gamma_k}$ the mass matrix on the interface  $\Gamma_k$, whose entries are
\begin{equation}\label{eq:mass_exact}
({\mathsf M}_{\Gamma_k})_{i,j}=\int_{\Gamma_k}\mu_j^{(k)}\,
\mu_i^{(k)}, \quad i,j\in{\mathcal I}_{\Gamma_k}.
\end{equation}

Moreover, we set, for $k\neq \ell$,
\begin{eqnarray*}
\begin{array}{ll}
\displaystyle (\widetilde{\mathsf R}_{\Gamma_k\ell})_{i,j} & \displaystyle := \int_{\Gamma_k} \varphi_j^{(\ell)}{}_{|\Gamma_k}\, \mu_i^{(k)} = 
\displaystyle \int_{\Gamma_k}\sum_{m\in {\mathcal I}_{\Gamma_k}} \varphi_j^{(\ell)}(\mathbf x_m^{(k)}) \mu_m^{(k)}\, \mu_i^{(k)} \\[1mm]
 &=\displaystyle
\sum_{m\in {\mathcal I}_{\Gamma_k}} ({\mathsf R}_{\Gamma_k\ell})_{m,j}
\int_{\Gamma_k}\mu_m^{(k)}\mu_i^{(k)}=(\mathsf M_{\Gamma_k}{\mathsf R}_{\Gamma_k \ell})_{i,j},
\qquad i\in\mathcal I_{\Gamma_k}, \; j\in\mathcal I_{\Omega_\ell},
\end{array}
\end{eqnarray*}
so that
\begin{equation*}
\widetilde{\mathsf{R}}_{\Gamma_k \ell} = \mathsf M_{\Gamma_k}{\mathsf R}_{\Gamma_k \ell} \, .
\end{equation*}
Finally, since $\varphi^{(\ell)}_j{}_{|\Gamma_k} = 0$ for any $j\in \mathcal I_{\Gamma_\ell}$ due to the hypothesis that $\Gamma_1\cap\Gamma_2=\emptyset$ and to the conformity of the meshes, the last integral in \eqref{eq:weak-difference-on-Gamma} is identically null.

It follows that
\begin{equation}\label{eq:weak-difference-on-Gamma-1}
    \int_{\Gamma_k}(\lambda_{k,h} - u_{\ell,h}{}_{|\Gamma_k}) \mu_i^{(k)}=
    (\mathsf M_{\Gamma_{k}} \, \lv_k - \widetilde{\mathsf R}_{\Gamma_k \ell} \, \uv_\ell^0 )_i, \qquad \mbox{for any }i\in {\mathcal I_{\Gamma_k}}.
\end{equation}
Now, by setting
\begin{equation*}
\widetilde{\mathsf R}_{\Gamma I} = \left[\begin{array}{cc}
{\mathsf 0} & \widetilde{\mathsf R}_{\Gamma_1 2}\\
\widetilde{\mathsf R}_{\Gamma_2 1} & {\mathsf 0}
\end{array} \right], 
\end{equation*}
the counterpart of \eqref{eq:os2_alg_matrix_conf} for the weak form of the interface equations becomes
\begin{eqnarray*}
\left[\begin{array}{cc}
{\mathsf A}_{II} & {\mathsf A}_{I\Gamma} \\
-\widetilde{\mathsf R}_{\Gamma I} & \mathsf M_\Gamma
\end{array}\right]
\left[\begin{array}{c}
\uv^0\\
\lbv\end{array}\right]
=
\left[\begin{array}{c}
\fbv\\
\zerov
\end{array}
\right]
\end{eqnarray*}
where 
\begin{equation*}
{\mathsf M}_\Gamma=\left[\begin{array}{cc}
{\mathsf M}_{\Gamma_1} & {\mathsf 0}\\
{\mathsf 0} & {\mathsf M}_{\Gamma_2}
\end{array} \right].
\end{equation*}

By eliminating the unknown $\uv^0$, the \textit{weak} Schur complement system with respect to the interface control variables becomes:
\begin{equation}\label{eq:schur_2_tilde}
\widetilde{\mathsf \Sigma} \lbv=\widetilde{\mathsf{\boldsymbol\chi}},
\end{equation}
where
\begin{equation}\label{eq:schur_2_mat_rhs_tilde}
\widetilde{\mathsf \Sigma} = \mathsf M_\Gamma + \widetilde{\mathsf R}_{\Gamma I}{\mathsf A}^{-1}_{II}
{\mathsf A}_{I\Gamma}
\quad\mbox{and} \quad
\widetilde{\mathsf{\boldsymbol\chi}}= \widetilde{\mathsf R}_{\Gamma I}{\mathsf A}_{II}^{-1}{\fbv}.
\end{equation}

\smallskip

The nomenclature \textit{weak} for the Schur complement $\widetilde{\mathsf{\Sigma}}$ refers to the weak formulation of the interface equation \eqref{eq:weak-difference-on-Gamma-1} that was used to obtain it. Moreover, it is straightforward to see that
\begin{equation}\label{eq:sigma-chi-strong-weak}
    \widetilde{\mathsf \Sigma}={\mathsf M}_\Gamma \mathsf \Sigma
    \qquad \text{and} \qquad
    \widetilde{\mathsf{\boldsymbol\chi}}={\mathsf M}_\Gamma \mathsf{\boldsymbol\chi}.
\end{equation}

\smallskip

The following theorem characterizes the weak Schur complement operator $\widetilde{\mathsf\Sigma}$ defined in (\ref{eq:schur_2_mat_rhs_tilde}). 

\begin{theorem}
For $k=1,2$ and for any $\zeta_{k,h}\in \Lambda_{k,h}$, let
$u_{k,h}^{\zeta_{k}}$ be the \emph{discrete harmonic extension} of $\zeta_{k,h}$
to $\Omega_k$, i.e. the function $u_{k,h}^{\zeta_{k}}\in V_{k,h}$ s.t. 
$u_{k,h}^{\zeta_{k}}=\zeta_{k,h}$ on $\Gamma_k$
and 
\begin{equation*}
a_k(u_{k,h}^{\zeta_{k}},v_{h})=
\int_{\Omega_k} \left( \nu_k \, \nabla u_{k,h}^{\zeta_{k}} \cdot \nabla v_{h} +
\gamma_k \,  u_{k,h}^{\zeta_{k}} v_{h} \right) =
0 \qquad \forall v_{h}\in V_{k,h}^0.    
\end{equation*}
Then,
\begin{equation}\label{def:char_Sigma_2}
    {\zetabv^T \,\widetilde{\mathsf\Sigma} \,\zetabv}
=\int_{\Gamma_1} (u_{1,h}^{\zeta_1} - u_{2,h}^{\zeta_2}{}_{|\Gamma_1})\zeta_{1,h}
+
\int_{\Gamma_2}(u_{2,h}^{\zeta_2} - u_{1,h}^{\zeta_1}{}_{|\Gamma_2})\zeta_{2,h},
\end{equation}
where $\zetabv=\begin{bmatrix}\zetav_1 ,\, \zetav_2\end{bmatrix}^T$, and $\zetav_k=[\zeta_{k,h}(\mathbf x)_i^{(k)}]_{i\in \mathcal I_{\Gamma_k}}$.
\end{theorem}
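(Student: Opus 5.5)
The plan is a direct algebraic computation that combines the definition \eqref{eq:schur_2_mat_rhs_tilde} of $\widetilde{\mathsf\Sigma}$ with the weak interface identity \eqref{eq:weak-difference-on-Gamma-1}.

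First, I identify the vector $-\mathsf A_{II}^{-1}\mathsf A_{I\Gamma}\zetabv$ with the internal nodal values of the discrete harmonic extensions. Since $\mathsf A_{II}$ and $\mathsf A_{I\Gamma}$ are block diagonal, it suffices to work on each $\Omega_k$. Write $u_{k,h}^{\zeta_k}=\sum_{j\in\mathcal I_{\Omega_k}}(\wv_k^0)_j\varphi_j^{(k)}+\sum_{j\in\mathcal I_{\Gamma_k}}(\zetav_k)_j\varphi_j^{(k)}$, as in \eqref{eq:uh_expansion}. Testing the defining relation $a_k(u_{k,h}^{\zeta_k},\varphi_i^{(k)})=0$ for $i\in\mathcal I_{\Omega_k}$ gives $\mathsf A_{kk}\wv_k^0+\mathsf A_{k\Gamma_k}\zetav_k=\zerov$. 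Hence $\wv_k^0=-\mathsf A_{kk}^{-1}\mathsf A_{k\Gamma_k}\zetav_k$, that is, $\wv^0:=-\mathsf A_{II}^{-1}\mathsf A_{I\Gamma}\zetabv$ stacks the internal dofs of $u_{1,h}^{\zeta_1}$ and $u_{2,h}^{\zeta_2}$. It follows that
\[
\widetilde{\mathsf\Sigma}\zetabv=\mathsf M_\Gamma\zetabv-\widetilde{\mathsf R}_{\Gamma I}\wv^0 ,
\]
whose $k$-th block is $\mathsf M_{\Gamma_k}\zetav_k-\widetilde{\mathsf R}_{\Gamma_k\ell}\wv_\ell^0$ with $\ell=3-k$.

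Next, I apply \eqref{eq:weak-difference-on-Gamma-1} with $\lambda_{k,h}$ replaced by $\zeta_{k,h}$ and $u_{\ell,h}$ replaced by $u_{\ell,h}^{\zeta_\ell}$. This is legitimate because that identity was derived using only the expansion \eqref{eq:uh_expansion} and the conformity assumption (which kills the $\lv_\ell$ term, as $\varphi_j^{(\ell)}|_{\Gamma_k}=0$ for $j\in\mathcal I_{\Gamma_\ell}$), not the discrete equations. It gives, for each $i\in\mathcal I_{\Gamma_k}$,
\[
(\widetilde{\mathsf\Sigma}\zetabv)_{k,i}=\int_{\Gamma_k}\bigl(\zeta_{k,h}-u_{\ell,h}^{\zeta_\ell}{}_{|\Gamma_k}\bigr)\mu_i^{(k)} .
\]
Multiplying by $(\zetav_k)_i$ and summing over $i$, I use $\sum_i(\zetav_k)_i\mu_i^{(k)}=\zeta_{k,h}$, which holds because the basis is nodal Lagrange. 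Then I replace $\zeta_{k,h}$ by $u_{k,h}^{\zeta_k}{}_{|\Gamma_k}$, since the two coincide on $\Gamma_k$. Adding the contributions for $k=1,2$ yields \eqref{def:char_Sigma_2}.

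The only delicate point is verifying that the $\mathsf T^\Gamma$-type term truly vanishes, so that $\widetilde{\mathsf R}_{\Gamma I}$ alone accounts for the trace of $u_{\ell,h}^{\zeta_\ell}$ on $\Gamma_k$. This relies on Assumption~\ref{ass_conforming} and on $\Gamma_1\cap\Gamma_2=\emptyset$. Interface nodes of $\Gamma_\ell$ located on $\partial\Omega$ are excluded from $\mathcal I_{\Gamma_\ell}$, so no endpoint contributions arise. Everything else is bookkeeping.
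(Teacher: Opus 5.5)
Your proposal is correct and follows the paper's proof essentially step for step. Like the paper, you identify $-\mathsf A_{II}^{-1}\mathsf A_{I\Gamma}\zetabv$ with the interior dofs of the discrete harmonic extensions, rewrite $\zetabv^T\widetilde{\mathsf\Sigma}\zetabv$ as $\zetabv^T(\mathsf M_\Gamma\zetabv-\widetilde{\mathsf R}_{\Gamma I}\uv^\zeta)$, apply \eqref{eq:weak-difference-on-Gamma-1}, and conclude using $u_{k,h}^{\zeta_k}=\zeta_{k,h}$ on $\Gamma_k$; your added remark that \eqref{eq:weak-difference-on-Gamma-1} is a purely algebraic identity (relying only on the nodal expansion and mesh conformity) makes explicit a point the paper uses tacitly.
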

\begin{proof}
Let us denote by $\uv_k^\zeta=[u_{k,h}^{\zeta_k}({\bf x}_i^{(k)})]_{i\in {\cal I}_{\Omega_k}}$ the array of the degrees of freedom internal to $\Omega_k$ of the harmonic extension of $\zeta_{k,h}$. Using the notations introduced in (\ref{eq:matrici}), $\uv_k^\zeta$ is the solution of the algebraic system
\begin{equation*}
 {\mathsf A}_{kk} \uv_k^\zeta+{\mathsf A}_{k\Gamma_k}\zetav_k=\zerov, 
\end{equation*}
thus $\uv_k^\zeta=-{\mathsf A}_{kk}^{-1}{\mathsf A}_{k\Gamma_k}\zetav_k$.
After setting 
$\uv^\zeta=\begin{bmatrix} \uv_1^\zeta, \,\uv_2^\zeta \end{bmatrix}^T$, we can write $\uv^\zeta=-{\mathsf A}_{II}^{-1}{\mathsf A}_{I\Gamma}\zetabv$, and thanks to (\ref{eq:schur_2_mat_rhs_tilde}) and (\ref{eq:weak-difference-on-Gamma-1}), it holds
\begin{eqnarray*}
\zetabv^T \widetilde{\mathsf\Sigma} \, \zetabv &=& \displaystyle \zetabv^T (\mathsf M_\Gamma \zetabv + \widetilde{\mathsf R}_{\Gamma I}{\mathsf A}_{II}^{-1}{\mathsf A}_{I\Gamma}\zetabv)=\zetabv^T (\mathsf M_\Gamma \zetabv - \widetilde{\mathsf R}_{\Gamma I}\uv^\zeta)
= \begin{bmatrix}\zetav_1 ,\, \zetav_2\end{bmatrix}
\begin{bmatrix}
 {\mathsf M}_{\Gamma_1} \zetav_1 - \widetilde {\mathsf R}_{\Gamma_1 2} \uv_2^\zeta\\
   {\mathsf M}_{\Gamma_2} \zetav_2 - \widetilde {\mathsf R}_{\Gamma_2 1} \uv_1^\zeta
\end{bmatrix}
\\[3mm]
&=& \displaystyle
\int_{\Gamma_1}(\zeta_{1,h} - u_{2,h}^{\zeta_2}{}_{|\Gamma_1})\zeta_{1,h} +
\int_{\Gamma_2}(\zeta_{2,h} - u_{1,h}^{\zeta_1}{}_{|\Gamma_2})\zeta_{2,h}.
\end{eqnarray*}
Recalling that $u_{k,h}^{\zeta_k}=\zeta_{k,h}$ on $\Gamma_k$, the thesis follows.
\end{proof}

\subsection{Analysis of the weak Schur complement matrix}\label{sec:analysisSchur}

\medskip

We now proceed to analyze the Schur complement matrix $\widetilde{\mathsf \Sigma}$ defined in \eqref{eq:sigma-chi-strong-weak}.

\medskip

The following Lemma estimates the spectral properties of the interface mass matrix ${\mathsf M}_{\Gamma_k}$.

\begin{lemma}\label{lemma:eig_mass}
Let Assumptions \ref{ass_mesh} and \ref{ass_conforming} hold and let $\mathsf M_{\Gamma_k}$ be the interface mass matrix defined in (\ref{eq:mass_exact}). 
There exist two positive constants $\widehat{c}_1$ and $\widehat{c}_2$ independent of $h$ and of $p$, such that, for any $\zetav_k\in\mathbb R^{n_k}\setminus\ \{ \zerov \}$, it holds
\begin{equation}\label{eq:eig_mass_general}
\widehat{c}_1 \frac{h}{p^2}\leq \frac{\zetav_k^T{\mathsf M}_{\Gamma_k}\zetav_k}{\zetav_k^T \zetav_k}\leq \widehat{c}_2 \frac{h}{p}, \qquad k=1,2.
\end{equation}
\end{lemma}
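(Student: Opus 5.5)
The plan is to reduce the global Rayleigh quotient to element-level quotients on the one-dimensional interface. Since $\Gamma_k$ is a segment, the restriction of $\mathcal{T}_{k,h}$ to $\Gamma_k$ is a one-dimensional quasi-uniform partition into edges $e$ of length $h_e\simeq h$. Here quasi-uniformity comes from Assumption \ref{ass_mesh} and $h_e$ is the edge length. On each edge the interface basis functions $\mu_i^{(k)}$ are the Lagrange polynomials of degree $p$ at the $p+1$ LGL nodes mapped to $e$. We write $\mathsf M_{\Gamma_k}=\sum_e \mathsf B_e^T\mathsf M_e\mathsf B_e$, where $\mathsf B_e$ is the Boolean extraction of the local nodal values. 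By the affine map to $\hat e=[-1,1]$, this gives $\mathsf M_e=\frac{h_e}{2}\hat{\mathsf M}$, with $\hat{\mathsf M}_{ij}=\int_{-1}^1\hat\ell_j\hat\ell_i$ the reference LGL-Lagrange mass matrix. For quads, the $C^1$ diffeomorphism restricted to the edge gives a Jacobian bounded above and below by constants times $h_e$, so the same form holds up to constants. The problem therefore reduces to two steps. First, bound the Rayleigh quotient of $\hat{\mathsf M}$ between $c/p^2$ and $C/p$. Second, assemble, using that each interface node is shared by at most two edges.

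For the reference estimate, the key tool is the equivalence between the exact $L^2$ norm of a polynomial of degree $p$ and the discrete LGL norm. Let $\hat w_j$ be the LGL weights and $v=\sum_j \hat\zeta_j\hat\ell_j$. Then $\hat{\boldsymbol\zeta}^T\hat{\mathsf M}\hat{\boldsymbol\zeta}=\|v\|^2_{L^2(-1,1)}$, and the classical result (e.g. \cite{chqz06}) gives $\|v\|^2_{L^2}\le \sum_j\hat w_j v(\hat x_j)^2\le 3\|v\|^2_{L^2}$ for all $v\in\mathbb P_p$. Hence
\[
\tfrac13\sum_j \hat w_j\hat\zeta_j^2\le \hat{\boldsymbol\zeta}^T\hat{\mathsf M}\hat{\boldsymbol\zeta}\le \sum_j\hat w_j\hat\zeta_j^2 .
\]
It then remains to use the known bounds on LGL weights. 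The endpoint weights are $\hat w_0=\hat w_p=2/(p(p+1))\simeq p^{-2}$, and all weights satisfy $c\,p^{-2}\le \hat w_j\le C\,p^{-1}$: the interior weights behave like $\frac{\pi}{p}\sqrt{1-\hat x_j^2}$, which is maximal, of order $p^{-1}$, near the center. This gives $c\,p^{-2}|\hat{\boldsymbol\zeta}|^2\le\hat{\boldsymbol\zeta}^T\hat{\mathsf M}\hat{\boldsymbol\zeta}\le C\,p^{-1}|\hat{\boldsymbol\zeta}|^2$.

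For the assembly, we sum over the edges. For the upper bound,
\[
\boldsymbol\zeta_k^T\mathsf M_{\Gamma_k}\boldsymbol\zeta_k\le \max_e\frac{h_e}{2}\,\frac{C}{p}\sum_e|\mathsf B_e\boldsymbol\zeta_k|^2\le 2\cdot\frac{C h}{2p}|\boldsymbol\zeta_k|^2,
\]
since each node appears in at most two local vectors. For the lower bound, $\sum_e|\mathsf B_e\boldsymbol\zeta_k|^2\ge|\boldsymbol\zeta_k|^2$ and $h_e\ge c\,h$ by quasi-uniformity, which gives $\widehat c_1 h/p^2$. Nodes on $\partial\Omega$ are excluded from $\mathcal I_{\Gamma_k}$, which only removes rows and columns, i.e. passes to a principal submatrix, so both bounds persist. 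For simplices the edge nodes are LGL by Assumption \ref{ass_mesh}, so the same argument applies verbatim. Assumption \ref{ass_conforming} is only needed to fix a single $h$ and $p$.

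The main obstacle is the reference-element step, namely the uniform-in-$p$ equivalence between the exact and LGL-discrete $L^2$ norms together with the two-sided bounds on the LGL weights. Both are classical spectral-method facts, which I would cite from \cite{chqz06} rather than reprove. The only subtlety is that the upper bound on $\hat w_j$ is of order $p^{-1}$, not $p^{-2}$, which is what produces the mismatch between the exponents in \eqref{eq:eig_mass_general}.
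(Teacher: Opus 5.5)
Your proof is correct and takes essentially the same approach as the paper. It combines the uniform equivalence between the exact and the LGL-discrete $L^2$ norms (constant $3$) with the extreme LGL weights $2/(p(p+1))$ and $\mathcal O(p^{-1})$ and a scaling/assembly step; the paper works with the globally assembled, diagonal LGL interface mass matrix $\mathsf M_{LGL,\Gamma_k}$ rather than your element-by-element sum. Your bookkeeping puts the factor $1/3$ on the lower constant, i.e. $\widehat c_1=\tilde c_1/3$ and $\widehat c_2=\tilde c_2$ in the paper's notation, which is the correct way to apply (\ref{eq:normEquivalenceMatrix}); the paper's stated choice $\widehat c_1=\tilde c_1$ is a harmless slip.
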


\begin{proof}
The proof uses some classical results collected in Appendix \ref{appendix:mass}. Let $\mathsf M_{{LGL},\Gamma_k}$ be the mass matrix on $\Gamma_k$ defined using the discrete inner product (\ref{eq:LGL-innerproduct}) on each element of $\Gamma_k$. Then, thanks to \eqref{eq:extrema_eig_MLGL} and scaling arguments, there exist two positive constants $\tilde{c}_1$ and $\tilde{c}_2$ independent of $h$ and of $p$, such that, for any $\zetav_k\in\mathbb R^{n_k}\setminus\ \{ \zerov \}$, it holds
\begin{equation*}
\tilde{c}_1 \frac{h}{p^2}\leq \frac{\zetav_k^T{\mathsf M}_{LGL,\Gamma_k}\zetav_k}{\zetav_k^T \zetav_k}\leq \tilde{c}_2 \frac{h}{p}, \qquad k=1,2.
\end{equation*}
Finally, in view of (\ref{eq:normEquivalenceMatrix}), the thesis follows with 
$\widehat c_1=\tilde c_1$ and $\widehat c_2=3\tilde c_2$.
\end{proof}

\medskip

We can now prove the following theorem that provides an upper bound for the norm
$\|\widetilde{\mathsf \Sigma}\|_2$.

\begin{theorem}\label{thm:upperbound_max_eig}
Let Assumptions \ref{ass_mesh}--\ref{ass_conforming} be satisfied. Then,
\begin{equation}\label{eq:upperbound_Sigmatilde}
    \|\widetilde{\mathsf{\Sigma}}\|_2 \leq h \, \overline{c}(p),
\end{equation}
where
\begin{equation}\label{eq:overlineCp}
    \overline{c}(p) = 
    \frac{\widehat{c}_2}{p}
    \sqrt{ 1 +  \frac{\widehat{c}_2}{\widehat{c}_1} p } \,,
\end{equation}
and the constant $\widehat{c}_2$ was introduced in Lemma \ref{lemma:eig_mass}.
\end{theorem}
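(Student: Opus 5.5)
The plan is to factor $\widetilde{\mathsf\Sigma}$ through the interface mass matrix and bound each factor separately. By \eqref{eq:sigma-chi-strong-weak} we have $\widetilde{\mathsf\Sigma}=\mathsf M_\Gamma\mathsf\Sigma$, and by \eqref{eq:schur_2_conforming}, $\mathsf\Sigma=\mathsf I+\mathsf R_{\Gamma I}\mathsf A_{II}^{-1}\mathsf A_{I\Gamma}=\mathsf I-\mathsf K$. Here $\mathsf K\zetabv$ is the vector whose $\Gamma_k$-block $\boldsymbol w_k$ contains the nodal values on $\Gamma_k$ of the discrete harmonic extension $u^{\zeta_\ell}_{\ell,h}$, with $\ell=3-k$. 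Hence, for any $\zetabv$,
\begin{equation*}
\widetilde{\mathsf\Sigma}\zetabv=\mathsf M_\Gamma\zetabv-\mathsf M_\Gamma\mathsf K\zetabv ,
\end{equation*}
and the two terms are treated separately (or, to obtain the $\sqrt{1+\cdot}$ structure, by estimating $\|\widetilde{\mathsf\Sigma}\zetabv\|_2^2$ blockwise and collecting the squared contributions).

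\textbf{First term.} Lemma \ref{lemma:eig_mass} gives $\|\mathsf M_\Gamma\|_2\le \widehat c_2 h/p$, since $\mathsf M_\Gamma$ is symmetric positive definite. Therefore $\|\mathsf M_\Gamma\zetabv\|_2\le \widehat c_2\frac hp\|\zetabv\|_2$, which produces the leading factor $\widehat c_2/p$ in \eqref{eq:overlineCp}.

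\textbf{Second term.} I would pass through $L^2$ norms on the interfaces using both sides of Lemma \ref{lemma:eig_mass}:
\begin{equation*}
\|\mathsf M_{\Gamma_k}\boldsymbol w_k\|_2\le \widehat c_2\frac hp\|\boldsymbol w_k\|_2\le \widehat c_2\frac hp\Big(\widehat c_1\frac h{p^2}\Big)^{-1/2}\|u^{\zeta_\ell}_{\ell,h}\|_{L^2(\Gamma_k)} .
\end{equation*}
Next, the $L^2$ trace estimate of Corollary \ref{cor:gamma_gamma} gives $\|u^{\zeta_\ell}_{\ell,h}\|_{L^2(\Gamma_k)}\le C_{\delta,\ell}\|\zeta_{\ell,h}\|_{L^2(\Gamma_\ell)}$ with $C_{\delta,\ell}<1$. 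Converting back with the upper bound $\|\zeta_{\ell,h}\|_{L^2(\Gamma_\ell)}\le(\widehat c_2h/p)^{1/2}\|\zetav_\ell\|_2$ yields
\begin{equation*}
\|\mathsf M_{\Gamma_k}\boldsymbol w_k\|_2\le \widehat c_2\frac hp\sqrt{\frac{\widehat c_2}{\widehat c_1}p}\,\|\zetav_\ell\|_2 .
\end{equation*}
This explains the factor $\sqrt{(\widehat c_2/\widehat c_1)p}$.

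\textbf{Combining.} The $\Gamma_k$-block of $\widetilde{\mathsf\Sigma}\zetabv$ involves $\zetav_k$ through the first term and $\zetav_\ell$ through the second. Summing the squared block contributions over $k=1,2$ (the off-diagonal structure of $\mathsf K$ keeps the two contributions on different components of $\zetabv$) and arranging the constants should give $\|\widetilde{\mathsf\Sigma}\zetabv\|_2\le \widehat c_2\frac hp\sqrt{1+\frac{\widehat c_2}{\widehat c_1}p}\,\|\zetabv\|_2$. The exact way the two contributions combine into $\sqrt{1+x}$ rather than $1+\sqrt x$ must be checked carefully. If only the triangle inequality works, one obtains a bound equivalent up to a factor $\sqrt2$, and I would then adjust how the cross term is absorbed (for instance by Cauchy--Schwarz on $\mathsf M_\Gamma^{1/2}$).

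\textbf{Main obstacle.} The step I expect to be hardest is justifying the trace bound $\|u^{\zeta_\ell}_{\ell,h}\|_{L^2(\Gamma_k)}\le\|\zeta_{\ell,h}\|_{L^2(\Gamma_\ell)}$ for the \emph{discrete} harmonic extension, since Theorem \ref{thm:ls} and Corollary \ref{cor:gamma_gamma} concern the continuous extension. I would first try to argue that the discrete extension converges to the continuous one, or that the same energy argument used in Appendix \ref{appendix:proof} carries over verbatim to $V_{k,h}$. Failing that, I would simply use a uniform stability constant, which affects only the constant and not the $h$ and $p$ scaling.
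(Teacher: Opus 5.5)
Your chain of estimates is the paper's own. The paper also writes $\widetilde{\mathsf\Sigma}\zetabv=\mathsf M_\Gamma\boldsymbol d$ with $\boldsymbol d_k=\zetav_k-\boldsymbol t_{k\ell}$ (your $\boldsymbol w_k$) and bounds $\|\mathsf M_{\Gamma_k}\boldsymbol d_k\|_2\le \widehat c_2\frac hp\|\boldsymbol d_k\|_2$. It then gets $\|\boldsymbol t_{k\ell}\|_2^2\le x\,\|\zetav_\ell\|_2^2$, with $x:=\frac{\widehat c_2}{\widehat c_1}p$, through exactly your steps: lower mass bound, Corollary~\ref{cor:gamma_gamma} with $C_{\delta,\ell}<1$, upper mass bound.

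\textbf{The gap is the combination step, exactly where you hesitated.} The paper obtains $\sqrt{1+x}$ in \eqref{eq:thm6_step1} by asserting $\|\zetav_k-\boldsymbol t_{k\ell}\|_2^2\le\|\zetav_k\|_2^2+\|\boldsymbol t_{k\ell}\|_2^2$ ``by the triangle inequality''. This is equivalent to $\zetav_k^T\boldsymbol t_{k\ell}\ge0$. It fails for $\zetav_k=-\boldsymbol t_{k\ell}$, which is admissible because $\boldsymbol t_{k\ell}$ depends only on $\zetav_\ell$.

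Your parenthetical remark does not rescue it. The off-diagonal structure of $\mathsf K$ means the two contributions come from different input blocks, but they land in the same output block. There they produce the cross term $-2\,\zetav_k^T\mathsf M_{\Gamma_k}^2\boldsymbol w_k$, whose sign is uncontrolled. No Cauchy--Schwarz rearrangement can absorb it as long as you only use $\|\mathsf M_{\Gamma_k}\|_2\le a:=\widehat c_2 h/p$ and $\|\boldsymbol w_k\|_2\le\sqrt x\,\|\zetav_\ell\|_2$. From these two facts alone the best one can conclude is $a(1+\sqrt x)$, which exceeds $a\sqrt{1+x}$. As written, your argument yields $\|\widetilde{\mathsf\Sigma}\|_2\le h\frac{\widehat c_2}{p}(1+\sqrt x)\le\sqrt2\,h\,\overline c(p)$: the same scaling, but not the stated inequality.

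\textbf{The repair.} Apply $\mathsf M_{\Gamma_k}^{1/2}$ to the second term rather than to the cross term, and never pass through $\|\boldsymbol w_k\|_2$. In the conforming case $\boldsymbol w_k^T\mathsf M_{\Gamma_k}\boldsymbol w_k=\|u^{\zeta_\ell}_{\ell,h}\|^2_{L^2(\Gamma_k)}$, so
\[
\|\mathsf M_{\Gamma_k}\boldsymbol w_k\|_2\le\|\mathsf M_{\Gamma_k}^{1/2}\|_2\,\|u^{\zeta_\ell}_{\ell,h}\|_{L^2(\Gamma_k)}\le\Big(\widehat c_2\frac hp\Big)^{1/2}C_{\delta,\ell}\,\|\zeta_{\ell,h}\|_{L^2(\Gamma_\ell)}\le C_{\delta,\ell}\,\widehat c_2\frac hp\,\|\zetav_\ell\|_2 ,
\]
with no $\widehat c_1$ at all. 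Block $k$ of $\widetilde{\mathsf\Sigma}\zetabv$ then has norm at most $a(\|\zetav_k\|_2+C\|\zetav_\ell\|_2)$, where $C=\max_\ell C_{\delta,\ell}<1$. Hence $\|\widetilde{\mathsf\Sigma}\|_2\le a(1+C)<2\widehat c_2\frac hp$. This implies \eqref{eq:upperbound_Sigmatilde} whenever $x\ge3$. It also has the $p^{-1}$ decay observed in Test \#1, which the detour through $\widehat c_1$ loses.

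\textbf{Your ``main obstacle''.} The paper simply applies \eqref{eq:stimal2_gg} to the discrete extension $u_{\ell,h}$ without comment, so you need the same input it does. Note, however, that the argument of Appendix~\ref{appendix:proof} does not carry over verbatim: it amounts to testing with $u\,\mu_t$, which is not in $V^0_{k,h}$ when $u$ is discrete. In the repaired argument, a uniform constant $C_s$ in place of $C_{\delta,\ell}<1$ would only change $1+C$ into $1+C_s$.
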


\begin{proof}
Knowing that
\begin{equation}\label{eq:thm6_step0}
    \|\widetilde{\mathsf \Sigma}\|_2 = 
    \sqrt{\lambda_{max}(\widetilde{\mathsf \Sigma}^T\widetilde{\mathsf \Sigma})} =
    \sqrt{\sup_{\mathsf 0\neq \zetabv \in\mathbb R^n} \frac{\zetabv^T\widetilde{\mathsf \Sigma}^T\widetilde{\mathsf \Sigma}\,\zetabv}{\zetabv^T\zetabv}} \, ,
\end{equation}
we start by estimating $\zetabv^T\widetilde{\mathsf \Sigma}^T\widetilde{\mathsf \Sigma}\,\zetabv$.

By definition \eqref{eq:schur_2_mat_rhs_tilde} of the matrix $\widetilde{\mathsf \Sigma}$, there holds $\widetilde{\mathsf \Sigma} \zetabv = \mathsf{M}_\Gamma \boldsymbol{d}$ with
\begin{equation*}
    \boldsymbol{d} = 
    \begin{bmatrix}
    \boldsymbol{d}_1 \\ \boldsymbol{d}_2    
    \end{bmatrix} =
    \begin{bmatrix}
        \zetabv_1 - \boldsymbol{t}_{12} \\
        \zetabv_2 - \boldsymbol{t}_{21}
    \end{bmatrix}
    \quad \text{and} \quad
    \boldsymbol{t}_{k\ell} = - \mathsf{R}_{\Gamma_k \ell}\, \mathsf{A}_{\ell\ell}^{-1} \, \mathsf{A}_{\ell\Gamma_\ell} \, \zetabv_\ell
    \quad \text{for} \quad
    k,\ell=1,2, \; k\not= \ell\,,
\end{equation*}
and where we remark that the vector $\boldsymbol{t}_{k\ell}$ is the algebraic counterpart of $u_{\ell,h\vert\Gamma_k}$.
Then,
\begin{equation*}
\zetabv^T \widetilde{\mathsf{\Sigma}}^T \widetilde{\mathsf{\Sigma}}\zetabv
= \boldsymbol{d}^T \mathsf{M}_\Gamma^2 \boldsymbol{d} 
= \boldsymbol{d}_1^T \mathsf{M}_{\Gamma_1}^2 \boldsymbol{d}_1 
+ \boldsymbol{d}_2^T \mathsf{M}_{\Gamma_2}^2 \boldsymbol{d}_2 \, .
\end{equation*}
Since $\mathsf M_{\Gamma_k}$ is symmetric positive definite, we can introduce its square root $M^{1/2}_{\Gamma_k}$ \cite[Sect. 4.4.2]{chqz06}. Thus, as a consequence of Lemma \ref{lemma:eig_mass}, it holds
\begin{equation*}
    \boldsymbol{d}_k^T \mathsf{M}_{\Gamma_k}^2 \boldsymbol{d}_k =(\mathsf M^{1/2}_{\Gamma_k}\boldsymbol d_k)^T \mathsf M_{\Gamma_k}(\mathsf M^{1/2}_{\Gamma_k}\boldsymbol d_k)\leq \widehat c_2\frac{h}{p}\boldsymbol d_k^T\mathsf M_{\Gamma_k}\boldsymbol d_k
    \leq {\widehat{c}_2}^{\;2} \, \frac{h^2}{p^2} \, \boldsymbol{d}_k^T \boldsymbol{d}_k \qquad k=1,2,
\end{equation*}
and, by the triangle inequality, we obtain
\begin{equation}\label{eq:thm6_step1}
    \zetabv^T \widetilde{\mathsf{\Sigma}}^T \widetilde{\mathsf{\Sigma}}\zetabv
    \leq \widehat{c}_2^{\;2} \, \frac{h^2}{p^2} \,
         \left(
          \| \zetabv_1\|_2^2 +\| \boldsymbol{t}_{12}\|_2^2  +
          \| \zetabv_2\|_2^2 +\| \boldsymbol{t}_{21}\|_2^2 
         \right)\,,
\end{equation}
where $\|\boldsymbol d\|_2=\boldsymbol d^T \boldsymbol d$ is the Euclidean norm.\\
We proceed to estimate $\|\boldsymbol{t}_{k\ell}\|_2^2$ for $k,\ell=1,2$, $k\not= \ell$. Using Lemma \ref{lemma:eig_mass} and the meaning of $\boldsymbol t_{k\ell}$, 
we find
\begin{equation*}
    \|\boldsymbol{t}_{k\ell}\|_2^2 
    \leq 
    \frac{p^2}{\widehat{c}_1 \,h} \boldsymbol{t}_{k\ell}^T \mathsf{M}_{\Gamma_k} \boldsymbol{t}_{k\ell}
    \leq
    \frac{p^2}{\widehat{c}_1 \, h} \| u_{\ell,h} \|_{L^2(\Gamma_k)}^2\,.
\end{equation*}

Using \eqref{eq:stimal2_gg}, we obtain
\begin{equation*}
    \| u_{\ell,h} \|_{L^2(\Gamma_k)}^2
    \leq
    C_{\delta,\ell}^2\,  \| \zeta_{\ell,h} \|_{L^2(\Gamma_\ell)}^2 \, .
\end{equation*}
Moreover, using again Lemma \ref{lemma:eig_mass}, we find
\begin{equation*}
    \| \zeta_{\ell,h} \|_{L^2(\Gamma_\ell)}^2 = \zetabv_\ell^T \, \mathsf{M}_{\Gamma_\ell} \, \zetabv_\ell
    \leq
    \widehat{c}_2 \, \frac{h}{p} \| \zetabv_\ell \|_2^2 \, .
\end{equation*}
Considering that
\begin{equation*}
 C_{\delta,\ell}^2\, < 1 \qquad \forall \delta >0,\ \ell=1,2,
\end{equation*}
we obtain the following bound for the Euclidean norm of $\boldsymbol{t}_{k\ell}$:
\begin{equation}\label{eq:thm6_step2}
    \| \boldsymbol{t}_{k\ell} \|_2^2 \leq \frac{\widehat{c}_2}{\widehat{c}_1} \, p \, \|\zetabv_\ell\|_2^2\,.
\end{equation}
Substituting \eqref{eq:thm6_step2} into \eqref{eq:thm6_step1}, we obtain the estimate
\begin{equation*}
    \zetabv^T \widetilde{\mathsf{\Sigma}}^T\widetilde{\mathsf{\Sigma}}\zetabv
    \leq \widehat{c}_2^{\;2} \, \frac{h^2}{p^2}
    \left(
     \| \zetabv_1\|_2^2 \left( 1 + \frac{\widehat{c}_2}{\widehat{c}_1} \, p \right) + \|\zetabv_2\|_2^2 \left( 1 + \frac{\widehat{c}_2}{\widehat{c}_1} \, p \right) 
    \right) 
    \leq
    h^2 \frac{\widehat{c}_2^{\;2}}{p^2}
    \left( 1 + \frac{\widehat{c}_2}{\widehat{c}_1} p \right) \| \zetabv\|_2^2 \,.
\end{equation*}
Therefore, it holds
\begin{equation*}
    \frac{\zetabv^T \widetilde{\mathsf{\Sigma}}^T\widetilde{\mathsf{\Sigma}}\zetabv}{\zetabv^T\zetabv}
    \leq
    h^2 (\overline{c}(p))^2 \, ,
\end{equation*}
and the thesis follows from \eqref{eq:thm6_step0} with $\overline{c}(p)$ defined as in \eqref{eq:overlineCp}.
\end{proof} 

\medskip

Finally, we can provide a bound for the minimum eigenvalue $\lambda_{min}(\widetilde{\mathsf \Sigma}_s)$ of the symmetric part of the Schur complement matrix $\widetilde{\mathsf \Sigma}$.

\begin{theorem}\label{thm:lowerbound_min_eig}
Let Assumptions \ref{ass_mesh}--\ref{ass_conforming} be satisfied, and let $\widetilde{\mathsf \Sigma}_s=\frac{1}{2} (\widetilde{\mathsf \Sigma}+\widetilde{\mathsf\Sigma}^T)$ be the symmetric part of $\widetilde{\mathsf\Sigma}$. The matrix $\widetilde{\mathsf \Sigma}_s$ is positive definite and there holds
\begin{equation}\label{eq:lowerbound_min_eig}
    \lambda_{min}(\widetilde{\mathsf \Sigma}_s)= 
\inf_{\mathsf 0\neq \zetabv \in\mathbb R^n}\frac{\zetabv^T\widetilde{\mathsf \Sigma}_s\,\zetabv}{\zetabv^T\zetabv}\geq h \, \frac{\delta}{c_{LH} + \max_\ell\frac{\|\nu_\ell\|_\infty}{\underline\nu_\ell}\delta} \, \underline{c}(p) > 0 \, ,
\end{equation}
where
\begin{equation*}
    \underline{c}(p) = \frac{\widehat{c}_1}{2\,p^2} \, ,
\end{equation*}
the constant $\widehat{c}_1$ is defined in Lemma \ref{lemma:eig_mass} and $c_{LH}=\max\{c_{L,1}\,c_{H,1},\,  c_{L,2}\,c_{H,2}\}$.
\end{theorem}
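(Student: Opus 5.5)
Since $\zetabv^T\widetilde{\mathsf\Sigma}_s\zetabv=\zetabv^T\widetilde{\mathsf\Sigma}\zetabv$, I would start from the variational characterization \eqref{def:char_Sigma_2}. Writing $a_k=\|\zeta_{k,h}\|_{L^2(\Gamma_k)}$ and using $u_{k,h}^{\zeta_k}=\zeta_{k,h}$ on $\Gamma_k$, it reads
\[
\zetabv^T\widetilde{\mathsf\Sigma}\zetabv = a_1^2+a_2^2-\int_{\Gamma_1}u_{2,h}^{\zeta_2}\zeta_{1,h}-\int_{\Gamma_2}u_{1,h}^{\zeta_1}\zeta_{2,h}.
\]
I would bound the two cross terms by Cauchy--Schwarz. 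Then I would apply Corollary \ref{cor:gamma_gamma} to the discrete harmonic extensions, as is done in the proof of Theorem \ref{thm:upperbound_max_eig}: $\|u_{\ell,h}^{\zeta_\ell}\|_{L^2(\Gamma_k)}\le C_{\delta,\ell}\,a_\ell$. With $C_\delta:=\max_\ell C_{\delta,\ell}<1$ this gives
\[
\zetabv^T\widetilde{\mathsf\Sigma}\zetabv\ge a_1^2+a_2^2-2C_\delta a_1a_2\ge (1-C_\delta)(a_1^2+a_2^2),
\]
using $2a_1a_2\le a_1^2+a_2^2$. Since $C_\delta<1$ and $a_1^2+a_2^2>0$ for $\zetabv\neq\zerov$, this already proves that $\widetilde{\mathsf\Sigma}_s$ is positive definite.

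Next I would make $1-C_\delta$ explicit. From \eqref{eq:stimal2_gg}, $C_{\delta,\ell}^2=1-x_\ell$ with
\[
x_\ell=\frac{\delta}{\delta\,\|\nu_\ell\|_\infty/\underline\nu_\ell+c_{L,\ell}c_{H,\ell}}\ \ge\ x:=\frac{\delta}{c_{LH}+\max_\ell\frac{\|\nu_\ell\|_\infty}{\underline\nu_\ell}\delta}.
\]
Then $1-C_{\delta}=1-\sqrt{1-x}\ge x/2$, because $\sqrt{1-x}\le 1-x/2$. The factor $1/2$ here is exactly the $1/2$ appearing in $\underline c(p)$.

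Finally I would return to Euclidean norms through Lemma \ref{lemma:eig_mass}: $a_k^2=\zetav_k^T\mathsf M_{\Gamma_k}\zetav_k\ge \widehat c_1\frac{h}{p^2}\|\zetav_k\|_2^2$. Summing over $k$ gives
\[
\frac{\zetabv^T\widetilde{\mathsf\Sigma}_s\zetabv}{\zetabv^T\zetabv}\ge \frac{x}{2}\,\widehat c_1\frac{h}{p^2},
\]
which is \eqref{eq:lowerbound_min_eig}.

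The delicate point is the use of Corollary \ref{cor:gamma_gamma} for \emph{discrete} harmonic extensions rather than continuous ones. Theorem \ref{thm:ls} is stated for the exact harmonic extension, so the constant transfers only if the estimate holds with the same constant for the Galerkin extension, or up to a harmless modification. I would follow the paper's practice in Theorem \ref{thm:upperbound_max_eig} and take that transfer as given. The remaining steps are the elementary inequalities above, so this step is where any gap would lie.
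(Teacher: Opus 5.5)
Your argument is correct and is essentially the paper's proof. Both use Cauchy--Schwarz with Corollary~\ref{cor:gamma_gamma} on the two cross terms, the elementary bound $1-C_{\delta,\ell}\ge x_\ell/2$, and the lower bound of Lemma~\ref{lemma:eig_mass}. The paper calls $1-C_{\delta,\ell}\ge x_\ell/2$ ``straightforward''; you justify it via $\sqrt{1-x}\le 1-x/2$.

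The only cosmetic difference is how the two constants are combined. You replace both $C_{\delta,\ell}$ by their maximum. The paper keeps $1-\tfrac12 C_{\delta,1}-\tfrac12 C_{\delta,2}$ and averages the two bounds. Both routes give the same constant.

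The paper likewise applies Corollary~\ref{cor:gamma_gamma} to the discrete harmonic extensions without comment. The transfer you flag is therefore an implicit assumption of the original proof too, not an extra gap in yours.
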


\begin{proof}
By Cauchy--Schwarz inequality, Corollary \ref{cor:gamma_gamma}, and by the algebraic inequality $ab\leq (a^2+b^2)/2$, we have
\begin{eqnarray}
    \displaystyle\int_{\Gamma_1} \left(u_{2,h}^{\zeta_2}{}_{|\Gamma_1}\right)\zeta_{1,h} 
    &\leq& \|u_{2,h}^{\zeta_2}\|_{L^2(\Gamma_1)} \|\zeta_{1,h}\|_{L^2(\Gamma_1)} \label{eq:stima_mista} \\
    &\leq& 
    C_{\delta, 2}\, \|\zeta_{2,h}\|_{L^2(\Gamma_2)}  \|\zeta_{1,h}\|_{L^2(\Gamma_1)} \nonumber \\
    &\leq& \displaystyle \frac{1}{2} 
    C_{\delta, 2}\, \left( \|\zeta_{1,h}\|^2_{L^2(\Gamma_1)}+ \|\zeta_{2,h}\|^2_{L^2(\Gamma_2)} \right). \nonumber
\end{eqnarray}
Similarly, 
\begin{eqnarray*}
\int_{\Gamma_2}
\left(u_{1,h}^{\zeta_1}{}_{|\Gamma_2}\right)\zeta_{2,h}
\leq \frac{1}{2} 
C_{\delta,1}\, \left( \|\zeta_{1,h}\|^2_{L^2(\Gamma_1)}+ \|\zeta_{2,h}\|^2_{L^2(\Gamma_2)} \right).
\end{eqnarray*}
Then, thanks to (\ref{def:char_Sigma_2}),
\begin{eqnarray*}
\begin{array}{ll}
\zetabv^T \widetilde{\mathsf\Sigma} \, \zetabv & 
\displaystyle = \int_{\Gamma_1} (u_{1,h}^{\zeta_1}-u_{2,h}^{\zeta_2})\zeta_{1,h} + \int_{\Gamma_2} (u_{2,h}^{\zeta_2}- u_{1,h}^{\zeta_1})\zeta_{2,h}\\[2mm] 
&\displaystyle = \|\zeta_{1,h}\|^2_{L^2(\Gamma_1)}+ \|\zeta_{2,h}\|^2_{L^2(\Gamma_2)} -\int_{\Gamma_1}u_{2,h}^{\zeta_2}\,\zeta_{1,h}
- \int_{\Gamma_2} u_{1,h}^{\zeta_1}\,\zeta_{2,h} \\[2mm]
&  \geq \displaystyle \left(1
-\frac{1}{2}C_{\delta,1}-\frac{1}{2}C_{\delta,2}\, \, \right)
(\|\zeta_{1,h}\|^2_{L^2(\Gamma_1)}+ \|\zeta_{2,h}\|^2_{L^2(\Gamma_2)}).
\end{array}
\end{eqnarray*}
It is straightforward to show that for $k=1,2$
\begin{equation}\label{eq:step3}
    1-C_{\delta,k}\, 
    \geq 
    \frac{\delta}{2(c_{L,k}\,c_{H,k}+\frac{\|\nu_k\|_\infty}{\underline\nu_k}\delta)}
    \geq \frac{\delta}{2(c_{LH}+\max_\ell\frac{\|\nu_\ell\|_\infty}{\underline\nu_\ell}\delta)},
\end{equation}
so that,
\begin{equation}\label{eq:below_Sigma_Gamma}
\zetabv^T \widetilde{\mathsf\Sigma} \, \zetabv \geq \frac{\delta}{2(c_{LH}+\max_\ell\frac{\|\nu_\ell\|_\infty}{\underline\nu_\ell}\delta)} \,
\zetabv^T  \mathsf M_{\Gamma} \, \zetabv.
\end{equation}
Since
\begin{equation*}
    \zetabv^T \widetilde{\mathsf\Sigma}_s \, \zetabv=\zetabv^T \widetilde{\mathsf\Sigma} \, \zetabv   \qquad \forall \zetabv \in \mathbb R^n,
\end{equation*}
and applying (\ref{eq:below_Sigma_Gamma}) and \eqref{eq:eig_mass_general}, there holds
\begin{eqnarray*}
    \frac{\zetabv^T\widetilde{\mathsf \Sigma}_s\, \zetabv}{\zetabv^T\zetabv} =
    \frac{\zetabv^T\widetilde{\mathsf \Sigma}\,\zetabv}{\zetabv^T\mathsf M_\Gamma\, \zetabv}\,
    \frac{\zetabv^T\mathsf M_\Gamma\,\zetabv}{\zetabv^T\zetabv} \geq \frac{\widehat{c}_1\,\delta}{2\, (c_{LH}+\max_\ell\frac{\|\nu_\ell\|_\infty}{\underline\nu_\ell}\delta)} \,\frac{h}{p^2}.
\end{eqnarray*}
We finally conclude that
\begin{equation*}
 \inf_{\mathsf 0\neq \zetabv\in \mathbb R^n}
\frac{\zetabv^T \widetilde {\mathsf \Sigma}_s \, \zetabv}{\zetabv^T\zetabv}\geq
\frac{\widehat{c}_1\,\delta}{2\, (c_{LH}+\max_\ell\frac{\|\nu_\ell\|_\infty}{\underline\nu_\ell}\delta)} \,\frac{h}{p^2}.
\end{equation*}
\end{proof}

As a consequence of Theorem \ref{thm:lowerbound_min_eig}, we can conclude that the weak Schur complement matrix $\widetilde{\mathsf \Sigma}$ is positive real according to the following Definition \ref{def:A_real_posdef}.

\begin{definition}\label{def:A_real_posdef}
\cite{saad_schultz} A real matrix $\mathsf A\in\mathbb R^{n\times n}$ is said \emph{positive real} if its symmetric part $\mathsf A_s=\frac{1}{2} (\mathsf A+\mathsf A^T)$ is positive definite.
\end{definition}

\subsection{Numerical verification of the theoretical estimates}\label{sec:numericalValidation}

In this section, we numerically verify  the estimates of Theorems \ref{thm:upperbound_max_eig} and \ref{thm:lowerbound_min_eig}. We consider the operator $L$ as in \eqref{eq:operatorL} in the domain $\Omega=(0,2)\times(0,1)$ and homogeneous Dirichlet conditions on the boundary $\partial\Omega$ and with different choices for the physical parameters $\nu$ and $\gamma$. In Sect. \ref{sec:numericalAnalysisConforming}, we first consider the case of conforming discretizations in the overlap studied in Sect. \ref{sec:analysisSchur}, for which we set $\Omega_1=(0,1+\delta/2)\times(0,1)$ and $\Omega_2=(1-\delta/2,2)\times(0,1)$, where $\delta$ will be specified later. In Sect. \ref{sec:numericalAnalysisNonConforming}, we explore numerically the case of non-conforming discretizations using a different decomposition as specified later.

\subsubsection{Numerical tests for conforming discretizations}\label{sec:numericalAnalysisConforming}


\paragraph{Test \#1. Continuous coefficients and conforming discretizations on the overlap.} 
We consider the differential operator \eqref{eq:operatorL} with $\nu=\gamma=1$ 
and three conforming discretizations: FEM $\mathbb P_1-\mathbb P_1$, FEM $\mathbb P_3-\mathbb P_3$, and SEM $\mathbb Q_p-\mathbb Q_p$, that we name \emph{Test 1A}, \emph{Test 1B}, and \emph{Test 1C}, respectively. 
We introduce structured meshes and denote by $h_x$ and $h_y$ the lengths of the element sides along the $x$ and $y$ directions, respectively. 

First, we want to assess the behavior of $\|\widetilde{\mathsf \Sigma}\|_2$ and $\lambda_{min}(\widetilde{\mathsf \Sigma}_s)$  versus the overlap width $\delta$. In Tests 1A and 1B, the mesh is obtained by taking $h_y=0.04$ and 
\begin{eqnarray*}
    h_x=\left\{\begin{array}{ll}
    0.04 & \mbox{in the subregions with }x\in (0,0.8)\cup(1.2,2)\\
    0.02 & \mbox{in the subregions with }x\in (0.8,0.94)\cup(1.08,1.2)\\
    0.002 & \mbox{in the subregion with }x\in (0.94,1.08),
    \end{array}\right.
\end{eqnarray*}
while in Test 1C, $p=6$, $h_y=0.1$, and 
\begin{eqnarray*}
    h_x=\left\{\begin{array}{ll}
    \frac{1-\delta/2}{10} & \mbox{in }\Omega\setminus (\Omega_1 \cap \Omega_2) \\
    \delta & \mbox{in } \Omega_1 \cap \Omega_2.
    \end{array}\right.
\end{eqnarray*}
Numerical results shown in the top row of Fig. \ref{fig:test1} confirm the theoretical estimates of Theorems \ref{thm:upperbound_max_eig} and \ref{thm:lowerbound_min_eig}, that is, $\|\widetilde{\mathsf \Sigma}\|_2$ is bounded independently of $\delta$, while $\lambda_{min}(\widetilde{\mathsf \Sigma}_s)=\mathcal O(\delta)$ when $\delta\to 0$.
\begin{figure}
    \centering
    \includegraphics[width=\linewidth]{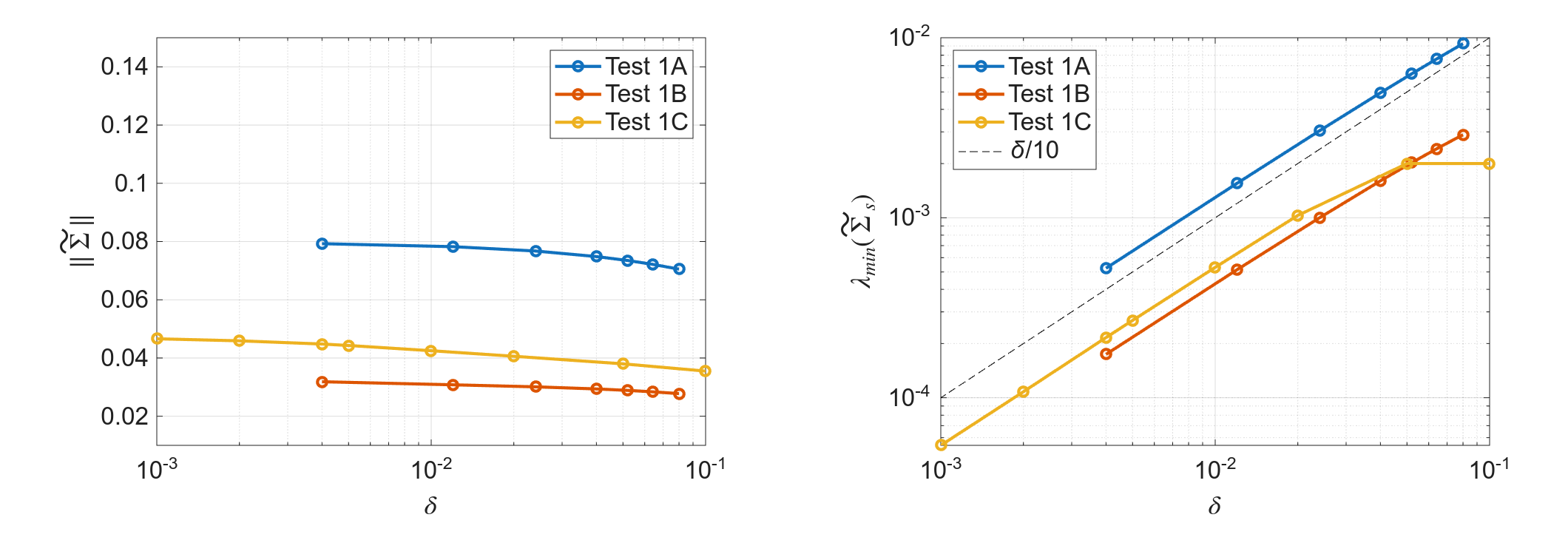}
    \includegraphics[width=\linewidth]{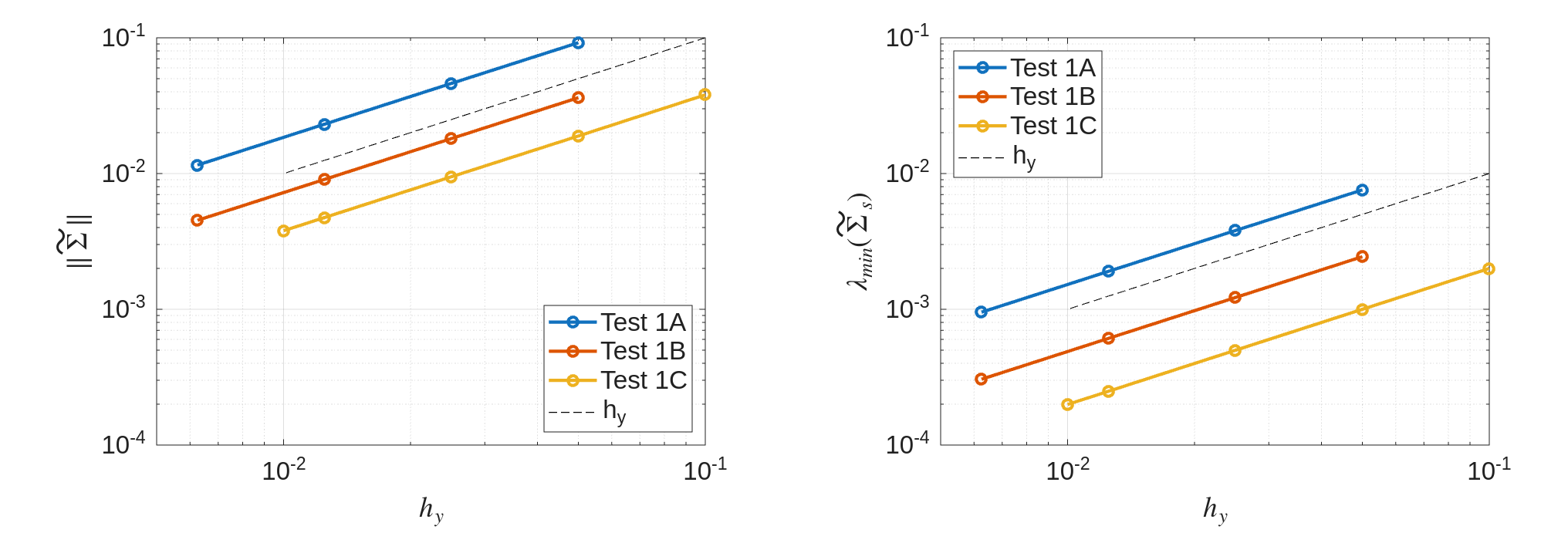}
    \includegraphics[width=\linewidth]{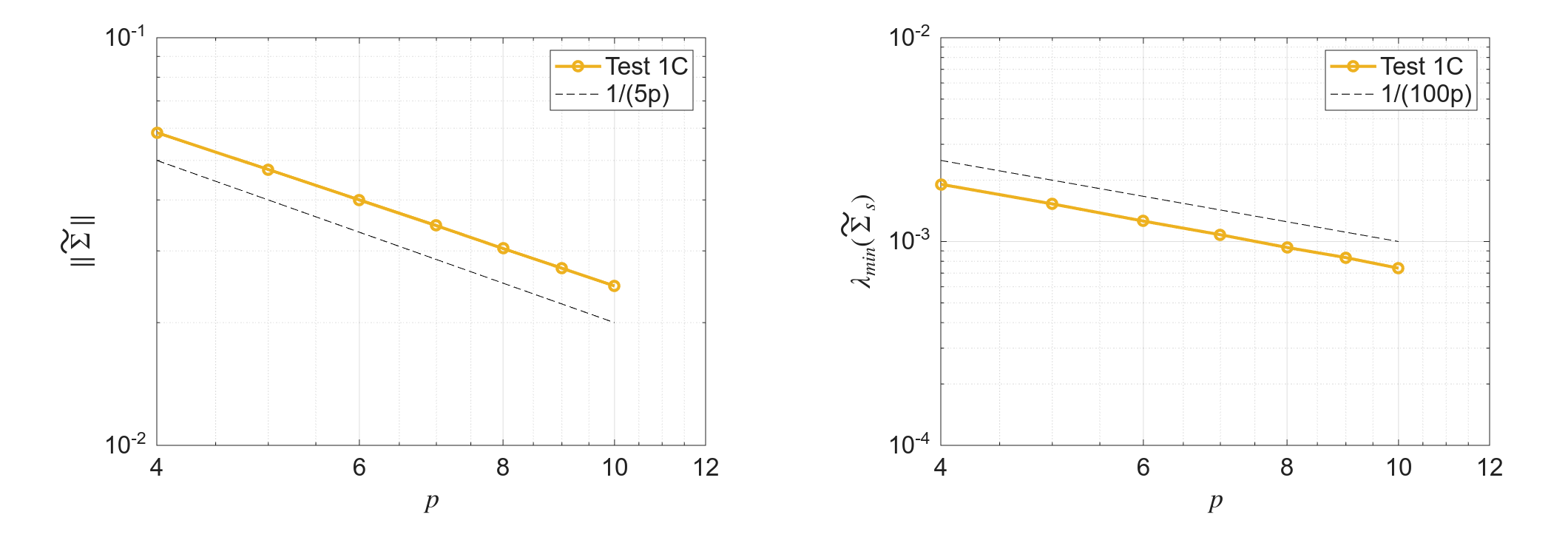}
    \caption{\emph{Test \#1.} Conforming discretizations on the overlap and continuous coefficients. $\|\widetilde{\mathsf \Sigma}\|_2$ (left) and $\lambda_{min}(\widetilde{\mathsf \Sigma}_s)$ (right) versus the overlap width $\delta$ (top), the mesh-size $h_y$ (center), and the polynomial degree $p$ (bottom). Test 1A, Test 1B, and Test 1C refer to FEM $\mathbb P_1-\mathbb P_1$, FEM $\mathbb P_3-\mathbb P_3$, and SEM $\mathbb Q_p-\mathbb Q_p$ discretizations, respectively. In the analysis versus $\delta$ and $h_y$ (top two rows), we set $p=6$ for Test 1C.}
    \label{fig:test1}
\end{figure}

Then, we evaluate $\|\widetilde{\mathsf \Sigma}\|_2$ and $\lambda_{min}(\widetilde{\mathsf \Sigma}_s)$ versus the mesh size. We note that both quantities are independent of $h_x$, so we do not report graphs in this case. Instead, when $h_y$ varies, both $\|\widetilde{\mathsf \Sigma}\|_2$ and $\lambda_{min}(\widetilde{\mathsf \Sigma}_s)$ behave proportionally to $h_y$, see Fig. \ref{fig:test1}, center row. This validates the theoretical estimates of Theorems \ref{thm:upperbound_max_eig} and \ref{thm:lowerbound_min_eig}.
In the latter tests, for Test 1A and Test 1B, we have considered $\delta=0.05$ and $h_x=0.025$, while for Test 1C, $\delta=0.05$, $p=6$, and $h_x \approx 0.05$. 

Finally, in Fig. \ref{fig:test1}, bottom, we plot $\|\widetilde{\mathsf \Sigma}\|_2$ and $\lambda_{min}(\widetilde{\mathsf \Sigma}_s)$ versus the polynomial degree $p$ for Test 1C. In this case, we have considered $\delta=0.025$, $h_y=0.1$, $h_x=(1-\delta/2)/10$ outside the overlap region, and $h_x=\delta$ in the overlap region.
Numerical results confirm Theorems \ref{thm:upperbound_max_eig} and \ref{thm:lowerbound_min_eig} also in this case, although in a less stringent way. Indeed, from (\ref{eq:upperbound_Sigmatilde})--(\ref{eq:overlineCp}), we have $\|\widetilde{\mathsf \Sigma}\|_2\leq h\overline c(p)$ with $\overline c(p)=\mathcal O(p^{-1/2})$ when $p\to\infty$. Numerical results show that $\|\widetilde{\mathsf \Sigma}\|_2\sim p^{-1}\leq p^{-1/2}$.
As concerns $\lambda_{min}(\widetilde{\mathsf \Sigma}_s)$, we obtain  $\lambda_{min}(\widetilde{\mathsf \Sigma}_s)\sim \frac{c}{p}\geq \frac{c}{p^2}$.

\medskip

\paragraph{Test \#2. Discontinuous coefficient $\nu$ and conforming discretizations on the overlap.}  
In this test, we consider $\gamma = 1$ and
\begin{eqnarray}\label{eq:nu_disc}
\nu=\left\{\begin{array}{ll}
1 & x\leq 1,\\
10^{\alpha} & x>1, \quad \text{with } \alpha\in\{0,\pm 1,\pm 2,\pm 3\},
\end{array}\right.
\end{eqnarray}
and we consider the same discretizations as in Tests 1A and 1C.

\smallskip

In the analysis versus $\delta$, we consider FEM $\mathbb{P}_1 - \mathbb{P}_1$ like in Test 1A. The graphs in the top row of Fig. \ref{fig:test2} show that $\|\widetilde{\mathsf \Sigma}\|_2$ remains bounded even though it mildly depends on the ratio $\|\nu\|_\infty/\underline{\nu}$. Regarding $\lambda_{min}(\widetilde{\mathsf \Sigma}_s)$, we see that the larger the ratio $\|\nu\|_\infty/\underline{\nu}$ the weaker the dependence on $\delta$.  
As we will see in Sect. \ref{sec:numerical_results} when analyzing GMRES iterations, the behavior of $\lambda_{min}(\widetilde{\mathsf \Sigma}_s)$ positively impacts the convergence of ICDD when the coefficient $\nu$ has strong variations. More precisely, the number of ICDD iterations turns out to be mildly dependent on $\delta$.

\smallskip

In the analysis versus $h_y$, we consider the same configuration as in Test 1A.
The graphs in the middle row of Fig. \ref{fig:test2} show that $\|\widetilde{\mathsf \Sigma}\|_2$ mildly depends on $h$. As concerns $\lambda_{min}(\widetilde{\mathsf \Sigma}_s)$, the larger the ratio $\|\nu\|_\infty/\underline{\nu}$, the larger $\lambda_{min}(\widetilde{\mathsf \Sigma}_s)$. Moreover, when $\alpha=\pm 3$, $\lambda_{min}(\widetilde{\mathsf \Sigma}_s)$ depends on $\delta$ more weakly than in the other cases.

\smallskip

Finally, in the analysis versus $p$, we consider SEM $\mathbb Q_p$ discretization as in Test 1C, with $h_x=\delta=0.025$, in the overlap, $h_x=(1-\frac{\delta}{2})/10$, outside the overlap, and $h_y=0.1$.
The graphs in the bottom row of Fig. \ref{fig:test2} show that $\|\widetilde{\mathsf \Sigma}\|_2$ mildly depends on $\|\nu\|_\infty/\underline{\nu}$ and behaves like $p^{-1}$, while
$\lambda_{min}(\widetilde{\mathsf \Sigma}_s)$ behaves like $p^{-2}$ when 
$\nu$ is discontinuous, confirming the theoretical estimate of Theorem \ref{thm:lowerbound_min_eig}. 

\begin{figure}
    \centering
    \includegraphics[width=\linewidth]{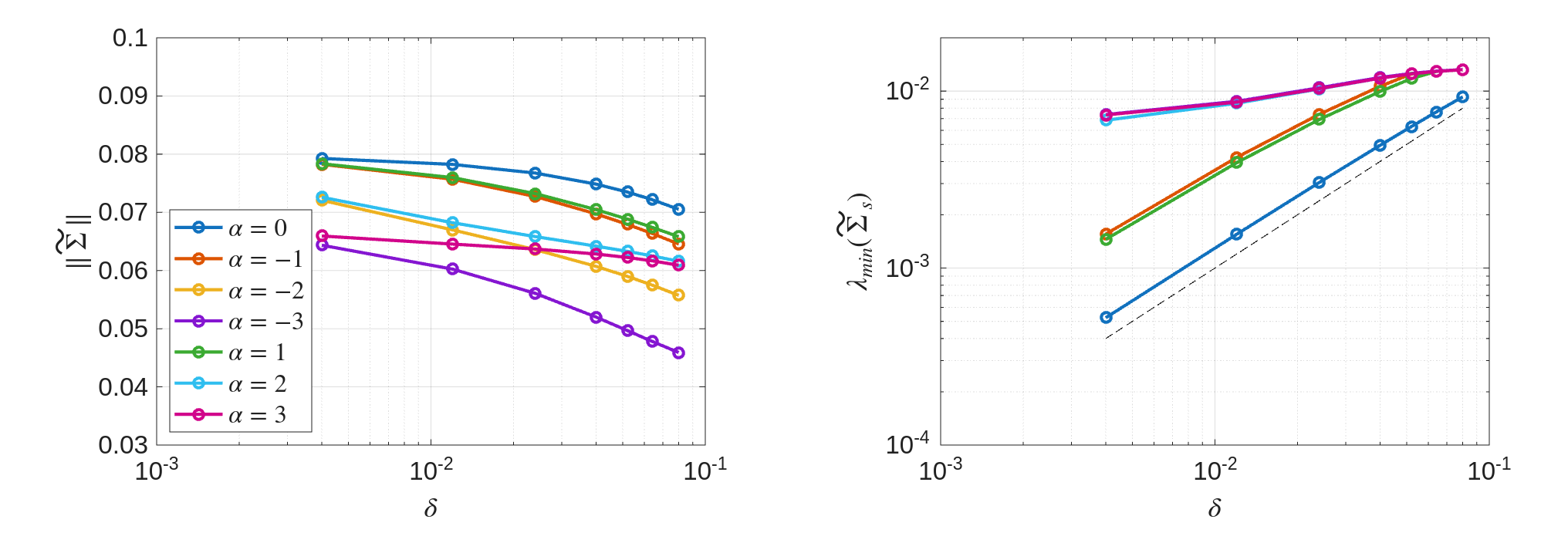}
    \includegraphics[width=\linewidth]{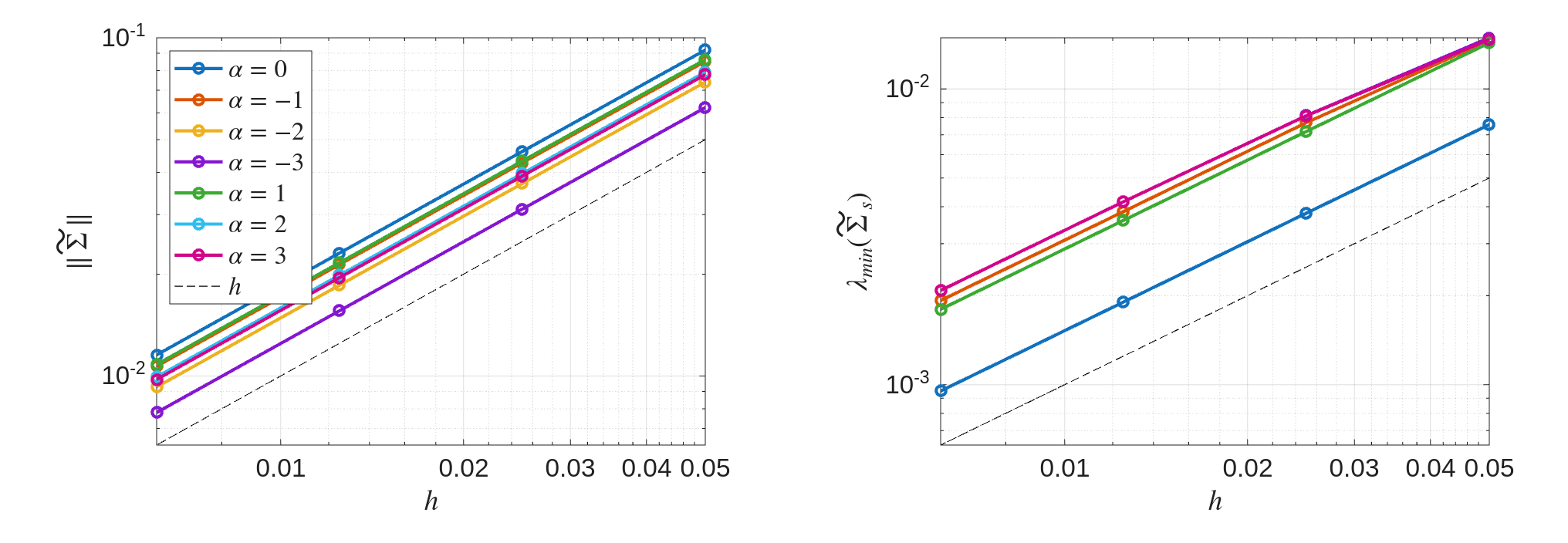}
    \includegraphics[width=\linewidth]{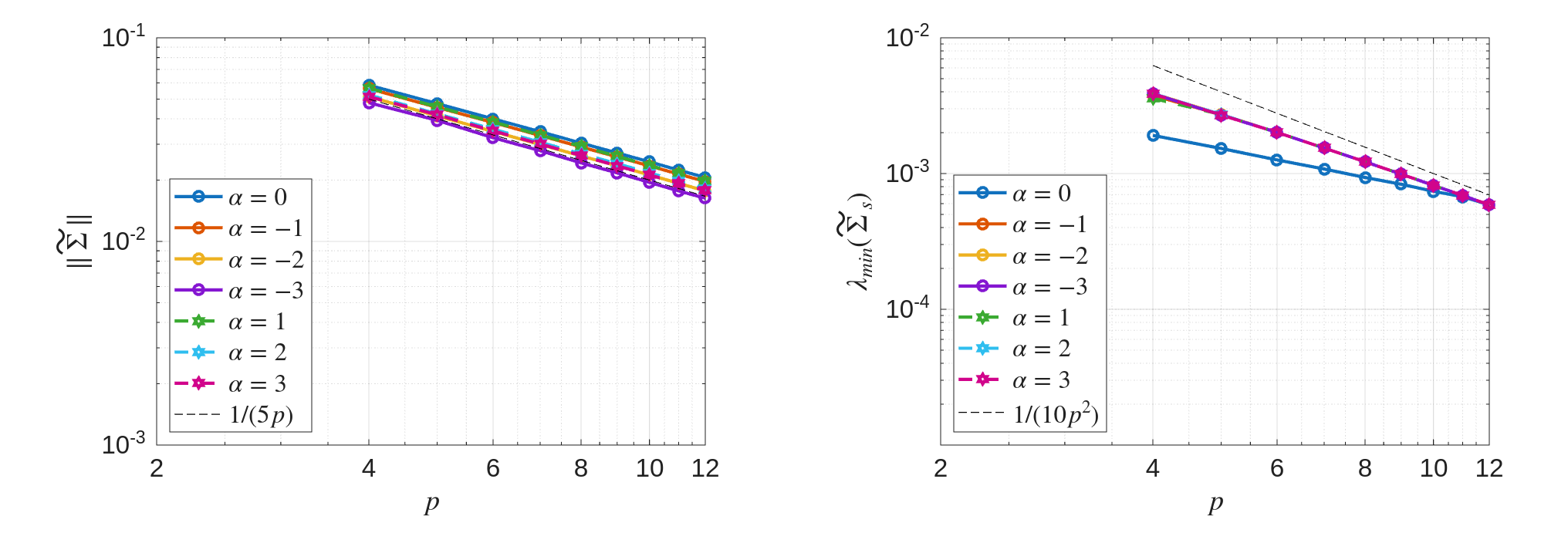}
    \caption{\emph{Test \#2.} Discontinuous coefficient $\nu$ (see (\ref{eq:nu_disc})) and conforming discretizations on the overlap.  $\|\widetilde{\mathsf \Sigma}\|_2$ (left) and $\lambda_{min}(\widetilde{\mathsf \Sigma}_s)$ (right) versus the overlap width $\delta$ (top), the mesh size $h$ (middle), and the polynomial degree $p$ (bottom).}
    \label{fig:test2}
\end{figure}

\medskip

\paragraph{Test \#3. Discontinuous coefficient $\gamma$ and conforming discretizations on the overlap.} In this test, $\nu = 1$ and
\begin{eqnarray}\label{eq:gamma_disc}
\gamma=\left\{\begin{array}{ll}
1 & x\leq 1,\\
10^{\alpha} & x>1, \quad \text{with } \alpha\in\{0,\pm 1,\pm 2, \pm 3\},
\end{array}\right.
\end{eqnarray}
and we use the same discretization of Test 1C (SEM $\mathbb Q_p$ in both subdomains).

In Fig. \ref{fig:test3}, we show $\|\widetilde{\mathsf \Sigma}\|_2$ and 
$\lambda_{min}(\widetilde{\mathsf \Sigma}_s)$ versus the overlap width $\delta$ (top), versus $h$ (middle), and versus $p$ (bottom). They almost behave as in Test \#2, with $\lambda_{min}(\widetilde{\mathsf \Sigma}_s)$ less sensitive to the ratio $\|\gamma\|_\infty/\underline{\gamma}$ than to $\|\nu\|_\infty/\underline{\nu}$.
 
\begin{figure}
    \centering
    \includegraphics[width=\linewidth]{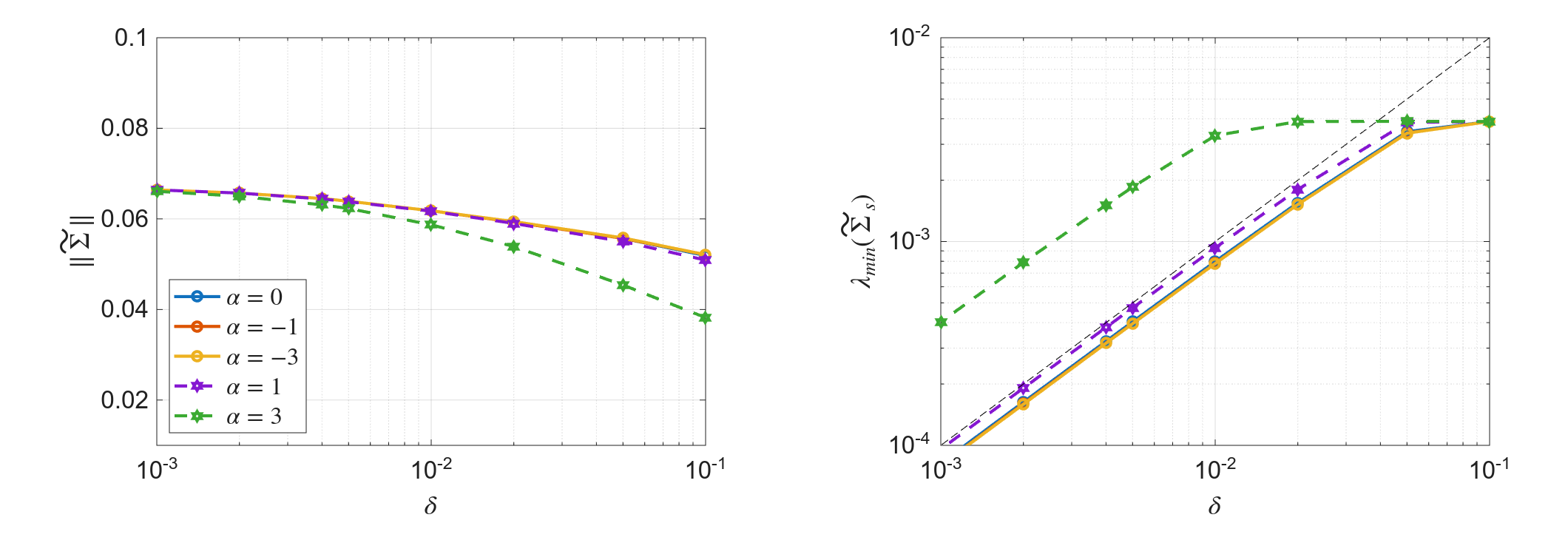}
    \includegraphics[width=\linewidth]{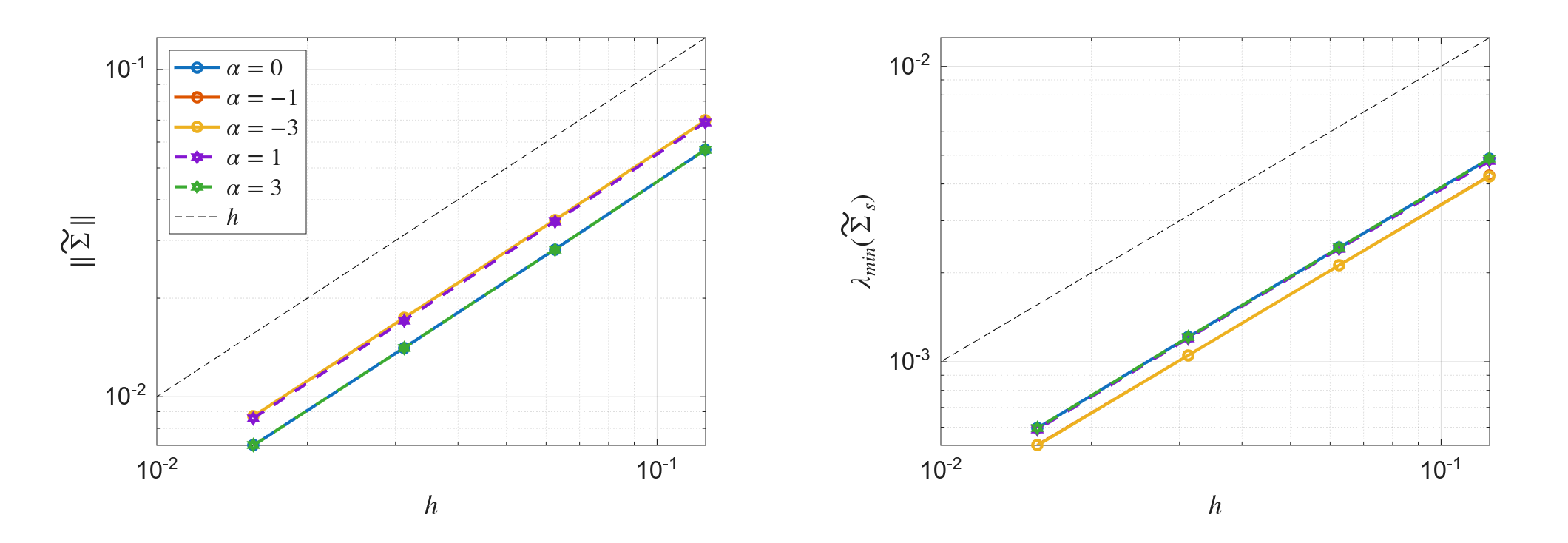}
    \includegraphics[width=\linewidth]{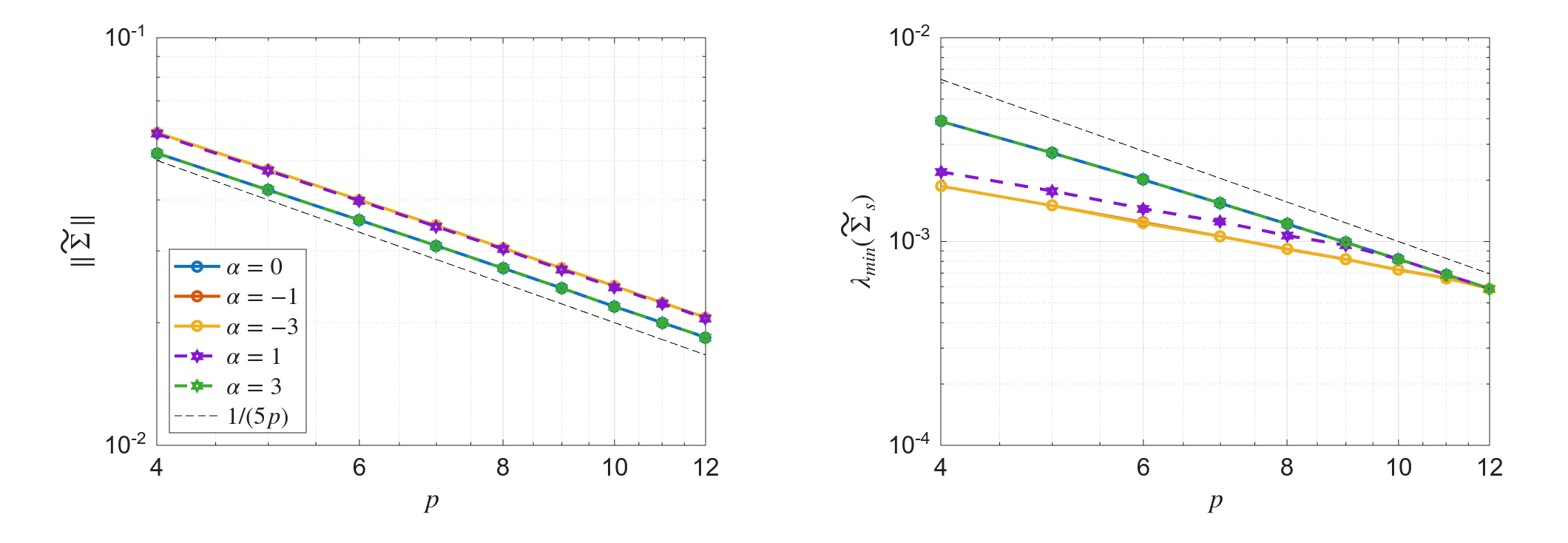}
    \caption{\emph{Test \#3.} Discontinuous coefficient $\gamma$ (see (\ref{eq:gamma_disc})) and conforming discretizations on the overlap.  $\|\widetilde{\mathsf \Sigma}\|_2$ (left) and $\lambda_{min}(\widetilde{\mathsf \Sigma}_s)$ (right) versus the overlap width $\delta$ (top), the mesh-size $h$ (center), the polynomial degree $p$ (bottom). In the analysis versus $\delta$ and $h$ (top two rows), we set $p=4$.}
    \label{fig:test3}
\end{figure}

\subsubsection{Numerical tests for non-conforming discretizations in the overlap}\label{sec:numericalAnalysisNonConforming}

Assumption \ref{ass_conforming} is crucial to guarantee that $\widetilde{\mathsf{\Sigma}}$ is positive real. Indeed, while for non-conforming discretizations in the overlap one can still prove a bound similar to \eqref{eq:upperbound_Sigmatilde} for the norm $\|\widetilde{\mathsf{\Sigma}}\|_2$, it is no longer possible to guarantee that $\lambda_{min} ( \widetilde{\mathsf{\Sigma}}_s ) > 0$ as we explain in Appendix \ref{app:lambdamin}.

However, although essential for the analysis, as we saw in Sect. \ref{sec:discrete} mesh conformity is not needed to set up the ICDD method that can easily accommodate non-conforming meshes through suitable interpolation operators (or intergrid matrices at the algebraic level).

Therefore, for the sake of completeness, in this section, we show the behavior of $\|\widetilde{\mathsf{\Sigma}}\|_2$ and $\lambda_{min} (\widetilde{\mathsf{\Sigma}}_s )$ for some non-conforming settings. This numerical investigation will be useful to interpret the corresponding convergence results in Sect. \ref{sec:numerical_results}.

All tests presented in this section consider the differential operator \eqref{eq:operatorL} with $\nu = \gamma = 1$ and homogeneous Dirichlet boundary conditions in the domain $\Omega=(0,2)\times(0,1)$.

\medskip

\paragraph{Test \#4. Continuous coefficient, non-conforming discretization on the overlap.} 
Different types of non-conformity are considered.

\smallskip

In \emph{Test 4A}, we set $\Omega_1=(0,1+\delta/2)\times(0,1)$ and $\Omega_2=(1-\delta/2,2)\times(0,1)$ with $\delta$ that will be specified later. Then we consider two geometrically conforming FEM discretizations, but with different polynomial degrees: $\mathbb P_1$ in $\Omega_1$ and $\mathbb P_2$ in $\Omega_2$. The mesh sizes $h_x$ and $h_y$ are those used in Test 1A.

\smallskip

In \emph{Test 4B}, we set $\Omega_1=(0,1+\delta)\times(0,1)$, $\Omega_2=(1,2)\times(0,1)$, and we consider FEM $\mathbb Q_1$ in $\Omega_1$ and $\mathbb P_1$ in $\Omega_2$, with uniform $h$ in both subdomains. We take $h\approx 0.025$ when we analyze the behavior versus $\delta$, and $\delta=0.025$ for the analysis versus $h$.

\smallskip

In \emph{Test 4C}, we consider the same domain decomposition as in Test 4B, but with $\mathbb Q_2$ in $\Omega_1$. As in Test 4B, we take $\mathbb P_1$ elements in $\Omega_2$, uniform $h$ in both subdomains, and we set $h\approx 0.025$ for the analysis versus $\delta$, and $\delta=0.025$ for the analysis versus $h$.

\smallskip

Finally, in \emph{Test 4D}, we take $\Omega_1=(0,1+\delta/2)\times(0,1)$, $\Omega_2=(1-\delta/2,2)\times(0,1)$, and SEM $\mathbb Q_p$ in $\Omega_1$ and $\mathbb Q_{p+1}$ in $\Omega_2$, with uniform meshes in both subdomains, but with different mesh sizes $h_1$ and $h_2$.
In the analysis versus $\delta$, we consider SEM $\mathbb Q_4$ with $h_1=1/10$ in $\Omega_1$ and $\mathbb Q_5$ with $h_2=1/12$ in $\Omega_2$.
For the analysis versus $h$, we consider $\delta=0.05$, SEM $\mathbb Q_4$ with $h_1=1/ne$ in $\Omega_1$ and $\mathbb Q_5$ with $h_2=1/(ne+2)$ in $\Omega_2$, and varying $ne$.
Finally, for the analysis versus $p$, we consider $\delta=0.025$, SEM $\mathbb Q_p$ with $h_1=1/10$ in $\Omega_1$ and $\mathbb Q_{p+1}$ with $h_2=1/12$ in $\Omega_2$.

\smallskip

Note that both geometries and discretizations are non-conforming in Tests 4B, 4C, and 4D.

\smallskip 

In Fig. \ref{fig:test4} top, we show the behavior of $\|\widetilde{\mathsf \Sigma}\|_2$ and 
$\lambda_{min}(\widetilde{\mathsf \Sigma}_s)$ versus the overlap width $\delta$. Even though Theorems \ref{thm:upperbound_max_eig} and \ref{thm:lowerbound_min_eig} are proved only in the conforming setting, numerical results show that $\|\widetilde{\mathsf \Sigma}\|_2$ is bounded independently of $\delta$ and $\lambda_{min}(\widetilde{\mathsf \Sigma}_s)=\mathcal O(\delta)$, in this configuration too.

For what concerns the behavior with respect to $h$, ICDD behaves as in the conforming case in Tests 4A and 4D, whereas the behavior differs in Tests 4B and 4C. Indeed, in the latter cases, both $\lambda_{min}(\widetilde{\mathsf{\Sigma}}_s)$ and $\|\widetilde{\mathsf{\Sigma}}\|_2$ behave like $\mathcal{O}(h^q)$ with the same $q>1$. However, this will not impact the convergence rate of GMRES because, as we will see in Theorem \ref{thm:convergenceGMRES_weak} this depends on the ratio between the two quantities.

We remark that in Tests 4B and 4C the eigenvalues $\lambda_{min} (\widetilde{\mathsf{\Sigma}}_s)$ become negative for some value of $h$ as we have showed in Appendix \ref{app:lambdamin}. 

Finally, the behavior with respect to $p$ is as in the conforming case.

\begin{figure}
    \centering
    \includegraphics[width=\linewidth]{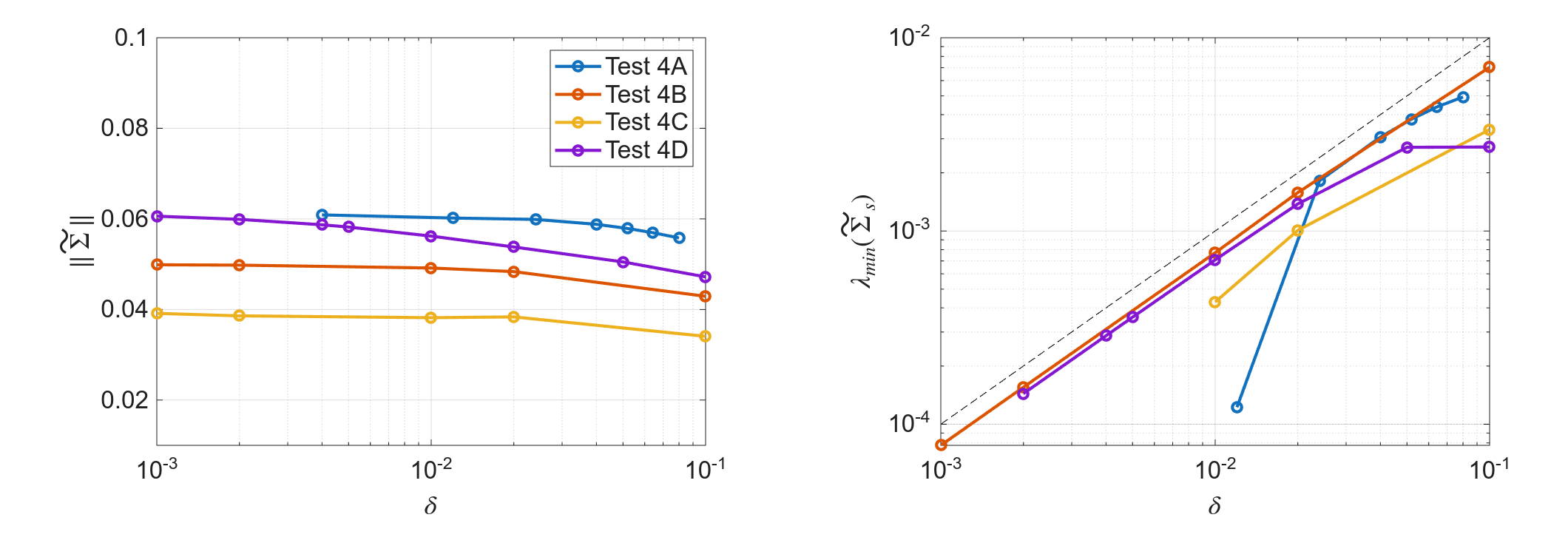}
    \includegraphics[width=\linewidth]{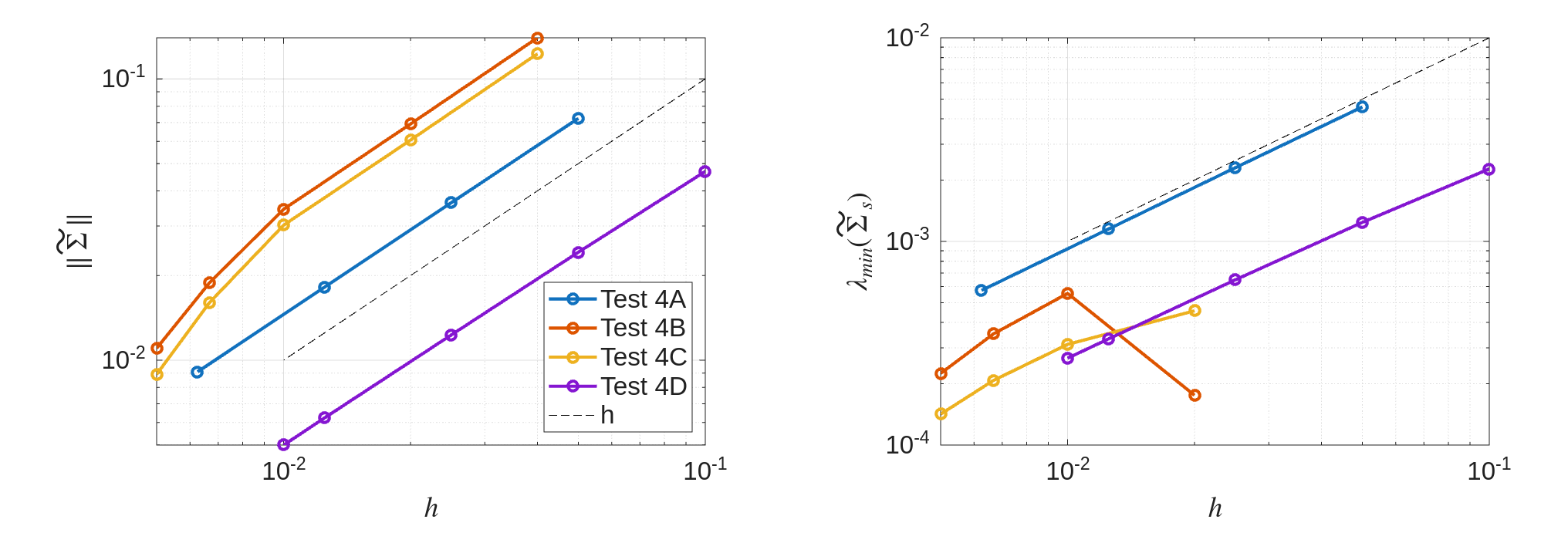}
    \includegraphics[width=\linewidth]{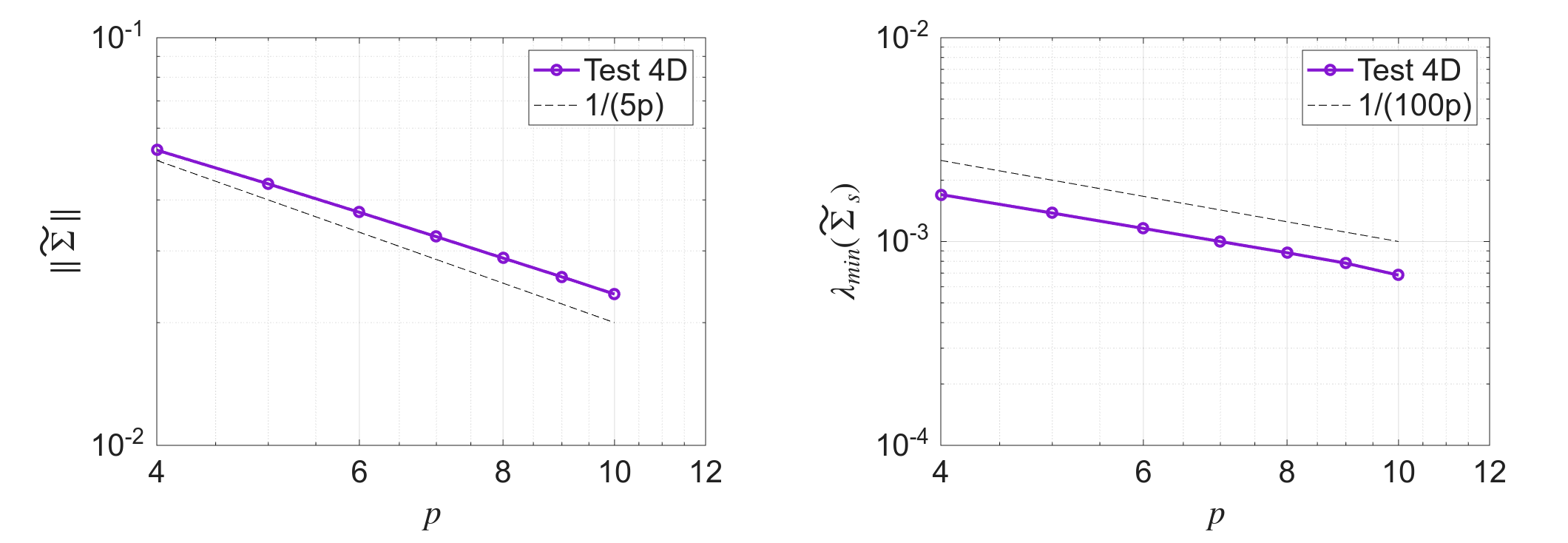}
    \caption{\emph{Test \#4.} Non-conforming discretizations on the overlap.  $\|\widetilde{\mathsf \Sigma}\|_2$ (left) and $\lambda_{min}(\widetilde{\mathsf \Sigma}_s)$ (right) versus the overlap width $\delta$ (top), the mesh-size $h=h_1$ (center), the polynomial degree $p$ (bottom). Test 4A, Test 4B, Test 4C, and Test 4D refer to FEM $\mathbb P_1-\mathbb P_2$, $\mathbb Q_1-\mathbb P_1$, $\mathbb Q_2-\mathbb P_1$, and SEM $\mathbb Q_p-\mathbb Q_{p+1}$ discretizations, respectively.}
    \label{fig:test4}
\end{figure}

\section{Convergence of GMRES iterations for the ICDD interface systems}\label{sec:convergenceGMRES}

In this section, we derive several estimates for the number of GMRES iterations needed to solve the Schur complement system associated with the ICDD method using theoretical results on the convergence of GMRES \cite{beckermann2006}. We begin by considering the weak Schur complement system \eqref{eq:schur_2_tilde}.

\begin{theorem}\label{thm:convergenceGMRES_weak}
Let Assumptions \ref{ass_mesh}--\ref{ass_conforming} be satisfied. Let  $\widetilde{\boldsymbol{r}}^{(m)}$ be the residual of the weak Schur complement system \eqref{eq:schur_2_tilde} at the $m$th step of GMRES iterations. Then, 
\begin{equation*}
\frac{\|\widetilde{\boldsymbol{r}}^{(m)}\|_2}{\|\widetilde{\boldsymbol{r}}^{(0)}\|_2}\leq \left( 2 + \frac{2}{\sqrt{3}} \right)(2+\gamma_\beta)\,\gamma_\beta^m,
\end{equation*}
where
\begin{equation*}
\cos(\beta) =\frac{\lambda_{min}(\widetilde{\mathsf \Sigma}_s)}{\|\widetilde{\mathsf \Sigma}\|_2} \qquad \mbox{ and }\qquad \gamma_\beta=2 \sin \left( \frac{\beta}{4 - 2\beta/\pi} \right) 
\end{equation*}
with $\beta\in[0,\pi/2)$, and $\|\cdot\|_2$ denotes the Euclidean norm.

Moreover, for $\delta \to 0$, the minimal number of iterations required for the relative residual to become smaller than an assigned tolerance $\epsilon$ can be estimated as
\begin{equation}\label{eq:estimate_iterations_weak}
m \geq \frac{3\sqrt{3}}{4} \,c_{LH} \, \frac{\overline{c}(p)}{\underline{c}(p)} \, \log \left( \frac{4(1+\sqrt{3})}{\epsilon} \right) \, \delta^{-1} \, ,
\end{equation}
where the constant $c_{LH}$ is defined in Theorem \ref{thm:upperbound_max_eig}, while $\overline{c}(p)$ and $\underline{c}(p)$, both independent of $\delta$ and $h$, are defined in Theorem \ref{thm:lowerbound_min_eig}.
\end{theorem}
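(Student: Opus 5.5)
The plan is to apply the GMRES convergence bound of Beckermann, Goreinov and Tyrtyshnikov \cite{beckermann2006}. This bound holds for any matrix whose field of values $W(\widetilde{\mathsf\Sigma})=\{\zetabv^T\widetilde{\mathsf\Sigma}\zetabv/\zetabv^T\zetabv\}$ is contained in the set $\{z\in\mathbb C:\ |z|\le \|\widetilde{\mathsf\Sigma}\|_2,\ \mathrm{Re}\,z\ge \lambda_{min}(\widetilde{\mathsf\Sigma}_s)\}$ with $\lambda_{min}(\widetilde{\mathsf\Sigma}_s)>0$. For such a matrix it gives
\[
\|\widetilde{\boldsymbol r}^{(m)}\|_2/\|\widetilde{\boldsymbol r}^{(0)}\|_2\le (2+2/\sqrt3)(2+\gamma_\beta)\gamma_\beta^m,
\qquad \cos\beta=\lambda_{min}(\widetilde{\mathsf\Sigma}_s)/\|\widetilde{\mathsf\Sigma}\|_2 .
\]
So the first step is to verify these hypotheses. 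The real part of any element of $W(\widetilde{\mathsf\Sigma})$ is a Rayleigh quotient of the symmetric part $\widetilde{\mathsf\Sigma}_s$, hence it is at least $\lambda_{min}(\widetilde{\mathsf\Sigma}_s)$. Theorem \ref{thm:lowerbound_min_eig} shows this quantity is positive, i.e. $\widetilde{\mathsf\Sigma}$ is positive real. The modulus of any element is at most $\|\widetilde{\mathsf\Sigma}\|_2$, which is finite by Theorem \ref{thm:upperbound_max_eig}. Consequently $\cos\beta\in(0,1]$ and $\beta\in[0,\pi/2)$, and the first claim follows directly.

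For the iteration count, I will substitute the two spectral bounds. This gives
\[
\frac{1}{\cos\beta}\le \frac{h\,\overline c(p)\,\bigl(c_{LH}+\max_\ell\frac{\|\nu_\ell\|_\infty}{\underline\nu_\ell}\delta\bigr)}{h\,\delta\,\underline c(p)} ,
\]
in which $h$ cancels. As $\delta\to0$ the right-hand side behaves like $c_{LH}\,\overline c(p)/(\underline c(p)\,\delta)$. Hence $\cos\beta\to0$ and $\beta\to\pi/2$. In this regime I will expand $\gamma_\beta$ around $\beta=\pi/2$. Write $\beta=\pi/2-\eta$ and $f(\beta)=\beta/(4-2\beta/\pi)$. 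Then $f(\pi/2)=\pi/6$, so $\gamma_{\pi/2}=1$. A direct differentiation gives $f'(\pi/2)=4/9$. Therefore $\gamma_\beta\approx 1-2\cos(\pi/6)\tfrac49\eta=1-\tfrac{4\sqrt3}{9}\eta$, where $\eta\approx\cos\beta$ to first order. It follows that $-\log\gamma_\beta\approx \tfrac{4\sqrt3}{9}\cos\beta$.

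To make the relative residual smaller than $\epsilon$, it suffices that $(2+2/\sqrt3)(2+\gamma_\beta)\gamma_\beta^m\le\epsilon$. Using $\gamma_\beta\le1$ to bound the prefactor by a constant, I expect $4(1+\sqrt3)$ after the paper's bookkeeping. This gives the sufficient condition
\[
m\ge \log\!\bigl(4(1+\sqrt3)/\epsilon\bigr)\,\big/\,(-\log\gamma_\beta)\approx \tfrac{9}{4\sqrt3}\,\frac{1}{\cos\beta}\,\log\!\bigl(4(1+\sqrt3)/\epsilon\bigr).
\]
Since $9/(4\sqrt3)=3\sqrt3/4$, inserting the asymptotic bound on $1/\cos\beta$ yields \eqref{eq:estimate_iterations_weak}.

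The main obstacle is this asymptotic step. The first-order expansions of $\gamma_\beta$ and $\log\gamma_\beta$ must be tracked so that the constant $3\sqrt3/4$ comes out exactly, and the prefactor must be bounded by a clean constant. Both are only first-order statements in $\delta$, which is why the estimate is stated for $\delta\to0$. The spectral inputs themselves are already available from Theorems \ref{thm:upperbound_max_eig} and \ref{thm:lowerbound_min_eig}.
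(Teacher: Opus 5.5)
Your proposal follows the paper's proof essentially step by step. You apply the Beckermann et al. bound to the positive real matrix $\widetilde{\mathsf\Sigma}$, substitute the spectral bounds of Theorems \ref{thm:upperbound_max_eig}--\ref{thm:lowerbound_min_eig} (with $h$ cancelling), and expand to first order near $\beta=\pi/2$. Your constant $9/(4\sqrt3)=3\sqrt3/4$ is exactly right.

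One step needs tightening. An upper bound on $1/\cos\beta$ that blows up as $\delta\to0$ does not imply $\cos\beta\to0$. So you cannot justify expanding $\gamma_\beta$ at the true $\beta$. The paper avoids this as follows:
\begin{itemize}
\item It sets $\overline\beta=\arccos$ of the lower bound on $\cos\beta$, so that $\beta\le\overline\beta$.
\item Since $\beta\mapsto\gamma_\beta$ and $\gamma\mapsto(2+\gamma)\gamma^m$ are increasing, $\gamma_\beta$ can be replaced by $\gamma_{\overline\beta}$ in the residual bound.
\item It then expands $\gamma_{\overline\beta}$, which is legitimate because $\overline\beta\to\pi/2$ does hold as $\delta\to0$.
\end{itemize}

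On the prefactor: with $\gamma\le1$ it is at most $(2+2/\sqrt3)\cdot 3=6+2\sqrt3$. This is below $4(1+\sqrt3)$, so the paper's constant is a valid, slightly conservative choice; you don't need to ``expect'' it.
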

\begin{proof}
The proof is based on Theorem 2.1 of \cite{beckermann2006} that we report in Appendix \ref{appendix:gmres} for the sake of clarity. The Schur complement matrix $\widetilde{\mathsf \Sigma}$ is positive real as a consequence of Theorem \ref{thm:lowerbound_min_eig}. Using \eqref{eq:upperbound_Sigmatilde} and \eqref{eq:lowerbound_min_eig}, we get
\begin{equation*}
    \cos(\beta) \geq \frac{\underline{c}(p)}{\overline{c}(p)} 
    \frac{\delta}{c_{LH} + \max_\ell\frac{\|\nu_\ell\|_\infty}{\underline\nu_\ell}\delta}\,.
\end{equation*}
Thus, $0 < \beta \leq \overline{\beta}$ with
\begin{equation*}
 \overline{\beta} = \arccos\left( \frac{\underline{c}(p)}{\overline{c}(p)} \frac{\delta}{c_{LH} + \max_\ell\frac{\|\nu_\ell\|_\infty}{\underline\nu_\ell}\delta} \right) ,
\end{equation*}
and
\begin{equation*}
    \gamma_\beta  \leq\gamma_{\overline \beta}= 2 \sin \left( \frac{\overline{\beta}}{4 - 2\overline{\beta}/\pi} \right).
\end{equation*}
We notice that $\gamma_{\overline \beta}<1$ because
$\overline{\beta}\in[0,\pi/2)$ implies $\overline\beta/(4-2\overline\beta/\pi)\in[0,\pi/6). $

Since the right hand side of (\ref{eq:res_gmres}) is an increasing function with respect to $\gamma_\beta$, (\ref{eq:res_gmres}) implies that
\begin{equation*}
    \frac{\|\boldsymbol{r}^{(m)}\|_2}{\|\boldsymbol{r}^{(0)}\|_2}\leq (2+2/\sqrt{3})(2+\gamma_{\overline\beta})\gamma_{\overline\beta}^m.
\end{equation*}
Thus, if 
\begin{equation*} 
         m\geq 
     \frac{\log\epsilon -\log((2+2/\sqrt{3})(2+\gamma_{\overline\beta}))} {\log\gamma_{\overline\beta}}
\end{equation*}
then $\frac{\|\boldsymbol{r}^{(m)}\|_2}{\|\boldsymbol{r}^{(0)}\|_2}\leq \epsilon$.

Then, \eqref{eq:estimate_iterations_weak} is obtained by applying the Taylor expansion to the right-hand side of the previous inequality around $\delta = 0$.

\end{proof}

Since $\overline c(p)\sim p^{-1/2}$ and $\underline c(p)\sim p^{-2}$, thus we deduce that the minimum number of iterations required by GMRES increases with $p$, asymptotically like $p^{3/2}$, while it is independent of the mesh size $h$.

\medskip

Numerical results showing the number of GMRES iterations versus $\delta$, $h$, and $p$ will be presented in Sect. \ref{sec:numerical_results}.

\bigskip

We consider now the case of the Schur complement system \eqref{eq:schur_2} associated with the strong form of the interface equations \eqref{eq:os2}$_3$.

\begin{corollary}\label{cor:iterations_strong}
   Let Assumptions \ref{ass_mesh}--\ref{ass_conforming} be satisfied and
    $\boldsymbol{r}^{(m)}$ be the residual of the strong Schur complement system \eqref{eq:schur_2} at the $m$th GMRES iteration. For a fixed tolerance $\epsilon$,when $\delta\to 0$, the number $m$ of GMRES iterations needed to guarantee that
    \begin{equation}\label{eq:bound_residual_strong}
        \frac{\|\boldsymbol{r}^{(m)}\|_2}{\|\boldsymbol{r}^{(0)}\|_2}
        \leq \epsilon
    \end{equation}
    is
    \begin{equation}\label{eq:estimate_iter_strong}
    m\geq\frac{3\sqrt{3}}{4}c_{LH} \frac{\overline c(p)}{\underline c(p)}\log\left(\frac{4\,\widehat{c}_2\,(1+\sqrt{3})\,p}{\widehat{c}_1\,\epsilon}\right)\delta^{-1}\,.
\end{equation}
\end{corollary}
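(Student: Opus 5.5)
The plan is to reduce the statement to Theorem \ref{thm:convergenceGMRES_weak} using the identities \eqref{eq:sigma-chi-strong-weak}, $\widetilde{\mathsf\Sigma}=\mathsf M_\Gamma\mathsf\Sigma$ and $\widetilde{\boldsymbol\chi}=\mathsf M_\Gamma\boldsymbol\chi$. These give, for any iterate $\lbv^{(m)}$, the relation between residuals
\begin{equation*}
\widetilde{\boldsymbol r}^{(m)}=\widetilde{\boldsymbol\chi}-\widetilde{\mathsf\Sigma}\lbv^{(m)}=\mathsf M_\Gamma\,\boldsymbol r^{(m)} .
\end{equation*}
The price of the change of system should be the spectral condition number of $\mathsf M_\Gamma$. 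By Lemma \ref{lemma:eig_mass} (applied blockwise to $\mathsf M_\Gamma=\mathrm{diag}(\mathsf M_{\Gamma_1},\mathsf M_{\Gamma_2})$) this is
\begin{equation*}
\kappa_2(\mathsf M_\Gamma)\le \frac{\widehat c_2\,h/p}{\widehat c_1\,h/p^2}=\frac{\widehat c_2}{\widehat c_1}\,p .
\end{equation*}
This is exactly the extra factor appearing inside the logarithm in \eqref{eq:estimate_iter_strong}.

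First I bound the residual norms. From $\boldsymbol r^{(m)}=\mathsf M_\Gamma^{-1}\widetilde{\boldsymbol r}^{(m)}$ I get $\|\boldsymbol r^{(m)}\|_2\le \|\mathsf M_\Gamma^{-1}\|_2\|\widetilde{\boldsymbol r}^{(m)}\|_2$. From $\widetilde{\boldsymbol r}^{(0)}=\mathsf M_\Gamma\boldsymbol r^{(0)}$ I get $\|\widetilde{\boldsymbol r}^{(0)}\|_2\le\|\mathsf M_\Gamma\|_2\|\boldsymbol r^{(0)}\|_2$. Combining the two,
\begin{equation*}
\frac{\|\boldsymbol r^{(m)}\|_2}{\|\boldsymbol r^{(0)}\|_2}\le \frac{\widehat c_2}{\widehat c_1}\,p\,\frac{\|\widetilde{\boldsymbol r}^{(m)}\|_2}{\|\widetilde{\boldsymbol r}^{(0)}\|_2}\le \frac{\widehat c_2}{\widehat c_1}\,p\,\Big(2+\frac{2}{\sqrt3}\Big)(2+\gamma_{\overline\beta})\gamma_{\overline\beta}^m .
\end{equation*}
Second, I require the right-hand side to be at most $\epsilon$. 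This is the same computation as in the proof of Theorem \ref{thm:convergenceGMRES_weak}, with $\epsilon$ replaced by $\widehat c_1\epsilon/(\widehat c_2 p)$. Third, I Taylor-expand $\log\gamma_{\overline\beta}$ around $\delta=0$, where $\cos\overline\beta\approx \frac{\underline c(p)}{\overline c(p)}\frac{\delta}{c_{LH}}$ and $\gamma_{\overline\beta}\to 2\sin(\pi/6)=1$. This reproduces the leading factor $\frac{3\sqrt3}{4}c_{LH}\frac{\overline c(p)}{\underline c(p)}\delta^{-1}$. In the same limit the constant $(2+2/\sqrt3)(2+\gamma_{\overline\beta})$ tends to $4(1+\sqrt3)$, which gives \eqref{eq:estimate_iter_strong}.

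The main obstacle is that GMRES applied to $\mathsf\Sigma$ and GMRES applied to $\widetilde{\mathsf\Sigma}$ generate different Krylov spaces. The inequality above therefore compares iterates of the weak iteration, not the actual strong GMRES iterates. I would handle this in one of two ways.

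\emph{First option.} Interpret the estimate for the strong system through the weak iterates, which are admissible approximations for the same unknown $\lbv$. This is an upper bound on the work needed to reach \eqref{eq:bound_residual_strong}.

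\emph{Second option, more rigorous.} Run the Beckermann-type argument (Appendix \ref{appendix:gmres}) directly on $\mathsf\Sigma$ in the $\mathsf M_\Gamma$-inner product. There, $(\mathsf\Sigma\zetabv,\zetabv)_{\mathsf M_\Gamma}=\zetabv^T\widetilde{\mathsf\Sigma}\zetabv$ is bounded below by \eqref{eq:below_Sigma_Gamma}, and the $\mathsf M_\Gamma$-norm of $\mathsf\Sigma$ is controlled by the same arguments as in Theorem \ref{thm:upperbound_max_eig}. Euclidean GMRES residuals are then compared with the $\mathsf M_\Gamma$-norm minimal residuals, again at the cost of a factor depending only on $\kappa_2(\mathsf M_\Gamma)\lesssim p$. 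This factor enters only through the logarithm, so the asymptotic $\delta^{-1}$ and $p$ dependence is unchanged.

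I would present the first option and remark on the second.
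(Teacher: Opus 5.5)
Your first option is exactly the paper's proof. The paper writes $\widetilde{\boldsymbol r}^{(m)}=\mathsf M_\Gamma\boldsymbol r^{(m)}$ for every $m\ge 0$, uses Lemma \ref{lemma:eig_mass} to get the factor $\frac{\widehat c_2}{\widehat c_1}p$, and feeds the tolerance $\frac{\widehat c_1}{\widehat c_2}\frac{\epsilon}{p}$ into Theorem \ref{thm:convergenceGMRES_weak}.

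The obstacle you raise is genuine, and the paper passes over it. The identity $\widetilde{\boldsymbol r}^{(m)}=\mathsf M_\Gamma\boldsymbol r^{(m)}$ holds only when both residuals come from the same vector $\lbv^{(m)}$. Theorem \ref{thm:convergenceGMRES_weak} controls the weak residual of an iterate in $\mathcal K_m(\mathsf M_\Gamma\mathsf\Sigma,\mathsf M_\Gamma\chibv)$, whereas GMRES on \eqref{eq:schur_2} minimizes only over $\mathcal K_m(\mathsf\Sigma,\chibv)$.

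Your first option does not close this gap. The weak iterates do approximate $\lbv$, but they are not competitors in the minimization that GMRES on \eqref{eq:schur_2} performs. What you bound is therefore the work of GMRES on \eqref{eq:schur_2_tilde} stopped by the strong residual, not the method in the statement. Nor can Theorem \ref{thm:gmres-beckermann} be applied to $\mathsf\Sigma$ in the Euclidean inner product: the available estimates give no lower bound on $\zetabv^T\mathsf\Sigma\zetabv/\zetabv^T\zetabv$. So the second option has to be the proof, not a remark.

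The second option works, and more cleanly than you indicate. Let $\widehat{\mathsf\Sigma}=\mathsf M_\Gamma^{1/2}\mathsf\Sigma\mathsf M_\Gamma^{-1/2}$.

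\emph{Field of values.} For $\boldsymbol y=\mathsf M_\Gamma^{1/2}\zetabv$ one has $\boldsymbol y^T\widehat{\mathsf\Sigma}\boldsymbol y=\zetabv^T\widetilde{\mathsf\Sigma}\zetabv$. Hence \eqref{eq:below_Sigma_Gamma} gives directly $\lambda_{min}(\widehat{\mathsf\Sigma}_s)\ge\delta/\big(2(c_{LH}+\max_\ell\frac{\|\nu_\ell\|_\infty}{\underline\nu_\ell}\delta)\big)$, with no mass-matrix eigenvalues involved.

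\emph{Norm.} Do the $L^2(\Gamma)$ computation rather than the Euclidean detours of Theorem \ref{thm:upperbound_max_eig}, which cost powers of $p$. Under Assumption \ref{ass_conforming}, $\mathsf\Sigma\zetabv$ is the nodal vector of $(\zeta_{1,h}-u^{\zeta_2}_{2,h}|_{\Gamma_1},\,\zeta_{2,h}-u^{\zeta_1}_{1,h}|_{\Gamma_2})\in\boldsymbol\Lambda_h$. Corollary \ref{cor:gamma_gamma} then gives $\|\widehat{\mathsf\Sigma}\|_2\le 1+\max_\ell C_{\delta,\ell}<2$.

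\emph{Angle and residual.} It follows that $\cos\beta\ge\delta/\big(4(c_{LH}+\max_\ell\frac{\|\nu_\ell\|_\infty}{\underline\nu_\ell}\delta)\big)$, independently of $h$ and $p$. Euclidean GMRES on $\widehat{\mathsf\Sigma}$ is $\mathsf M_\Gamma$-norm GMRES on $\mathsf\Sigma$. Its residual lies in $\boldsymbol r^{(0)}+\mathsf\Sigma\,\mathcal K_m(\mathsf\Sigma,\boldsymbol r^{(0)})$, over which the actual GMRES minimizes $\|\cdot\|_2$. Hence
\[
\frac{\|\boldsymbol r^{(m)}\|_2}{\|\boldsymbol r^{(0)}\|_2}\le\kappa_2(\mathsf M_\Gamma)^{1/2}\Big(2+\frac{2}{\sqrt3}\Big)(2+\gamma_\beta)\,\gamma_\beta^m,\qquad\kappa_2(\mathsf M_\Gamma)\le\frac{\widehat c_2}{\widehat c_1}\,p .
\]

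So the price of the change of inner product is $\kappa_2^{1/2}$, not $\kappa_2$. Contrary to your closing sentence, the $p$-dependence does change. Note that $(2+2/\sqrt3)(2+\gamma_\beta)\to 6+2\sqrt3$, not $4(1+\sqrt3)$; the latter is larger, so the constant you and the paper use is merely conservative. The Taylor expansion then gives the sufficient count $3\sqrt3\,c_{LH}\,\delta^{-1}\log\big((6+2\sqrt3)(\widehat c_2p/\widehat c_1)^{1/2}/\epsilon\big)$, with $p$ appearing only inside the logarithm.

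Since $\overline c(p)/\underline c(p)=2\frac{\widehat c_2}{\widehat c_1}p\big(1+\frac{\widehat c_2}{\widehat c_1}p\big)^{1/2}\ge 4$ for $p\ge2$ (using $\widehat c_1\le\widehat c_2$), this count lies below the right-hand side of \eqref{eq:estimate_iter_strong}. It therefore yields the corollary as stated, at the same leading-order level the statement uses. It is also consistent with the $p$-independent strong iteration counts in Fig.~\ref{fig:test1-4_iterations_p}, which the paper attributes to losses in its weak-to-strong transfer.
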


\begin{proof}
Using the relationship \eqref{eq:sigma-chi-strong-weak}, there holds $\widetilde{\boldsymbol{r}}^{(m)} = \mathsf{M}_\Gamma \boldsymbol{r}^{(m)}$ or, equivalently, $\boldsymbol{r}^{(m)} = \mathsf{M}_\Gamma^{-1} \widetilde{\boldsymbol{r}}^{(m)}$ since $\mathsf{M}_\Gamma$ is symmetric and positive definite. Therefore, for any $m\geq 0$, $\|\boldsymbol{r}^{(m)}\|_2 = \|\mathsf{M}_\Gamma^{-1}\widetilde{\boldsymbol{r}}^{(m)}\|_2=\|\widetilde{\boldsymbol{r}}^{(m)}\|_{\mathsf{M}_\Gamma^{-2}}$, 
where, for any symmetric positive definite matrix $\mathsf{B} \in \mathbb{R}^{n_k \times n_k}$, $\|\zetav_k\|_{\mathsf{B}}$ denotes the matrix norm   $\|\zetav_k\|_{\mathsf{B}} = ( \zetav_k^T\mathsf{B}\,\zetav_k )^{1/2}$ associated with the scalar product $(\zetav_k,\zetav_k)_{\mathsf{B}}=(\mathsf{B}\zetav_k,\zetav_k)_2$, $(\cdot,\cdot)_2$ being the standard Euclidean scalar product.

For any $\zetav_k\in\mathbb R^{n_k}\setminus\ \{ \zerov \}$, as a consequence of Lemma \ref{lemma:eig_mass} and Assumptions \ref{ass_conforming} there holds
\begin{equation}\label{eq:lemma2_alg1_bis}
    \widehat{c}_1 \frac{h}{p^2} \|\zetav_k\|_2^2 \leq \| \zetav_k\|_{\mathsf{M}_{\Gamma_k}}^2 \leq \widehat{c}_2 \frac{h}{p} \| \zetav_k\|_2^2\,, \qquad k=1,2.
\end{equation}
By taking $\zetav_k=\mathsf M_{\Gamma_k}\etav_k$ in (\ref{eq:lemma2_alg1_bis}) we obtain
\begin{equation}\label{eq:lemma2_alg2_bis}
    \widehat{c}_1 \frac{h}{p^2} \|\etav_k\|_{\mathsf M_{\Gamma_k^{-1}}}^2 \leq \| \etav_k\|_{2}^2 \leq \widehat{c}_2 \frac{h}{p} \| \etav_k\|_{\mathsf M_{\Gamma_k^{-1}}}^2\,, \qquad k=1,2, 
\end{equation}
and similarly, by taking $\etav_k=\mathsf M_{\Gamma_k}\psibv_k$ in (\ref{eq:lemma2_alg2_bis}) we obtain
\begin{equation}\label{eq:lemma2_alg3_bis}
    \widehat{c}_1 \frac{h}{p^2} \|\psibv_k\|_{\mathsf M_{\Gamma_k^{-2}}}^2 \leq \| \psibv_k\|_{\mathsf M_{\Gamma_k^{-1}}}^2 \leq \widehat{c}_2 \frac{h}{p} \| \psibv_k\|_{\mathsf M_{\Gamma_k^{-2}}}^2\,, \qquad k=1,2. 
\end{equation}
Thus, by applying (\ref{eq:lemma2_alg1_bis}), (\ref{eq:lemma2_alg2_bis}), and (\ref{eq:lemma2_alg3_bis}), it holds 
\begin{equation*}
\frac{\|{\mathbf r}^{(m)}\|_2}{\|{\mathbf r}^{(0)}\|_2}=
\frac{\|\widetilde{\mathbf r}^{(m)}\|_{\mathsf M_{\Gamma}^{-2}}}{\|\widetilde{\mathbf r}^{(0)}\|_{\mathsf M_{\Gamma}^{-2}}}\leq \frac{\frac{1}{\hat c_1}\frac{p^2}{h}\|\widetilde{\mathbf r}^{(m)}\|_2}{\frac{1}{\hat c_2}\frac{p}{h}\|\widetilde{\mathbf r}^{(0)}\|_2}=
\frac{\hat c_2}{\hat c_1} p \frac{\|\widetilde{\mathbf r}^{(m)}\|_2}{\|\widetilde{\mathbf r}^{(0)}\|_2}.
\end{equation*}
Requiring 
\begin{equation*}
    \frac{\|\widetilde{\mathbf r}^{(m)}\|_2} {\|\widetilde{\mathbf r}^{(0)}\|_2}\leq \frac{\widehat c_1}{\widehat c_2}\frac{1}{p}\epsilon,
\end{equation*}
guarantees (\ref{eq:bound_residual_strong}) and 
provides the estimates (\ref{eq:estimate_iter_strong}) in view of Theorem \ref{thm:convergenceGMRES_weak}.
\end{proof}

As in the weak case, the number of iterations grows like $\delta^{-1}$ when $\delta$ vanishes, and is independent of the mesh size $h$. About the dependence on the polynomial degree $p$, the estimate (\ref{eq:estimate_iter_strong}) has a logarithmic factor more than (\ref{eq:estimate_iterations_weak}). However, numerical results of Section \ref{sec:numerical_results} show that the number of iterations is almost independent of the polynomial degree $p$ (see Figure \ref{fig:test1-4_iterations_p}).

\subsection{Numerical results}\label{sec:numerical_results}

We now assess numerically the theoretical results proved in Theorem \ref{thm:convergenceGMRES_weak} and Corollary \ref{cor:iterations_strong}, by showing the number of GMRES iterations required to solve the Schur complement system associated with ICDD. We also analyze the performance of the weak ICDD method (\ref{eq:schur_2_tilde}), the dual ICDD method (\ref{eq:schur_dual}), and the weak dual ICDD method, i.e., the variant of (\ref{eq:schur_dual}) where the interface conditions are imposed weakly. For the sake of clarity, we formulate the Schur complement systems associated with these four variants of ICDD as particular instances of the algebraic equation 
\begin{equation}\label{eq:ICDD_compact}
\mathsf{B} \, \mathsf{\Sigma} \, \boldsymbol{\lambda} = \mathsf{B} \, \boldsymbol{\chi}
\end{equation}
where $\mathsf\Sigma$ and $\boldsymbol  \chi$ are defined in (\ref{eq:schur_2_mat_rhs}), while $\mathsf B$ takes a different form depending on the method as defined in Table \ref{tab:icdd-compact-form}.

\begin{table}[h]
    \centering
\begin{tabular}{lcl}
Method         & $\mathsf{B}$& Equations \\
\hline
ICDD           & $\mathsf{I}$& (\ref{eq:schur_2})--(\ref{eq:schur_2_mat_rhs})\\
Weak ICDD      & $\mathsf{M}_\Gamma$& (\ref{eq:schur_2_tilde})--(\ref{eq:schur_2_mat_rhs_tilde})\\
Dual ICDD      & $2\mathsf{I} - \mathsf{\Sigma}$& (\ref{eq:schur_dual})--(\ref{eq:schur_dual_mat_rhs})\\
Weak dual ICDD & $\mathsf{M}_\Gamma ( 2 \mathsf{I} - \mathsf{\Sigma})$& \\
\hline
\end{tabular}
    \caption{Different variants of ICDD.}
    \label{tab:icdd-compact-form}
\end{table}

Unless otherwise specified, in the following, we consider problem \eqref{eq:globalProblem} in $\Omega=(0,2)\times(0,1)$ with right-hand side 
\begin{equation*}
    f(x,y)=\left\{\begin{array}{rl}
    -200 &\mbox{ in } (0,0.9]\times(0,0.4]\\
    0 &\mbox{ in } (0,0.9]\times(0.4,1)\\
    0 &\mbox{ in } (0.9,2)\times(0,0.4]\\
    200 &\mbox{ in } (0.9,2)\times(0.4,1).
    \end{array}
    \right.
\end{equation*}

The Schur complement systems \eqref{eq:ICDD_compact} are solved by GMRES without restart, with tolerance $\varepsilon=10^{-9}$ for the stopping test, and null initial guess.

\smallskip

We first consider the same settings as in Tests \#1--\#3 introduced in Sect. \ref{sec:numericalAnalysisConforming}.

\medskip

In Fig. \ref{fig:test1_iterations}, we report the number of GMRES iterations versus $\delta$ and $h_y$ required to solve the interface system \eqref{eq:ICDD_compact} considering the discretization of Test \#1. From the plots in the top row of Fig. \ref{fig:test1_iterations}, we see that the convergence rate of both ICDD and weak ICDD depends on $\delta$, but in a weaker way than established by Theorem \ref{thm:convergenceGMRES_weak} and Corollary \ref{cor:iterations_strong}, for all three different discretizations (Test 1A with $\mathbb P_1-\mathbb P_1$, Test 1B with $\mathbb P_3-\mathbb P_3$, and Test 1C with $\mathbb Q_6-\mathbb Q_6$). Both dual forms of ICDD require fewer iterations to converge, but, remembering that each iteration of the dual form costs about twice than one iteration of ICDD, the advantage of using the dual form is evident only when $\delta$ is very small. In particular, it is worth noting that the convergence rate of the dual weak ICDD is almost independent of $\delta$.

For the analysis with respect to $h_x$ and $h_y$, we confirm the theoretical result of Theorem \ref{thm:convergenceGMRES_weak} and Corollary \ref{cor:iterations_strong}, for which GMRES converges independently of the mesh size. For the sake of space, in the bottom row of Fig. \ref{fig:test1_iterations}, we only report the number of iterations versus $h_y$. 
Finally, the iterations versus $p$ for Test 1C are plotted in the left picture of Fig. \ref{fig:test1-4_iterations_p}. The convergence estimate of Theorem \ref{thm:convergenceGMRES_weak} about weak ICDD is confirmed, even though iterations grow a bit less than $p^{3/2}$. On the other hand, Corollary \ref{cor:iterations_strong} seems to overestimate the number of GMRES iterations for ICDD. As a matter of fact, in (\ref{eq:estimate_iter_strong}), $m \geq c p^{3/2}\log p$, while from  Fig. \ref{fig:test1-4_iterations_p}, we observe that $m$ is independent of $p$. This mismatch may be due to the fact that to prove (\ref{eq:estimate_iter_strong}) we need to perform the analysis in a weak setting and then return to the strong form (applying twice the norm equivalence (\ref{eq:eig_mass_general})), while at the algebraic level, we directly solve the Schur complement system in its strong form.

\begin{figure}
    \centering
    \includegraphics[width=\linewidth]{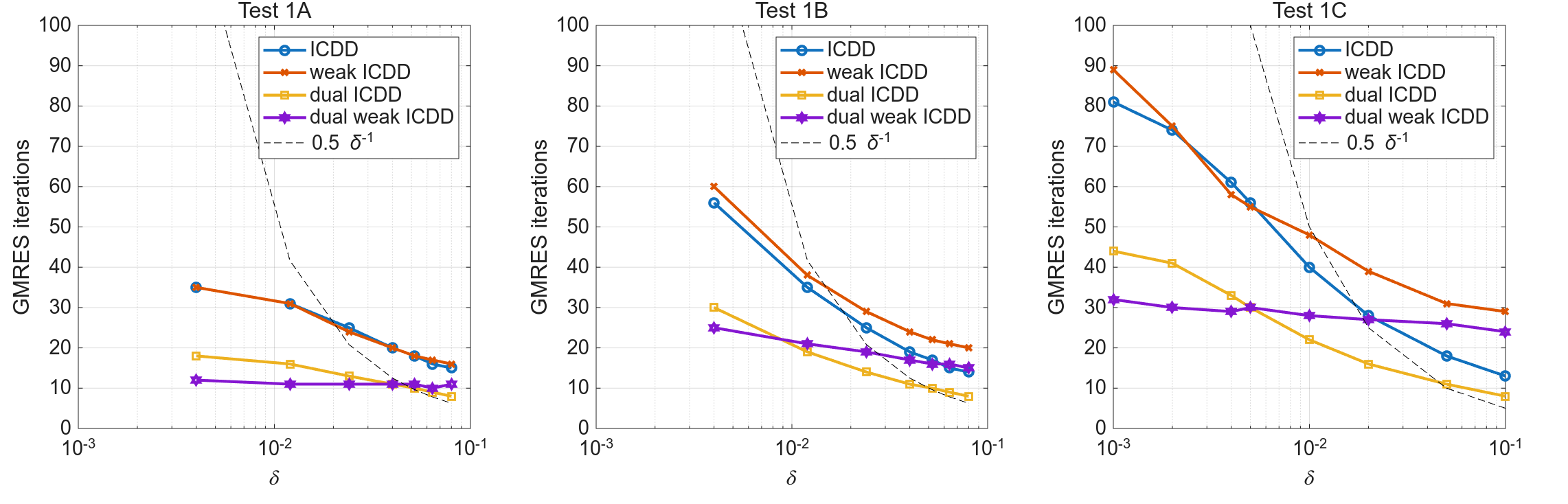}
    \includegraphics[width=\linewidth]{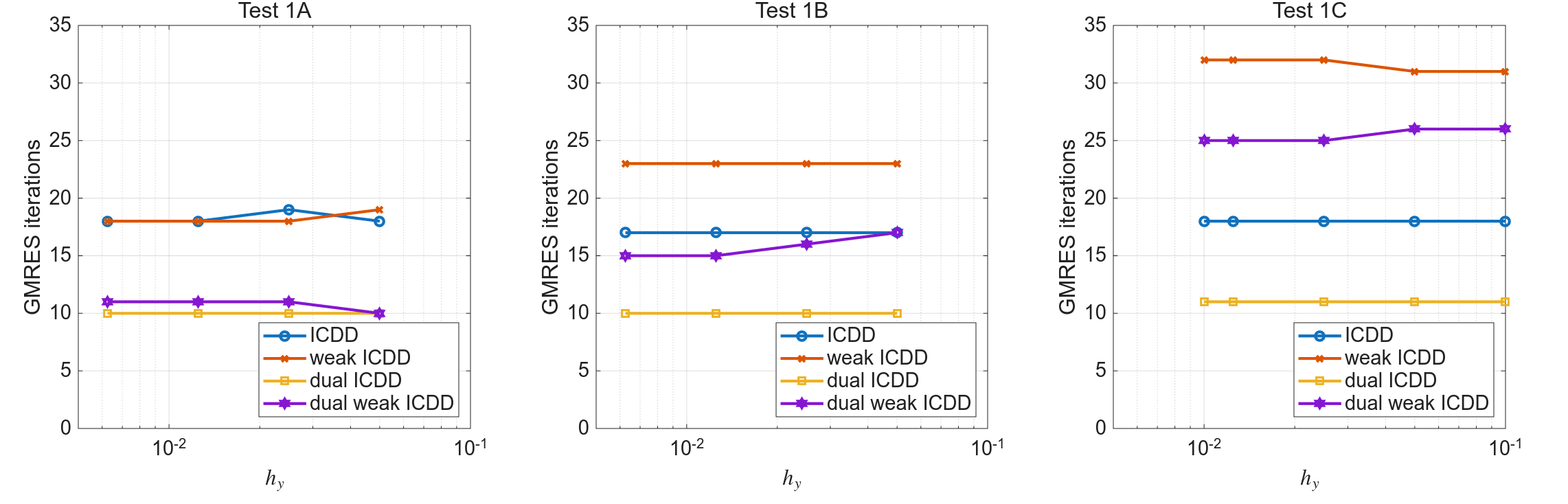}
    \caption{\emph{Test \#1.} GMRES iterations for Test 1A (left), Test 1B (center), Test 1C (right), versus $\delta$ (top row) and $h_y$  (bottom row). Test 1A, Test 1B, and Test 1C refer to FEM $\mathbb P_1-\mathbb P_1$, $\mathbb P_3-\mathbb P_3$, and SEM $\mathbb Q_6-\mathbb Q_6$ discretizations, respectively.}
    \label{fig:test1_iterations}
\end{figure}

\begin{figure}
    \centering
    \includegraphics[width=\linewidth]{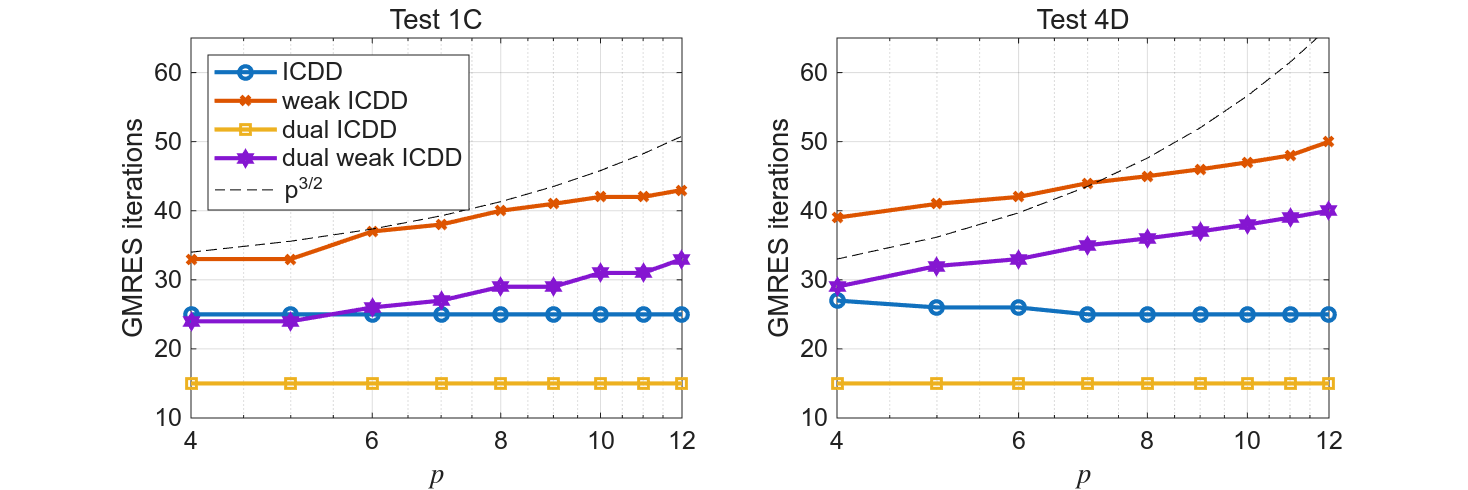}
    \caption{\emph{Test \#1 and Test \#4.} GMRES iterations for Test 1C (left), and Test 4D (right), versus $p$.}
    \label{fig:test1-4_iterations_p}
\end{figure}

\medskip

In Fig. \ref{fig:test2_iterations_delta} and \ref{fig:test2_iterations_hp}, we report the number of GMRES iterations required in the case of 
Test \#2 (see Sect. \ref{sec:numericalAnalysisConforming}). In particular, in Fig. \ref{fig:test2_iterations_delta}, we plot iterations of ICDD and of its other three variants versus $\delta$ for $\alpha\in\{0,-1,-2,-3\}$. 
As in Test \#1, also in this case iterations depend on $\delta$, but in a weaker way than established by Theorem \ref{thm:convergenceGMRES_weak} and Corollary \ref{cor:iterations_strong}. Moreover, the dual ICDD forms require fewer iterations, but are more efficient only for very small values of $\delta$. Furthermore, we note that as the ratio $\|\nu\|_\infty/\underline{\nu}=10^{|\alpha|}$ increases, the number of iterations tends to depend less on $\delta$, making ICDD the most convenient form for any $\delta$ considered. 

In Fig. \ref{fig:test2_iterations_hp}, the GMRES iterations of ICDD with respect to $h_y$ (left) and $p$ (right) are shown for all values of $\alpha$. In agreement with the results of Test \#1, also in this case the GMRES iterations are independent of both $h$ and $p$.

\begin{figure}
    \centering
    \includegraphics[width=\linewidth]{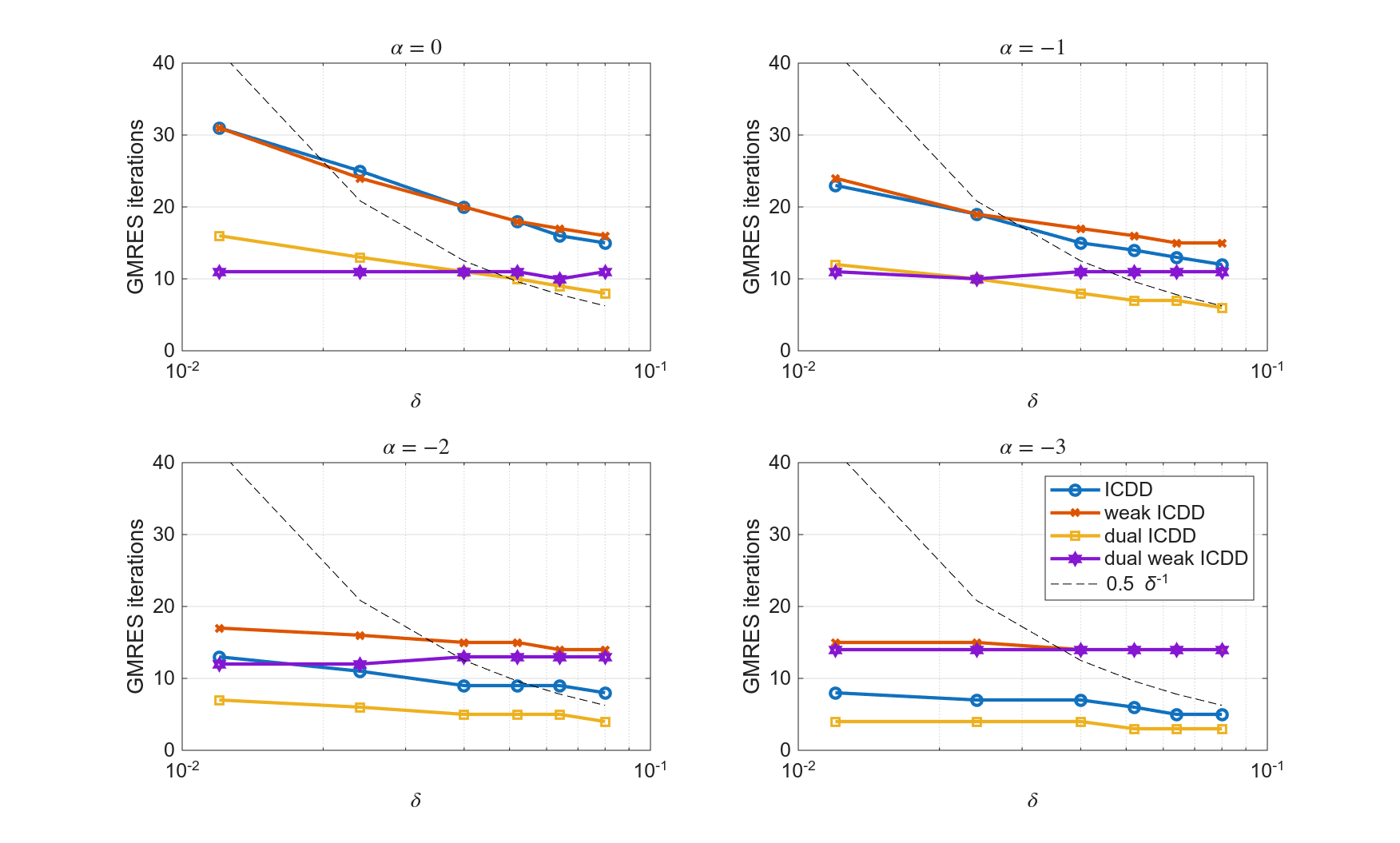}
    \caption{\emph{Test \#2.} GMRES iterations versus the overlap width $\delta$ for the case with discontinuous $\nu$ \eqref{eq:nu_disc}. FEM $\mathbb P_1$ discretization in both subdomains. The legend is the same for all the subplots.}
    \label{fig:test2_iterations_delta}
\end{figure}
\begin{figure}
    \centering
    \includegraphics[width=0.9\linewidth]{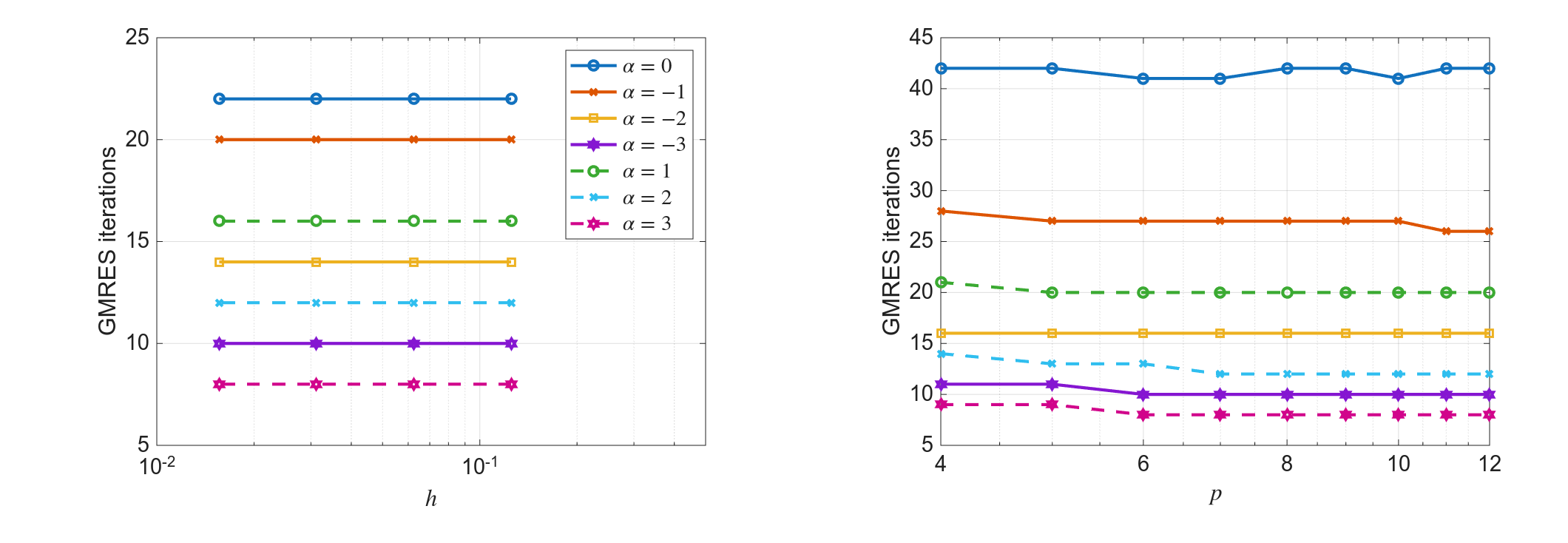}
    \caption{\emph{Test \#2.} GMRES iterations of ICDD versus $h$ for FEM $\mathbb P_1$ discretizations (left) and versus $p$ for SEM $\mathbb Q_p$ discretizations (right) in the case of discontinuous $\nu$ \eqref{eq:nu_disc}.}
    \label{fig:test2_iterations_hp}
\end{figure}

\medskip

Finally, analogous results are obtained for Test \#3 which features the discontinuous coefficient $\gamma$ \eqref{eq:gamma_disc} as shown in Fig. \ref{fig:test3_iterations_delta}. In particular, we observe that the number of iterations is almost independent of the jump of the coefficient $\gamma$.

\begin{figure}
    \centering
    \includegraphics[width=\linewidth]{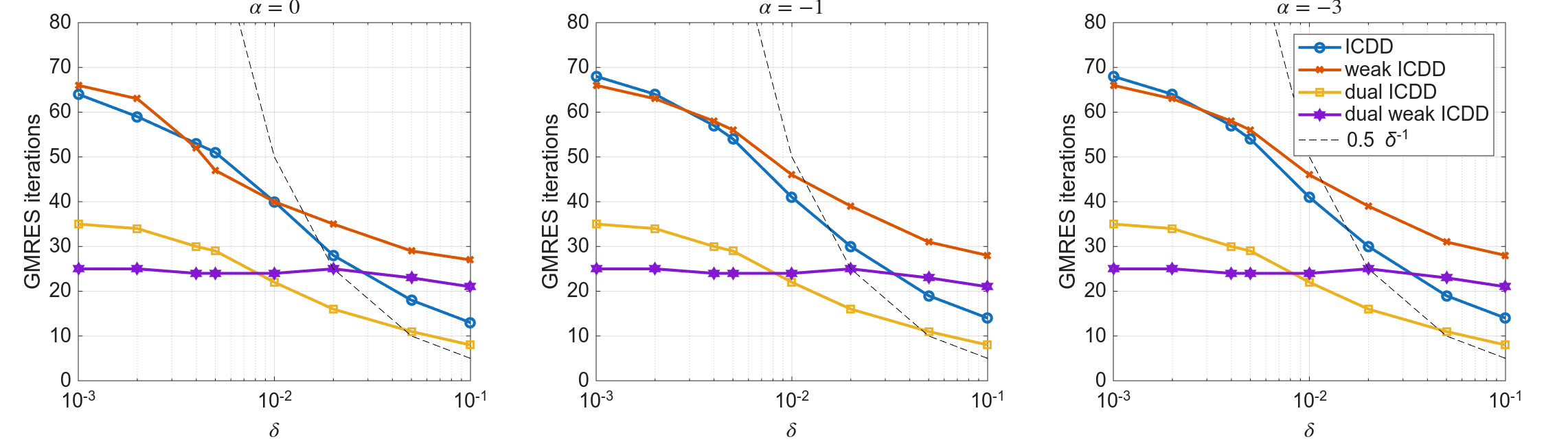}
    \caption{\emph{Test \#3.} GMRES iterations versus $\delta$ for the case with discontinuous $\gamma$ \eqref{eq:gamma_disc}. SEM $\mathbb Q_4$ discretizations.}
    \label{fig:test3_iterations_delta}
\end{figure}

\medskip

We now study numerically the behavior of ICDD iterations in the non-conforming setting, starting from Test \# 4 of Sect. \ref{sec:numericalAnalysisNonConforming}. Although this case is not covered by the convergence theory,
numerical results in Fig. \ref{fig:test4_iterations} show that ICDD and the other three variants perform similarly to the conforming case (Test \#1).

\begin{figure}
    \centering
    \includegraphics[width=\linewidth]{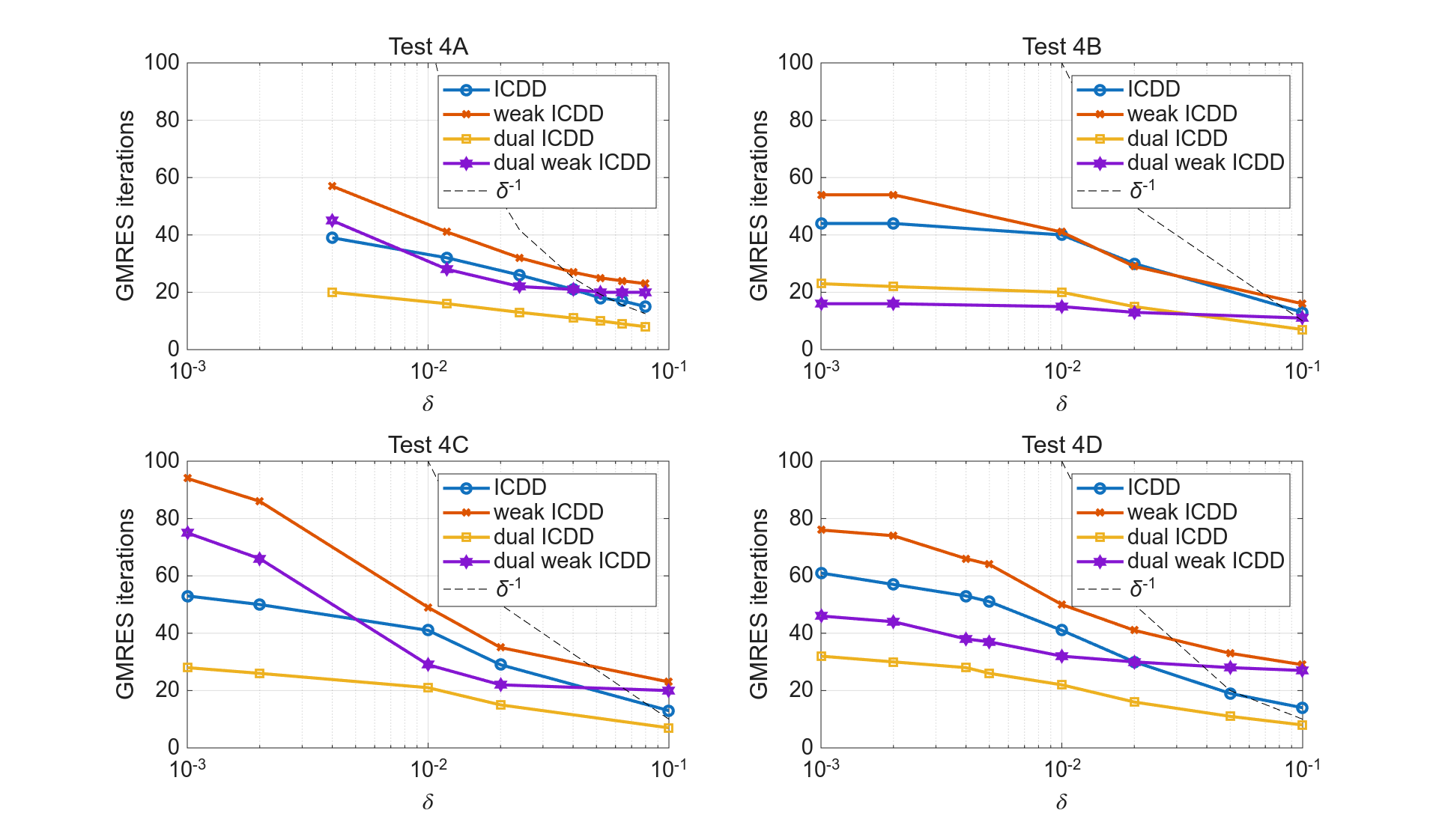}
    \includegraphics[width=\linewidth]{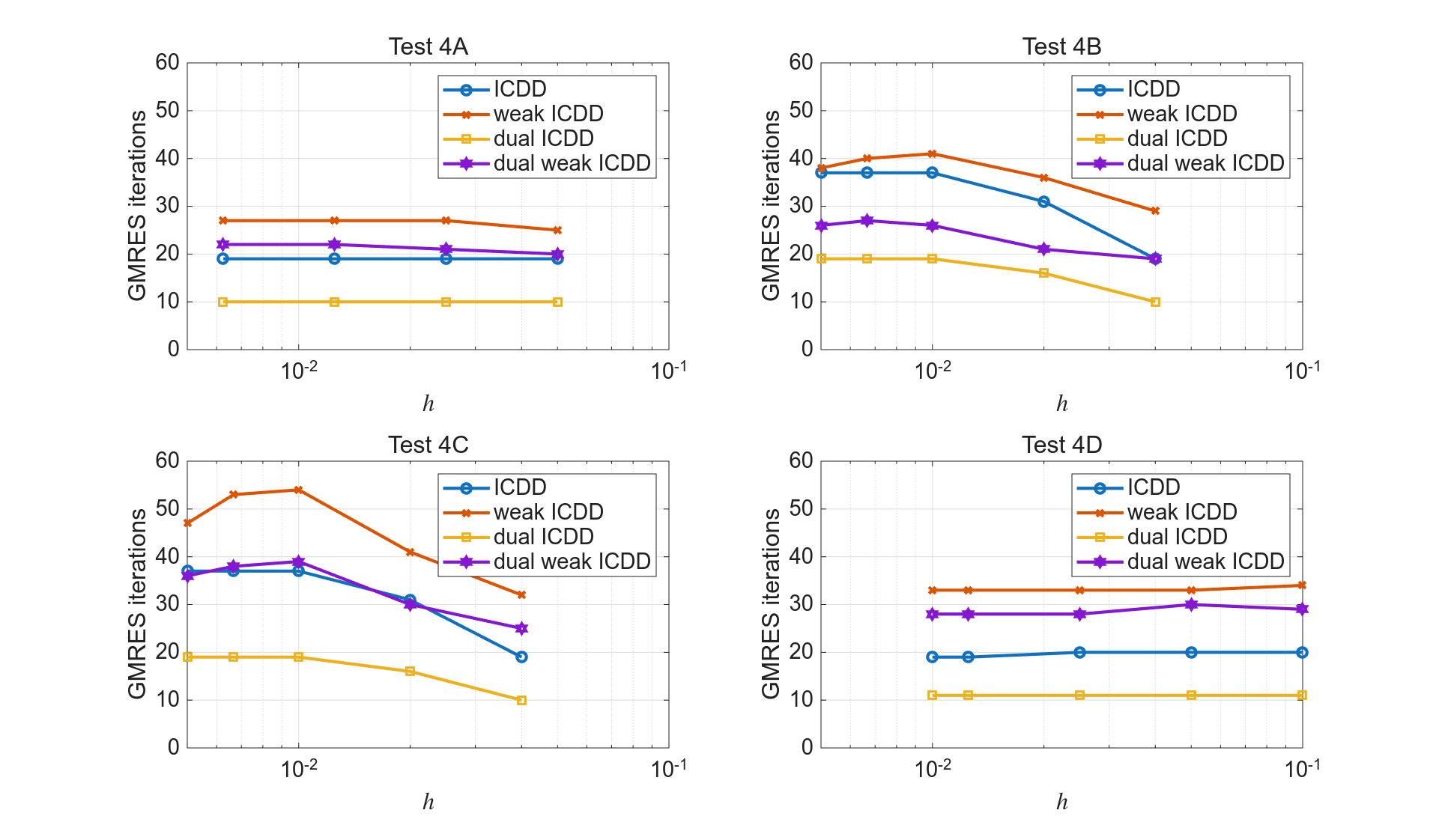}
    \caption{\emph{Test \#4.} GMRES iterations versus $\delta$ and $h$. Different non-conforming discretizations on the overlap.}
    \label{fig:test4_iterations}
\end{figure}

\medskip

We conclude by presenting a final non-conforming \textbf{Test \#5}, where we consider a geometrical configuration with intersecting interfaces $\Gamma_1$ and $\Gamma_2$ that is not covered by the theory of Sects. \ref{sec:convergence-analysis} and \ref{sec:convergenceGMRES}. More precisely, let $\Omega$ be the domain formed by the union of the rectangle $\Omega_1=(0,1.4+\overline{x})\times(-0.8,0.8)$ and the circle $\Omega_2$ with center $(2,0)$ and radius $1$ as shown in Fig. \ref{fig:test5_geometry_solution} (left). The interfaces $\Gamma_1$ and $\Gamma_2$ intersect at the two points of coordinates $(1.4,\pm 0.8)$, and $\delta$ corresponds to the maximum distance between $\Gamma_1$ and $\Gamma_2$.
We consider a rectangular structured mesh in $\Omega_1$ with $\mathbb Q_1$ discretization, and a uniform unstructured triangular mesh in $\Omega_2$ with either $\mathbb P_1$ (Test 5A) or $\mathbb P_2$ (Test 5B) finite elements. Clearly, the discretization is non-conforming on the overlap.
The solution of problem \eqref{eq:globalProblem} for $\nu = \gamma = 1$ and
\begin{equation*}\label{eq:rhs}
    f(x,y)=\left\{\begin{array}{rl}
    -200 & \text{for } (x,y) \in \Omega \text{ with } x \leq 0.9 \text{ and } y \leq 0.4\\
    0 & \text{for } (x,y) \in \Omega \text{ with } x \leq 0.9 \text{ and } y > 0.4 \\
    0 & \text{for } (x,y) \in \Omega \text{ with } x > 0.9 \text{ and } y \leq 0.4\\
    200 & \text{for } (x,y) \in \Omega \text{ with } x > 0.9 \text{ and } y > 0.4
    \end{array}
    \right.
\end{equation*}
is shown in Fig. \ref{fig:test5_geometry_solution} (right), while in Fig. \ref{fig:test5_iterations}, we report the number of GMRES iterations versus $\delta$. In the considered test cases, the number of iterations is independent of $\delta$ for all variants of ICDD.

\begin{figure}
    \centering
\begin{tikzpicture}[scale=2]
    \draw[thick] (0,-0.8) rectangle (1.5,0.8);
    \draw[thick] (2,0) circle (1);
    \draw[dashed] (1,0.0)--(1.5,0.0);
    \node[below left] at (0.3,-0.8) {$(0,-0.8)$};
    \node[above right] at (1.45,0.6) {$(1.4+\overline{x},0.8)$};
    \node[right] at (2,0) {$(2,0)$};
    \node[right] at (1.5,0.4) {$\Gamma_1$};
    \node[left] at (1.0,0.0) {$\Gamma_2$};
    \node[above] at (1.25,0.0) {$\delta$};
    \fill (2,0) circle (0.5pt);
    \fill (1.4,-0.8) circle (1pt);
    \fill (1.4,+0.8) circle (1pt);
\end{tikzpicture}
\qquad
\includegraphics[width=0.4\textwidth]{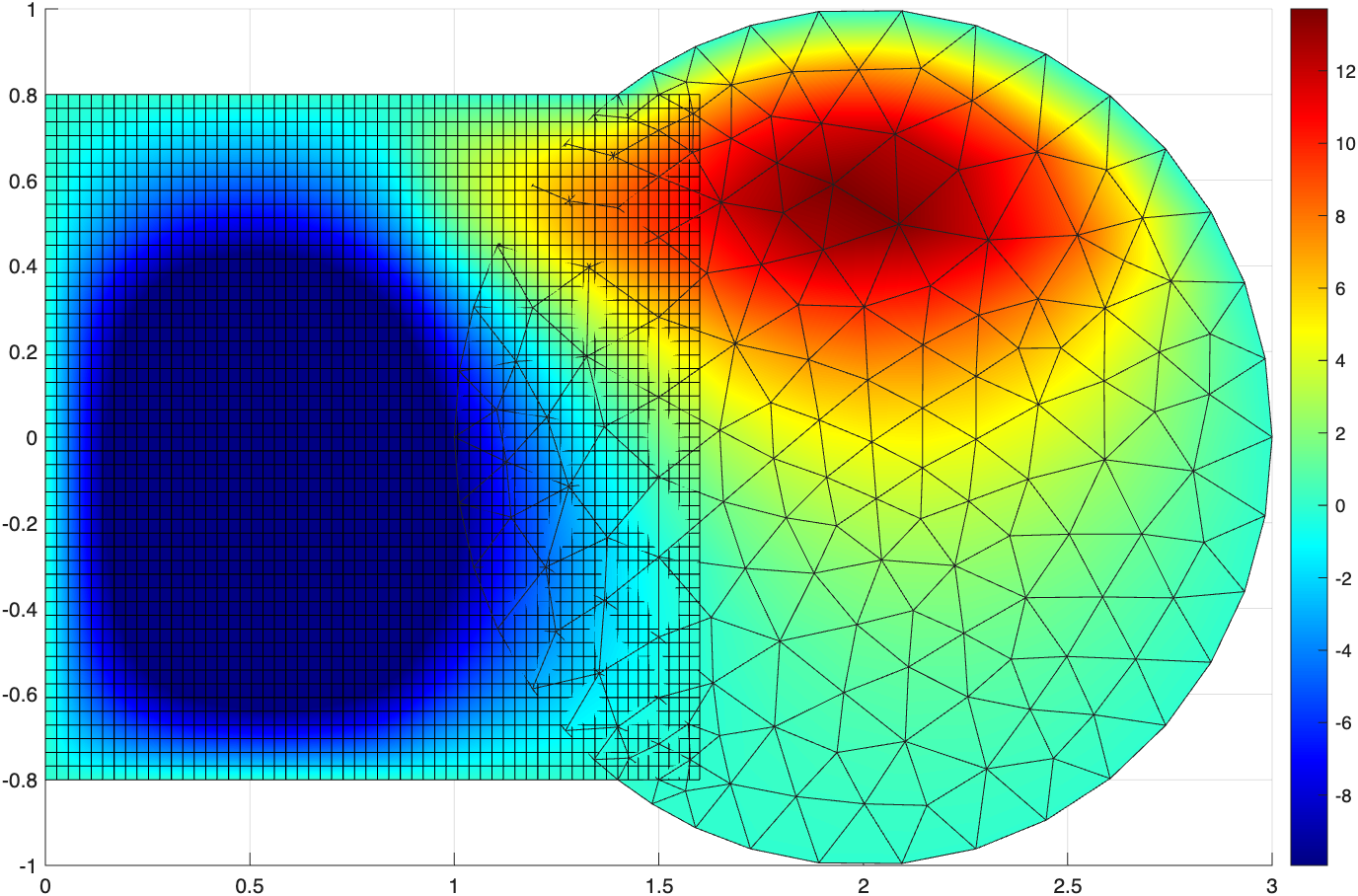}
\caption{\emph{Test \#5.} Geometrical setting (left) and computed solution for $\mathbb{Q}_1-\mathbb{P}_1$ elements (right).}
\label{fig:test5_geometry_solution}
\end{figure}

\begin{figure}
    \centering
    \includegraphics[width=\linewidth]{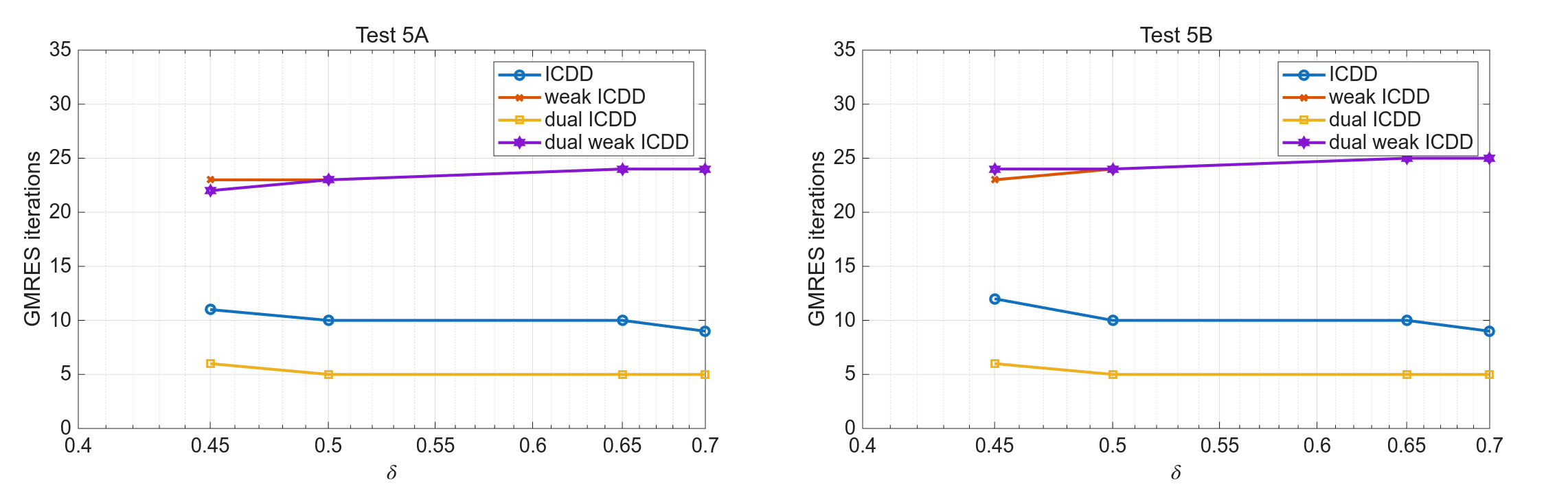}
    \caption{\emph{Test \#5.} GMRES iterations versus $\delta$. Different non-conforming discretizations on the overlap.}
    \label{fig:test5_iterations}
\end{figure}

\section{Conclusions and future work}\label{sec:conclusions}

In this paper, we have analyzed the spectral properties and convergence rate of the ICDD method for linear elliptic PDEs featuring possibly discontinuous coefficients, in 2D domains decomposed into two overlapping subregions. The ICDD method is formulated as an optimal control problem in which we minimize the jump between the subdomain solutions on the internal boundaries, named interfaces. The corresponding optimality system can be rewritten as a Schur complement equation whose unknowns are the degrees of freedom on the interfaces. 
In the particular case of conforming discretizations in the overlapping region, we analyzed the spectral properties of the Schur complement matrix. Then, we proved that the convergence rate of ICDD is independent of the mesh size $h$, behaves at most like $\mathcal O(\delta^{-1})$ when the overlap width $\delta$ tends to zero, and at most like $\mathcal O(p^{3/2}\log p)$ for increasing values of the local polynomial degree $p$. Numerical results verify the theoretical estimates about $\delta$ and $h$, and show that ICDD iterations are in fact independent of the polynomial degree $p$. Moreover, when local physical coefficients exhibit large variations between the two interfaces, ICDD iterations also become independent of $\delta$.
An analogous behavior has been observed numerically for non-conforming discretizations too, although this case is not covered by the theory presented in this work.

Since in the case of conforming discretizations on the overlap, the optimality system of ICDD coincides with the Substructured Restricted Additive Schwarz (SRAS) method \cite{SRAS2022}, the estimates we have proved in this work can be applied to analyze the convergence rate of SRAS, as well.

The ICDD method can be extended to consider more than two subdomains, including configurations with overlaps shared among three or more subdomains. In this case, we minimize the sum of the jumps of the solutions of any pair of adjacent subdomains on the corresponding interfaces. Thus, the analysis for the simple configuration with two subdomains provides insight into more general configurations.
In the case of many subdomains, the analysis versus the size $H$ of the subdomains is fundamental to understanding the scalability of ICDD. We note that in the present study, the dependence on $H$ is not discussed explicitly but is accounted for by the constant $c_{LH}$ that enters all relevant estimates. More precisely, when $H\to 0$, $c_{LH}$ behaves like $H^{-1}$. Such a result is consistent with classical convergence rate estimates of Schwarz methods without a coarse level. As in classical Schwarz methods, a coarse level should also be considered in ICDD to eliminate such a dependence, and this theoretical extension will be the object of future work. However, preliminary numerical results in the case of multiple subdomains have already been published in \cite{dgq_ell1}. In particular, in Test \#1 of \cite{dgq_ell1}, we applied the dual ICDD method (\ref{eq:schur_dual})--(\ref{eq:schur_dual_mat_rhs}) without a coarse level to a configuration with 16 subdomains and jumping coefficients, and obtained convergence in less than 30 iterations in all cases. In Test \#2 of \cite{dgq_ell1}, we reported the number of iterations for the dual ICDD method versus the local polynomial degrees for SEM discretizations in the case of non-conformity on the overlaps. These results agree with the results obtained in this work for the case of only two subdomains, namely that the number of ICDD iterations is independent of the local polynomial degrees.

\section*{Acknowledgements} The first author acknowledges funding through the EPSRC grant EP/V027603/1 and the QJMAM Fund for Applied Mathematics (grant QJMAM2023-R1). The second author is a member of GNCS – INdAM, and has received support from the project PRIN, MUR, Italy, CUP 20227K44ME, and ``Fondo di Ateneo per attivit\`a internazionali'' of Universit\`a di Brescia (Italy).

\appendix 

\section{Appendix}\label{sec:appendix}

\subsection{Equivalence between SRAS and ICDD for conforming meshes}\label{app:ICDD-SRAS}

Consider a conforming discretization in the overlap $\Omega_1 \cap \Omega_2$ as shown in Fig.~\ref{fig:exampleOverlap}.

\begin{figure}[bht]
\begin{center}
 	\begin{tikzpicture}[scale=0.75]
        \draw (-0.5,-0.5)--(5.0,-0.5);
        \draw (-0.5,2.0)--(5.0,2.0);
        \draw (-0.5,-0.5)--(-0.5,2.0);
        \draw (5.0,-0.5)--(5.0,2.0);
        \draw (1.75,-0.5)--(1.75,2.0);
        \draw (2.75,-0.5)--(2.75,2.0);
        %
        \fill[blue!40] (-0.25, -0.25) circle (2pt);
        \draw (-0.25, -0.25) circle (2pt);
        \fill[blue!40] ( 0.25, -0.25) circle (2pt);
        \draw ( 0.25, -0.25) circle (2pt);
        \fill[blue!40] ( 0.75, -0.25) circle (2pt);
        \draw ( 0.75, -0.25) circle (2pt);
        \fill[blue!40] ( 1.25, -0.25) circle (2pt);
        \draw ( 1.25, -0.25) circle (2pt);
        \fill[red!40] ( 1.75, -0.25) circle (2pt);
        \draw ( 1.75, -0.25) circle (2pt);
        \fill[gray!40] ( 2.25, -0.25) circle (2pt);
        \draw ( 2.25, -0.25) circle (2pt);
        \fill[yellow!40] ( 2.75, -0.25) circle (2pt);
        \draw ( 2.75, -0.25) circle (2pt);
        \fill[green!40] ( 3.25, -0.25) circle (2pt);
        \draw ( 3.25, -0.25) circle (2pt);
        \fill[green!40] ( 3.75, -0.25) circle (2pt);
        \draw ( 3.75, -0.25) circle (2pt);
        \fill[green!40] ( 4.25, -0.25) circle (2pt);
        \draw ( 4.25, -0.25) circle (2pt);
        \fill[green!40] ( 4.75, -0.25) circle (2pt);
        \draw ( 4.75, -0.25) circle (2pt);
        \fill[blue!40] (-0.25, 0.25) circle (2pt);
        \draw (-0.25, 0.25) circle (2pt);
        \fill[blue!40] ( 0.25, 0.25) circle (2pt);
        \draw ( 0.25, 0.25) circle (2pt);
        \fill[blue!40] ( 0.75, 0.25) circle (2pt);
        \draw ( 0.75, 0.25) circle (2pt);
        \fill[blue!40] ( 1.25, 0.25) circle (2pt);
        \draw ( 1.25, 0.25) circle (2pt);
        \fill[red!40] ( 1.75, 0.25) circle (2pt);
        \draw ( 1.75, 0.25) circle (2pt);
        \fill[gray!40] ( 2.25, 0.25) circle (2pt);
        \draw ( 2.25, 0.25) circle (2pt);
        \fill[yellow!40] ( 2.75, 0.25) circle (2pt);
        \draw ( 2.75, 0.25) circle (2pt);
        \fill[green!40] ( 3.25, 0.25) circle (2pt);
        \draw ( 3.25, 0.25) circle (2pt);
        \fill[green!40] ( 3.75, 0.25) circle (2pt);
        \draw ( 3.75, 0.25) circle (2pt);
        \fill[green!40] ( 4.25, 0.25) circle (2pt);
        \draw ( 4.25, 0.25) circle (2pt);
        \fill[green!40] ( 4.75, 0.25) circle (2pt);
        \draw ( 4.75, 0.25) circle (2pt);
        \fill[blue!40] (-0.25, 0.75) circle (2pt);
        \draw (-0.25, 0.75) circle (2pt);
        \fill[blue!40] ( 0.25, 0.75) circle (2pt);
        \draw ( 0.25, 0.75) circle (2pt);
        \fill[blue!40] ( 0.75, 0.75) circle (2pt);
        \draw ( 0.75, 0.75) circle (2pt);
        \fill[blue!40] ( 1.25, 0.75) circle (2pt);
        \draw ( 1.25, 0.75) circle (2pt);
        \fill[red!40] ( 1.75, 0.75) circle (2pt);
        \draw ( 1.75, 0.75) circle (2pt);
        \fill[gray!40] ( 2.25, 0.75) circle (2pt);
        \draw ( 2.25, 0.75) circle (2pt);
        \fill[yellow!40] ( 2.75, 0.75) circle (2pt);
        \draw ( 2.75, 0.75) circle (2pt);
        \fill[green!40] ( 3.25, 0.75) circle (2pt);
        \draw ( 3.25, 0.75) circle (2pt);
        \fill[green!40] ( 3.75, 0.75) circle (2pt);
        \draw ( 3.75, 0.75) circle (2pt);
        \fill[green!40] ( 4.25, 0.75) circle (2pt);
        \draw ( 4.25, 0.75) circle (2pt);
        \fill[green!40] ( 4.75, 0.75) circle (2pt);
        \draw ( 4.75, 0.75) circle (2pt);
        \fill[blue!40] (-0.25, 1.25) circle (2pt);
        \draw (-0.25, 1.25) circle (2pt);
        \fill[blue!40] ( 0.25, 1.25) circle (2pt);
        \draw ( 0.25, 1.25) circle (2pt);
        \fill[blue!40] ( 0.75, 1.25) circle (2pt);
        \draw ( 0.75, 1.25) circle (2pt);
        \fill[blue!40] ( 1.25, 1.25) circle (2pt);
        \draw ( 1.25, 1.25) circle (2pt);
        \fill[red!40] ( 1.75, 1.25) circle (2pt);
        \draw ( 1.75, 1.25) circle (2pt);
        \fill[gray!40] ( 2.25, 1.25) circle (2pt);
        \draw ( 2.25, 1.25) circle (2pt);
        \fill[yellow!40] ( 2.75, 1.25) circle (2pt);
        \draw ( 2.75, 1.25) circle (2pt);
        \fill[green!40] ( 3.25, 1.25) circle (2pt);
        \draw ( 3.25, 1.25) circle (2pt);
        \fill[green!40] ( 3.75, 1.25) circle (2pt);
        \draw ( 3.75, 1.25) circle (2pt);
        \fill[green!40] ( 4.25, 1.25) circle (2pt);
        \draw ( 4.25, 1.25) circle (2pt);
        \fill[green!40] ( 4.75, 1.25) circle (2pt);
        \draw ( 4.75, 1.25) circle (2pt);
        \fill[blue!40] (-0.25, 1.75) circle (2pt);
        \draw (-0.25, 1.75) circle (2pt);
        \fill[blue!40] ( 0.25, 1.75) circle (2pt);
        \draw ( 0.25, 1.75) circle (2pt);
        \fill[blue!40] ( 0.75, 1.75) circle (2pt);
        \draw ( 0.75, 1.75) circle (2pt);
        \fill[blue!40] ( 1.25, 1.75) circle (2pt);
        \draw ( 1.25, 1.75) circle (2pt);
        \fill[red!40] ( 1.75, 1.75) circle (2pt);
        \draw ( 1.75, 1.75) circle (2pt);
        \fill[gray!40] ( 2.25, 1.75) circle (2pt);
        \draw ( 2.25, 1.75) circle (2pt);
        \fill[yellow!40] ( 2.75, 1.75) circle (2pt);
        \draw ( 2.75, 1.75) circle (2pt);
        \fill[green!40] ( 3.25, 1.75) circle (2pt);
        \draw ( 3.25, 1.75) circle (2pt);
        \fill[green!40] ( 3.75, 1.75) circle (2pt);
        \draw ( 3.75, 1.75) circle (2pt);
        \fill[green!40] ( 4.25, 1.75) circle (2pt);
        \draw ( 4.25, 1.75) circle (2pt);
        \fill[green!40] ( 4.75, 1.75) circle (2pt);
        \draw ( 4.75, 1.75) circle (2pt);
        %
        \node[black] at (1.2, -1.0) {$\Omega_1$};
        \draw[<->, black,dotted]  (-0.5, -0.7) -- (2.75, -0.7);
        \node[black] at (3.5, -1.2) {$\Omega_2$};
        \draw[<->, black,dotted]  (1.75, -0.85) -- (5.0, -0.85);
        \node[black] at (2.75, 2.3) {$\Gamma_1$};
        \node[black] at (1.75, 2.3) {$\Gamma_2$};
 	\end{tikzpicture}
	\end{center}
  \caption{Schematic representation of a two-domain conforming overlapping decomposition.}
  \label{fig:exampleOverlap}
\end{figure}

Using the notation of \cite{SRAS2022}, the substructure associated with $\Omega_1$ is $S_1=\Gamma_2$, while the substructure associated with $\Omega_2$ is $S_2=\Gamma_1$. The global substructure is $S=S_1\cup S_2$. The linear system (\ref{eq:global_linear_system}) can be written as
\begin{equation}\label{eq:srasGlobal}
\begin{pmatrix}
    \widetilde{\mathsf A}_{11} & \widetilde{\mathsf A}_{1\Gamma_2} & \mathsf 0 & \mathsf 0 & \mathsf 0 \\
    \widetilde{\mathsf A}_{\Gamma_2 1} & \widetilde{\mathsf A}_{\Gamma_2 \Gamma_2} & \widetilde{\mathsf A}_{\Gamma_2 3} & \mathsf 0 & \mathsf 0 \\
    \mathsf 0 & \widetilde{\mathsf A}_{3 \Gamma_2} & \widetilde{\mathsf A}_{3 3} & \widetilde{\mathsf A}_{3 \Gamma_1} & \mathsf 0 \\
    \mathsf 0 & \mathsf 0 & \widetilde{\mathsf A}_{\Gamma_1 3} & \widetilde{\mathsf A}_{\Gamma_1 \Gamma_1} & \widetilde{\mathsf A}_{\Gamma_1 2} \\
    \mathsf 0 & \mathsf 0 & \mathsf 0 & \widetilde{\mathsf A}_{2 \Gamma_1} & \widetilde{\mathsf A}_{2 2}
\end{pmatrix}
\begin{pmatrix}
    \widetilde{\uv}_1 \\
    \widetilde{\uv}_{\Gamma_2} \\
    \widetilde{\uv}_3 \\
    \widetilde{\uv}_{\Gamma_1} \\
    \widetilde{\uv}_2
\end{pmatrix}
=
\begin{pmatrix}
    \widetilde{\fv}_1 \\
    \widetilde{\fv}_{\Gamma_2} \\
    \widetilde{\fv}_3 \\
    \widetilde{\fv}_{\Gamma_1} \\
    \widetilde{\fv}_2
\end{pmatrix}
\end{equation}
where the sub-indices are chosen in the following way: `1' corresponds to the dofs in $\Omega_1 \setminus \overline{\Omega}_2$ (blue circles in Fig.~\ref{fig:exampleOverlap}); `$\Gamma_2$' denotes the dofs on the interface $\Gamma_2$ (red circles in Fig.~\ref{fig:exampleOverlap}); `3' denotes the dofs in the overlapping region $\Omega_1 \cap \Omega_2$ (grey circles in Fig.~\ref{fig:exampleOverlap}); `$\Gamma_1$' indicates the dofs on the interface $\Gamma_1$ (yellow circles in Fig.~\ref{fig:exampleOverlap}); `2' corresponds to the dofs in $\Omega_2 \setminus \overline{\Omega}_1$ (green circles in Fig.~\ref{fig:exampleOverlap}). The block matrices are denoted by $\widetilde{}$ to distinguish them from the notation introduced in Sect. \ref{sec:discrete}. The substructure solution array is $\widetilde{\uv}_S= (\widetilde{\uv}_{\Gamma_2}, \, \widetilde{\uv}_{\Gamma_1})^T$.

The local restriction and prolongation operators are chosen so that they satisfy condition (4) in \cite{SRAS2022}. For example,
\begin{equation*}
    \mathsf R_1 =
     \begin{pmatrix}
         \mathsf I_1 & \mathsf 0 & \mathsf 0 & \mathsf 0 & \mathsf 0 \\
         \mathsf 0 & \mathsf I_{\Gamma_2} & \mathsf 0 & \mathsf 0 & \mathsf 0 \\
         \mathsf 0 & \mathsf 0 & \mathsf I_3 & \mathsf 0 & \mathsf 0
     \end{pmatrix},
    \qquad
    \mathsf R_2 =
     \begin{pmatrix}
         \mathsf 0 & \mathsf 0 & \mathsf I_3 & \mathsf 0 & \mathsf 0 \\
         \mathsf 0 & \mathsf 0 & \mathsf 0 & \mathsf I_{\Gamma_1} & \mathsf 0 \\
         \mathsf 0 & \mathsf 0 & \mathsf 0 & \mathsf 0 & \mathsf I_2
     \end{pmatrix},
\end{equation*}
\begin{equation*}
    \widetilde{\mathsf P}_1 =
     \begin{pmatrix}
         \mathsf I_1 & \mathsf 0 & \mathsf 0 \\
         \mathsf 0 & \mathsf I_{\Gamma_2} & \mathsf 0 \\
         \mathsf 0 & \mathsf 0 & \alpha\,\mathsf I_3 \\
         \mathsf 0 & \mathsf 0 & \mathsf 0 \\
         \mathsf 0 & \mathsf 0 & \mathsf 0  
     \end{pmatrix},
    \qquad
    \widetilde{\mathsf P}_2 =
     \begin{pmatrix}
         \mathsf 0 & \mathsf 0 & \mathsf 0 \\
         \mathsf 0 & \mathsf 0 & \mathsf 0 \\
         (1-\alpha)\,\mathsf I_3 & \mathsf 0 & \mathsf 0 \\
         \mathsf 0 & \mathsf I_{\Gamma_1} & \mathsf 0 \\
         \mathsf 0 & \mathsf 0 & \mathsf I_2
     \end{pmatrix},
\end{equation*}
where $\alpha \in (0,1)$, while the restriction and prolongation operators on the substructure $S$ are
\begin{equation*}
    \overline{\mathsf R} =
     \begin{pmatrix}
     \mathsf 0 & \mathsf 0 & \mathsf 0 & \mathsf I_{\Gamma_1} & \mathsf 0 \\
     \mathsf 0 & \mathsf I_{\Gamma_2} & \mathsf 0 & \mathsf 0 & \mathsf 0    
     \end{pmatrix},
     \qquad
     \overline{\mathsf P} =\overline{\mathsf R}^T.
\end{equation*}

On the other hand, for the considered geometrical configuration, system \eqref{eq:os2_alg_matrix_conf} can be explicitly written as
\begin{equation}\label{eq:os2_alg_simplified}
\left(
\begin{array}{ccc|ccc|cc}
    \widetilde{\mathsf A}_{11} & \widetilde{\mathsf A}_{1\Gamma_2} & \mathsf 0 & \mathsf 0 & \mathsf 0 & \mathsf 0 & \mathsf 0 & \mathsf 0 \\
    \widetilde{\mathsf A}_{\Gamma_2 1} & \widetilde{\mathsf A}_{\Gamma_2 \Gamma_2} & \widetilde{\mathsf A}_{\Gamma_2 3} & \mathsf 0 & \mathsf 0 & \mathsf 0 & \mathsf 0 & \mathsf 0 \\
    \mathsf 0 & \widetilde{\mathsf A}_{3 \Gamma_2} & \widetilde{\mathsf A}_{3 3} & \mathsf 0 & \mathsf 0 & \mathsf 0  & \widetilde{\mathsf A}_{3 \Gamma_1}  & \mathsf 0\\
    \hline
    \mathsf 0 & \mathsf 0 & \mathsf 0 & \widetilde{\mathsf A}_{3 3}& \widetilde{\mathsf A}_{3 \Gamma_1} & \mathsf 0 & \mathsf 0  & \widetilde{\mathsf A}_{3 \Gamma_2}\\
    \mathsf 0 & \mathsf 0 & \mathsf 0 & \widetilde{\mathsf A}_{\Gamma_1 3} & \widetilde{\mathsf A}_{\Gamma_1 \Gamma_1} & \widetilde{\mathsf A}_{\Gamma_1 2} & \mathsf 0 & \mathsf 0 \\
    \mathsf 0 & \mathsf 0 & \mathsf 0 & \mathsf 0 & \widetilde{\mathsf A}_{2 \Gamma_1} & \widetilde{\mathsf A}_{2 2} & \mathsf 0 & \mathsf 0 \\
    \hline
    \mathsf 0 & \mathsf 0 & \mathsf 0 & \mathsf 0 & -\mathsf I_{\Gamma_1} & \mathsf 0 & \mathsf I_{\Gamma_1} & \mathsf 0\\
    \mathsf 0 & -\mathsf I_{\Gamma_2} & \mathsf 0 & \mathsf 0 & \mathsf 0 & \mathsf 0  & \mathsf 0 &  \mathsf I_{\Gamma_2}
\end{array}
\right)
\begin{pmatrix}
    \widetilde{\uv}_{1,1} \\
    \widetilde{\uv}_{1,\Gamma_2}\\
    \widetilde{\uv}_{1,3} \\
    \hline
    \widetilde{\uv}_{2,3} \\
    \widetilde{\uv}_{2,\Gamma_1}\\
    \widetilde{\uv}_{2,2} \\
    \hline
    \blambda_1 \\
    \blambda_2
\end{pmatrix}
=
\begin{pmatrix}
    \widetilde{\fv}_1 \\
    \widetilde{\fv}_{\Gamma_2} \\
    \widetilde{\fv}_{1,3} \\
    \hline
    \widetilde{\fv}_{2,3} \\
    \widetilde{\fv}_{\Gamma_1} \\
    \widetilde{\fv}_2 \\
    \hline
    \mathsf 0\\
    \mathsf 0
\end{pmatrix}
\end{equation}
where the first sub-index of $\widetilde{\uv}_{i,j}$ is used to identify quantities in the subdomains $\Omega_1$ and $\Omega_2$, respectively. Notice that
\begin{equation*}
    \mathsf A_{11} =
    \begin{pmatrix}
        \widetilde{\mathsf A}_{11} & \widetilde{\mathsf A}_{1\Gamma_2} & \mathsf 0\\
        \widetilde{\mathsf A}_{\Gamma_2 1} & \widetilde{\mathsf A}_{\Gamma_2 \Gamma_2} & \widetilde{\mathsf A}_{\Gamma_2 3} \\
        \mathsf 0 & \widetilde{\mathsf A}_{3 \Gamma_2} & \widetilde{\mathsf A}_{3 3}
    \end{pmatrix},
    \quad
    \mathsf A_{22} = 
    \begin{pmatrix}
        \widetilde{\mathsf A}_{3 3} & \widetilde{\mathsf A}_{3 \Gamma_1} & \mathsf 0 \\
        \widetilde{\mathsf A}_{\Gamma_1 3} & \widetilde{\mathsf A}_{\Gamma_1 \Gamma_1} & \widetilde{\mathsf A}_{\Gamma_1 2} \\
        \mathsf 0 & \widetilde{\mathsf A}_{2 \Gamma_1} & \widetilde{\mathsf A}_{2 2} \\
    \end{pmatrix},
    \quad
    \mathsf{A}_{1\Gamma_1} =
    \begin{pmatrix}
    \mathsf{0} \\ \mathsf{0} \\ \widetilde{\mathsf{A}}_{3\Gamma_1} 
    \end{pmatrix},
    \quad
    \mathsf{A}_{2\Gamma_2} =
    \begin{pmatrix}
    \widetilde{\mathsf{A}}_{3\Gamma_2} \\ \mathsf{0} \\ \mathsf{0} 
    \end{pmatrix},
\end{equation*}
where $\mathsf{A}_{11}$, $\mathsf{A}_{22}$, $\mathsf{A}_{1\Gamma1}$ and $\mathsf{A}_{2\Gamma_2}$ are the local matrices in \eqref{eq:os2_alg_matrix_conf}, while $\mathsf R_{\Gamma_1 2}=\begin{array}{ccc}[\mathsf 0 & \mathsf I_{\Gamma_1} & \mathsf 0]\end{array}$, and
$\mathsf R_{\Gamma_2 1}=\begin{array}{ccc}[\mathsf 0 & \mathsf I_{\Gamma_2} & \mathsf 0]\end{array}$. Quantities indicated with the same notation in \eqref{eq:srasGlobal} and \eqref{eq:os2_alg_simplified} coincide.

Using the notation introduced above, the matrix in (\ref{eq:preco_sras}) becomes
\begin{eqnarray*}
    \overline{\mathsf R}\,\mathsf P_{RAS}^{-1}\,\mathsf A\,\overline{\mathsf P}
    &=&
    \overline{\mathsf R}\,(\widetilde{\mathsf P}_1 \mathsf A_{11}^{-1} \mathsf R_1 + \widetilde{\mathsf P}_2 \mathsf A_{22}^{-1} \mathsf R_2)\,\mathsf A\,\overline{\mathsf P} \\
    &=& \overline{\mathsf R}\,\widetilde{\mathsf P}_1\,\mathsf A_{11}^{-1} \mathsf R_1\,\mathsf A\,\overline{\mathsf P} + \overline{\mathsf R}\,\widetilde{\mathsf P}_2 \mathsf A_{22}^{-1} \mathsf R_2\,\mathsf A\,\overline{\mathsf P} \\
    &=& \phantom{+}
    \overline{\mathsf R}\,\widetilde{\mathsf P}_1\,\mathsf A_{11}^{-1} \, 
    \left( 
    \begin{pmatrix}
     \widetilde{\mathsf A}_{11} & \widetilde{\mathsf A}_{1 \Gamma_2} & \mathsf 0 & \mathsf 0 & \mathsf 0 \\
     \widetilde{\mathsf A}_{\Gamma_2 1} & \widetilde{\mathsf A}_{\Gamma_2 \Gamma_2} & \widetilde{\mathsf A}_{\Gamma_2 3} & \mathsf 0 & \mathsf 0 \\
     \mathsf 0 & \widetilde{\mathsf A}_{3 \Gamma_2} & \widetilde{\mathsf A}_{3 3} & \mathsf 0 & \mathsf 0
    \end{pmatrix}
    +
    \begin{pmatrix}
        \mathsf 0 & \mathsf 0 & \mathsf 0 & \mathsf 0 & \mathsf 0 \\
        \mathsf 0 & \mathsf 0 & \mathsf 0 & \mathsf 0 & \mathsf 0 \\
        \mathsf 0 & \mathsf 0 & \mathsf 0 & \widetilde{\mathsf A}_{3 \Gamma_1} & \mathsf 0 \\
    \end{pmatrix}
    \right)\,\overline{\mathsf P} \\
    && +
    \overline{\mathsf R}\,\widetilde{\mathsf P}_2\,\widetilde{\mathsf A}_{22}^{-1} \, 
    \left( 
    \begin{pmatrix}
     \mathsf 0 & \mathsf 0 & \widetilde{\mathsf A}_{3 3} & \widetilde{\mathsf A}_{3 \Gamma_1} & \mathsf 0 \\
     \mathsf 0 & \mathsf 0 & \widetilde{\mathsf A}_{\Gamma_1 3} & \widetilde{\mathsf A}_{\Gamma_1 \Gamma_1} & \widetilde{\mathsf A}_{\Gamma_1 2} \\
     0 & \mathsf 0 & \mathsf 0 & \widetilde{\mathsf A}_{2 \Gamma_1} & \widetilde{\mathsf A}_{2 2}
    \end{pmatrix}
    +
    \begin{pmatrix}
        \mathsf 0 & \widetilde{\mathsf A}_{3 \Gamma_2} & \mathsf 0 & \mathsf 0 & \mathsf 0 \\
        \mathsf 0 & \mathsf 0 & \mathsf 0 & \mathsf 0 & \mathsf 0 \\
        \mathsf 0 & \mathsf 0 & \mathsf 0 & \mathsf 0 & \mathsf 0 \\
    \end{pmatrix}
    \right)\,\overline{\mathsf  P} \\
    &=&
    \begin{pmatrix}
        \mathsf I_{\Gamma_1} & \mathsf 0 \\
        \mathsf 0 & \mathsf I_{\Gamma_2}
    \end{pmatrix}
    +
    \begin{pmatrix}
     \mathsf 0 & \mathsf 0 & \mathsf 0 \\
     \mathsf 0 & \mathsf I_{\Gamma_2} & \mathsf 0
    \end{pmatrix}
    \mathsf A_{11}^{-1}
    \begin{pmatrix}
        \mathsf 0 & \mathsf 0 \\
        \mathsf 0 & \mathsf 0 \\
        \widetilde{\mathsf A}_{3 \Gamma_1} & \mathsf 0
    \end{pmatrix}
    +
    \begin{pmatrix}
     \mathsf 0 & \mathsf I_{\Gamma_1} & \mathsf 0 \\
     \mathsf 0 & \mathsf 0 & \mathsf 0
    \end{pmatrix}
    \widetilde{\mathsf A}_{22}^{-1}
    \begin{pmatrix}
        \mathsf 0 & \widetilde{\mathsf A}_{3 \Gamma_2} \\
        \mathsf 0 & \mathsf 0 \\
        \mathsf 0 & \mathsf 0
    \end{pmatrix},
\end{eqnarray*}
where we have used the definition of the matrices $\overline{\mathsf{R}}$, $\overline{\mathsf{P}}$, $\widetilde{\mathsf{P}}_1$ and $\widetilde{\mathsf{P}}_2$. The resulting matrix coincides with the Schur complement system matrix $\mathsf{\Sigma}$ defined in \eqref{eq:schur_2_conforming}.

Following an analogous procedure, one can verify that the right-hand side of \eqref{eq:preco_sras} coincides with the right-hand side $\mathsf{\boldsymbol{\chi}}$ of \eqref{eq:schur_2_conforming}.

Finally, we observe that $\uv_S$ coincides with $\blambda$.
Therefore, we can conclude that, in the case of two conforming overlapping subdomains, the ICDD method coincides with SRAS.


\subsection{Proof of Theorem \ref{thm:ls}}\label{appendix:proof}

Let us first recall that
for any bounded domain $\domain\subset {\mathbb R}^d$ ($d\geq 2$) of diameter $H$ with Lipschitz boundary $\partial\domain$, and for any $u\in H^1(\domain)$, there exists a positive constant $c_\domain$ depending only on the shape of $\domain$, but independent of $H$, such that 
\begin{equation}\label{trace_L2bordo}
\|u\|^2_{L^2(\partial\domain)}\leq c_\domain\left(\frac{1}{H}\|u\|^2_{L^2(\domain)}+
H\|\nabla u\|^2_{L^2(\domain)}\right).
\end{equation}
The latter estimate can be proved starting from the trace inequality $\|u\|_{H^{1/2}(\partial\domain)}\leq c_\domain^{1/2} \|u\|_{H^1(\domain)}$ and by applying scaling arguments \cite[Sect. 1.3 and Appendix A.2]{toselli-widlund}.

\bigskip

We now proceed to prove Theorem \ref{thm:ls}.

\smallskip

The assumptions made in Theorem \ref{thm:ls} on both $\domain$ and $\gamma_t$ ensure that the hypotheses of Lemma 3.6 of \cite{loisel_szyld_nummath} are satisfied, so that (\ref{eq:ls}) is an immediate consequence of the mentioned Lemma.
To prove (\ref{eq:CLS_behaviour}), we follow the ideas of the proof of Lemma 3.6 of \cite{loisel_szyld_nummath}.

\smallskip

Let $u^\lambda=u^\lambda(x,y)$. For any $y\in(0,H_y)$ and $t\in(0,H_x)$, it holds
\begin{equation*}
(u^\lambda)^2(H_x,y)-(u^\lambda)^2(t,y)
= \int_t^{H_x} \frac{\partial}{\partial x} (u^\lambda(x,y))^2 \, dx
=\int_t^{H_x} 2u^\lambda(x,y)
\frac{\partial u^\lambda}{\partial x}(x,y) \, dx
\end{equation*}
and
\begin{equation}\label{eq:diff_tracce}
\|\lambda\|_{L^2(\Gamma)}^2-\|u^\lambda\|_{L^2(\gamma_t)}^2
=\int_0^{H_y}\int_t^{H_x} 2u^\lambda(x,y)
\frac{\partial u^\lambda}{\partial x}(x,y) \,dx \,dy.
\end{equation}
Recalling that $u^\lambda$ is the harmonic extension of $\lambda$ so that $u^\lambda=0$ on $\partial\domain\setminus\Gamma$ and $-\nabla \cdot (\nu \nabla u^\lambda) + \gamma \, u^\lambda=0$ a.e. in $\domain$, by using Green's formula on $\domain_s=(0,s)\times(0,H_y)$, for any $s\in(0,H_x)$ and $\gamma_s=\{s\}\times(0,H_y)$, we find

\begin{eqnarray*}
\begin{array}{ll}
\displaystyle\int_{\gamma_s} \nu \, u^\lambda \, \frac{\partial u^\lambda}{\partial x} \, d\gamma_s  &
\displaystyle =
\int_{\partial\domain_s} \nu \, u^\lambda \, \frac{\partial u^\lambda}{\partial n}  \, d(\partial\domain_s) \\
&\displaystyle =
\int_{\domain_s} ( \nu |\nabla u^\lambda |^2 + \gamma \, (u^\lambda)^2  ) \, d\domain_s -
\int_{\domain_s} \underbrace{(-\nabla\cdot( \nu \nabla u^\lambda)  + \gamma u^\lambda)}_{=0\  a.e.} \, u^\lambda \, d\domain_s \\
&\displaystyle =\int_0^s g(x)dx,
\end{array}
\end{eqnarray*}
where 
$g(x)=\displaystyle \int_0^{H_y}\nu(x,y)|\nabla u^\lambda(x,y)|^2+ \gamma(x,y)(u^\lambda(x,y))^2 dy.$ 

Then, by applying the Fubini theorem and exchanging the order of derivation, we have
\begin{eqnarray}
    \int_t^{H_x}\int_{\gamma_s} \nu u^\lambda \frac{\partial u^\lambda}{\partial x} dy\,ds &=&
    \int_t^{H_x} \int_0^s g(x)dx\, ds \nonumber \\
    &=& \int_0^t\int_t^{H_x}g(x)ds\, dx+ \int_t^{H_x}\int_x^{H_x}g(x) ds\, dx \nonumber\\
    &=& \int_0^t g(x)(H_x-t)dx+\int_t^{H_x}g(x)(H_x-x)dx=\int_0^{H_x}g(x)\mu_t(x)dx \nonumber \\
    &=&\int_0^{H_y}\int_0^{H_x}(\nu|\nabla u^\lambda|^2 +\gamma (u^\lambda)^2)\mu_t(x)dx\,dy=: \|u^\lambda\|_{\mu_t}^2, \label{eq:norma_mu}
\end{eqnarray}
where 
\begin{eqnarray*}
\mu_t(x)=\left\{\begin{array}{ll}
H_x-t & 0<x<t\\
H_x-x & t\leq x \leq H_x
\end{array}\right.
\end{eqnarray*}
is a positive weighting function and $\|\cdot\|_{\mu_t}$ defines a norm on $H^1(\domain)$ .

This latter norm is not equivalent to the norm $\|\cdot\|_{H^1(\domain)}$, but to the canonical norm $\|\cdot\|_{H^1(\domain,\mu_t)}$ of the weighted Sobolev space $H^1(\domain, \mu_t)= \{u:\domain\to{\mathbb R}:\ \|u\|_{H^1(\domain, \mu_t)}=\left(\int_\domain (|\nabla u|^2+u^2)\mu_t({\bf x})d{\bf x}\right)^{1/2} <\infty \}$ (see, e.g. \cite[Ch. 3]{kufner}).
This means that there exist two positive constants $c_1\leq c_2$ depending on $\nu$, $\gamma$, 
$H_x$, and possibly on $H_x-t$, such that
\begin{equation}\label{equiv.norme.pesi}
c_1 \|u\|^2_{H^1(\domain,\mu_t)}\leq \|u\|^2_{\mu_t}\leq
c_2 \|u\|^2_{H^1(\domain,\mu_t)}
\qquad \forall u\in H^1(\domain).
\end{equation}

Set $\underline\gamma=\inf_{{\bf x}\in\domain} \gamma({\bf x})$ and $\underline\nu=\inf_{{\bf x}\in\domain} \nu({\bf x})$. Then, $c_2=\max\{\|\nu\|_\infty,\|\gamma\|_\infty\}$ while

\begin{equation*}
c_1 =
\left\{
\begin{array}{ll}
4\underline\nu/(4+2t H_x+(H_x-t)^2) & \text{if } \underline\gamma=0 ,\\
\min\{\underline\nu,\underline\gamma\} & \text{if } \underline\gamma \not= 0 .
\end{array}
\right.
\end{equation*}

The expression of $c_1$ in the case $\gamma=0$ is obtained by exploiting the fact that $u=0$ when $x=0$, the fact that $\mu_t$ is independent of $y$, and following standard procedures to compute the Poincar\'e constant.

From (\ref{eq:diff_tracce})--(\ref{eq:norma_mu}), it follows that 
\begin{eqnarray}\label{eq:stima_sup}
\begin{array}{ll}
\|\lambda\|^2_{L^2(\Gamma)}-\|u^\lambda\|^2_{L^2(\gamma_t)}&\displaystyle =
\int_0^{H_y}\int_t^{H_x}2u^\lambda(x,y)\frac{\partial u^\lambda}{\partial x}(x,y)dx dy\\[3mm]
&\displaystyle \leq \frac{2}{\underline{\nu}}\int_t^{H_s}\left(\int_{\gamma_s}\nu u^\lambda\frac{\partial u^\lambda}{\partial x} d\gamma_s\right)ds = \frac{2}{\underline\nu}\|u^\lambda\|_{\mu_t}^2.
\end{array}
\end{eqnarray}
and, similarly,
\begin{equation}\label{eq:stima_inf}
\|\lambda\|^2_{L^2(\Gamma)}-\|u^\lambda\|^2_{L^2(\gamma_t)}
\geq \frac{2}{\|\nu\|_\infty}\|u^\lambda\|_{\mu_t}^2.
\end{equation}

Now we set $\domain_t=(0,t)\times (0,H_y)$, thanks to the definition of ${\mu_t}({\bf x})$ and (\ref{equiv.norme.pesi}) it holds
\begin{equation}\label{eq:equiv_norme_pesate} 
(H_x-t)\|u^\lambda\|^2_{H^1(\domain_t)}\leq 
\|u^\lambda\|^2_{H^1(\domain,{\mu_t})}\leq
\frac{1}{c_1}\|u^\lambda\|^2_{\mu_t}.
\end{equation}

By the $L^2-$trace inequality (\ref{trace_L2bordo}), and denoting $H_t = \text{diam}(\domain_t) = \max\{t,H_y\}$, we have
\begin{equation}\label{eq:stima_traccia_gammat}
 \|u^\lambda\|^2_{L^2(\gamma_t)}
\leq c \, \max \{ H_t, H_t^{-1} \} \, \| u^\lambda\|_{H^1(\domain_t)}^2
\leq c\,\frac{\max\{ H_t, H_t^{-1} \}}{c_1(H_x-t)}\|u^\lambda\|^2_{\mu_t}
\end{equation}
where $c$ is a positive constant only depending on the shape of $\domain_t$.

Starting from (\ref{eq:stima_sup}), and by applying (\ref{eq:stima_traccia_gammat}), we obtain
\begin{equation}\label{eq:stima_traccia_Gamma}
    \|\lambda\|^2_{L^2(\Gamma)}\leq \left(\frac{2}{\underline\nu}+\frac{c\,\max\{ H_t, H_t^{-1} \}}{c_1(H_x-t)}\right)\|u^\lambda\|^2_{\mu_t}
\end{equation}
and, finally, by (\ref{eq:stima_inf}) and (\ref{eq:stima_traccia_Gamma}), we conclude that

\begin{equation*} 
\|u^\lambda\|^2_{L^2(\gamma_t)}\leq \left(1-\frac{\frac{2}{\|\nu\|_\infty}}{\frac{2}{\underline{\nu}}+\frac{c\max\{H_t,H_t^{-1}\}}{c_1(H_x-t)}}\right)
\|\lambda\|^2_{L^2(\Gamma)}
=\left(1-\frac{1}{\frac{\|\nu\|_\infty}{\underline{\nu}}+\frac{c\,\|\nu\|_\infty\,\max\{H_t,H_t^{-1}\}}{2\,c_1\,(H_x-t)}}\right)
\|\lambda\|^2_{L^2(\Gamma)} .
\end{equation*}

Notice that the quantity in brackets is positive and smaller than $1$.

\subsection{Discrete norm}\label{appendix:mass}

Let $(\xi_q,\omega_q)$ for $q=0,\ldots,p$, be the nodes and weights of Legendre-Gauss-Lobatto quadrature formulas \cite{chqz06} on $[-1,1]$ such that, for any $u,\ v\in C^0([-1,1])$, 
\begin{equation}\label{eq:LGL-innerproduct}
(u,v)_{LGL}=\sum_{q=0}^p u(\xi_q) v(\xi_q)\omega_q\simeq \int_{-1}^1 u(x) v(x) dx.
\end{equation}
Then, we define the discrete norm $\|u\|_{LGL}=(u,u)^{1/2}_{LGL}$.

The following estimate establishes the equivalence between the $L^2-$norm and the discrete LGL-norm for any polynomial of degree less than or equal to $p$ \cite[Sect. 5.3]{chqz06}:
\begin{equation}\label{eq:normEquivalence}
\|v_p\|_{L^2(-1,1)}\leq \|v_p\|_{LGL}\leq \sqrt{3}\|v_p\|_{L^2(-1,1)}\qquad \forall v_p\in \mathbb P_p.
\end{equation}
Let us denote by $\{\mu_i\}\in\mathbb P_p$ the Lagrange basis on $(-1,1)$ with support points given by the LGL nodes $\xi_q$, wih $q=0,\ldots, p$. The one-dimensional mass matrix associated with the discrete inner product (\ref{eq:LGL-innerproduct}) reads
\begin{equation*}
(\mathsf M_{LGL})_{ij}=\sum_{q=0}^p \mu_i(\xi_q)\mu_j(\xi_q)\omega_q=\omega_i\delta_{ij}\qquad \mbox{ for }i,j=0,\ldots,p,
\end{equation*}
where $\delta_{ij}$ is the Kronecker delta. Thus, $\mathsf M_{LGL}$ is diagonal, and its minimum and maximum eigenvalues satisfy the following estimates \cite[Sect. 4]{bm-handbook}, \begin{equation}\label{eq:extrema_eig_MLGL}
\lambda_{min}(\mathsf M_{LGL})=\frac{2}{p(p+1)}, \hskip 1.cm 
\lambda_{max}(\mathsf M_{LGL})\leq \frac{c}{p},
\end{equation}
where $c>0$ is independent of $p$.

Let $\mathsf M$ be the exact mass matrix on $(-1,1)$ with $\mathsf M_{ij}=\int_{-1}^1 \mu_i(x)\mu_j(x)dx$.
An immediate consequence of (\ref{eq:normEquivalence}) is that, if $\boldsymbol v$ is the array of the degrees of freedom of the polynomial $v_p\in\mathbb P_p$, then 
$\boldsymbol{v}^T \mathsf M \boldsymbol{v}=\|v_p\|^2_{L^2(-1,1)}$, 
$\boldsymbol{v}^T \mathsf M_{LGL} \boldsymbol{v}=\|v_p\|^2_{LGL}$, and
\begin{equation}\label{eq:normEquivalenceMatrix}
\boldsymbol{v}^T \mathsf M \boldsymbol{v}\leq \boldsymbol v^T\mathsf M_{LGL} \boldsymbol v \leq 3 \boldsymbol v^T \mathsf M \boldsymbol v \qquad \forall \boldsymbol v\in \mathbb R^{p+1}.
\end{equation}

\subsection{Estimate of $\lambda_{min}(\widetilde{\mathsf{\Sigma}}_s)$ for non-conforming meshes}\label{app:lambdamin}

Following \cite[formulas (5.4.33), (5.4.34)]{chqz06} and \cite[Theorem 14.2]{bm-handbook}), we begin by providing an interpolation estimate for SEM (or $hp$-FEM). More precisely, let $I\subset \mathbb R$ be a bounded interval, $h>0$ be the maximum diameter of the elements of a family of partitions $\mathcal{T}_h$ of $I$, and consider polynomial degree $p\geq 1$. Let $\mathscr{I}_{h,p}$ be the composite Lagrange interpolation operator of local degree $p$, with the Legendre--Gauss--Lobatto nodes in each element as interpolation nodes. For any real $k\in[0,1]$ and integer $s\geq 1$, there exists a positive constant $c_I$ independent of both $h$ and $p$ such that
\begin{equation}\label{eq:interpolation_SEM}
\|u-\mathscr{I}_{h,p} u\|_{H^k(I)} \leq c_I \, h^{\mu-k}p^{k-s}|u|_{H^{s,p}(I)}\qquad \forall u\in H^s(I),
\end{equation}
where $\mu=\min(s,p+1)$ and $|u|_{H^{s,p}(I)}=\left(\sum_{m=\min(s,p+1)}^s \|D^{m} u\|^2_{L^2(I)}\right)^{1/2}$.
This estimate follows from formulas (5.4.33), (5.4.34) of \cite{chqz06}, scaling arguments (see, e.g., \cite[Lemma 4.2]{babuska_suri_m2an}), and interpolation between Sobolev spaces (see, e.g., \cite[Chapter 1]{bm-handbook}).

Moreover, for any globally continuous function $u$ on $I$ of degree $p$ on each element of $\mathcal T_h$, it holds
\begin{equation}\label{eq:inverse_schwab}
|u|_{H^1(I)}\leq 2\sqrt{3} \, \frac{p^2}{h} \| u\|_{L^2(I)} \, .
\end{equation}
For the proof see \cite[Theorem 3.91]{schwab_book}.

Then, the following result for the interpolation operator $\mathscr I_{\Gamma_k}$ defined in \eqref{eq:Lagrange_interpol} holds.

\begin{lemma}\label{lemma:interpolazione}
Consider two non-conforming discretizations on the overlap, let Assumptions \ref{ass_mesh} and \ref{ass_decomposition} be satisfied, $\mathscr I_{\Gamma_k}$ (for $k=1,2$) be the interpolation operator defined in (\ref{eq:Lagrange_interpol}), and $\ell=3-k$. Then, there exists a positive constant $c_S>0$ independent of the local polynomial degrees $p_k$, $ p_\ell$, and mesh sizes $h_k$, $h_\ell$ such that, denoting by $\mu_\ell$ the trace on $\Gamma_k$ of a suitable function $u_{\ell}\in V_{\ell,h_\ell}$, it holds
\begin{equation*}
    \| \mu_\ell-\mathscr I_{\Gamma_k}\mu_\ell 
     \|_{L^2(\Gamma_k)} \leq c_S \frac{h_k}{h_\ell} \frac{p_\ell^2}{p_k} \, \|\mu_\ell\|_{L^2(\Gamma_k)}.
\end{equation*}
Moreover, the following stability estimate holds
\begin{equation}\label{eq:interpolation_stability}
     \| \mathscr I_{\Gamma_k}\mu_\ell\|_{L^2(\Gamma_k)}  \leq  S_{k\ell} \, \|\mu_\ell\|_{L^2(\Gamma_k)},
     \quad \text{with} \; S_{k\ell} = 1+c_S\frac{h_k}{h_\ell}\frac{p_\ell^2}{p_k}\,.
\end{equation}
\end{lemma}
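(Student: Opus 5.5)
The plan is to decompose the interpolation error with a triangle inequality and an elementwise interpolation estimate. On $\Gamma_k$ the function $\mu_\ell$ is the trace of $u_\ell\in V_{\ell,h_\ell}$, so it is continuous and piecewise polynomial of degree $p_\ell$ on the partition induced on $\Gamma_k$ by ${\cal T}_{\ell,h_\ell}$. Since the two meshes are non-conforming, this partition differs from the one induced by ${\cal T}_{k,h_k}$, on which $\mathscr I_{\Gamma_k}$ acts with degree $p_k$. Hence $\mu_\ell$ belongs only to $H^1(\Gamma_k)$, not to the discrete space, and the natural tool is estimate \eqref{eq:interpolation_SEM} with $k=0$ and $s=1$. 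This gives $\mu=\min(1,p_k+1)=1$, and therefore
\begin{equation*}
\|\mu_\ell-\mathscr I_{\Gamma_k}\mu_\ell\|_{L^2(\Gamma_k)}\leq c_I\, h_k\, p_k^{-1}\, |\mu_\ell|_{H^1(\Gamma_k)} .
\end{equation*}

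Next I control the $H^1$ seminorm by the $L^2$ norm with the inverse inequality \eqref{eq:inverse_schwab}. This applies because $\mu_\ell$ is globally continuous on $\Gamma_k$ and piecewise of degree $p_\ell$ on elements of size comparable to $h_\ell$; the quasi-uniformity in Assumption \ref{ass_mesh} allows a single mesh size on $\Gamma_k$. The inequality gives $|\mu_\ell|_{H^1(\Gamma_k)}\le 2\sqrt3\,\frac{p_\ell^2}{h_\ell}\|\mu_\ell\|_{L^2(\Gamma_k)}$. Combining the two bounds yields the first claim with $c_S=2\sqrt3\,c_I$, which is independent of $h_k,h_\ell,p_k,p_\ell$.

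The stability estimate \eqref{eq:interpolation_stability} then follows from the triangle inequality
$\|\mathscr I_{\Gamma_k}\mu_\ell\|\le\|\mu_\ell\|+\|\mu_\ell-\mathscr I_{\Gamma_k}\mu_\ell\|$.

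The main obstacle I expect is the justification of the two ingredients on $\Gamma_k$ itself. First, the trace of the $\ell$-mesh on the segment $\Gamma_k$ (which, by Assumption \ref{ass_decomposition}, is a vertical segment crossing the interior of $\Omega_\ell$) must form a genuine one-dimensional partition into elements of size $\sim h_\ell$. Cut elements can be arbitrarily small pieces, and the inverse inequality \eqref{eq:inverse_schwab} requires element sizes bounded below. I would handle this by observing that on each intersected element $T_m$ the restriction is a polynomial of degree $p_\ell$ on a chord of $T_m$. Applying the inverse inequality on the full chord, whose length is comparable to $h_\ell$ for generic positions, and absorbing the constant into $c_S$ should suffice; this needs the regularity of the affine or $C^1$ maps in Assumption \ref{ass_mesh}. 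Second, \eqref{eq:interpolation_SEM} must be used elementwise on the $k$-partition with $s=1$, summing the squared local estimates. For quads the restriction of $\mathbb Q_{p_\ell}\circ{\bf F}^{-1}$ to a line is a polynomial only when the map is affine, so I would restrict the argument to affine or straight-sided elements along $\Gamma_k$.
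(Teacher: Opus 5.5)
Your main line is the paper's proof. You apply \eqref{eq:interpolation_SEM} with $k=0$, $s=1$ on the partition of $\Gamma_k$ induced by $\mathcal T_{k,h_k}$, then \eqref{eq:inverse_schwab} with $h=h_\ell$, $p=p_\ell$ (so $c_S=2\sqrt3\,c_I$), and finally the triangle inequality for \eqref{eq:interpolation_stability}. The paper never discusses the cut-element issue you raise. It only notes that $\mu_\ell$ is continuous and piecewise polynomial on the vertical segment $\Gamma_k$, then applies \eqref{eq:inverse_schwab} with mesh size $h_\ell$. This tacitly assumes that the pieces of $\Gamma_k$ cut out by $\mathcal T_{\ell,h_\ell}$ have length comparable to $h_\ell$, which holds, e.g., for axis-parallel rectangular meshes.

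Your proposed repair for that issue does not work. The chord $\Gamma_k\cap T_m$ is the piece itself, so there is no longer ``full chord'' to apply the inequality on. Also, ``comparable to $h_\ell$ for generic positions'' gives no constant that is uniform in the position of $\Gamma_k$.

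The step genuinely fails. Take structured $\mathbb P_1$ triangles: squares of side $h=h_\ell$ split by the diagonal from $(0,jh)$ to $(h,(j+1)h)$. Let $\Gamma_k=\{x=\epsilon\}$ with $0<\epsilon\ll h$, and let $u_\ell$ vanish on the vertex column $x=0$ with $u_\ell(h,jh)=(-1)^j$. Then $\mu_\ell=\pm\epsilon/h$ on pieces of length $h-\epsilon$, and $\mu_\ell$ has slope $\pm 2/h$ on pieces of length $\epsilon$. Hence $|\mu_\ell|_{H^1(\Gamma_k)}/\|\mu_\ell\|_{L^2(\Gamma_k)}\approx 2/\sqrt{\epsilon h}$, which is not bounded by $C/h$ as $\epsilon\to 0$.

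It gets worse. Take $p_k=1$ and $\epsilon\ll h_k\ll h_\ell$, with the transition layers lying inside $k$-elements. Then the interpolation error is of order $\sqrt{h_k/h_\ell}\,\|\mu_\ell\|_{L^2(\Gamma_k)}$, so the first estimate of the lemma itself fails for such $u_\ell$. No constant can be absorbed into $c_S$ to rescue it.

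So you must add, as a hypothesis, that the induced one-dimensional partition of $\Gamma_k$ is quasi-uniform with pieces of length $\gtrsim h_\ell$. This is what the word ``suitable'' in the statement silently covers; it does not follow from Assumption \ref{ass_mesh}, contrary to your remark about quasi-uniformity.

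A minor point on quads: you need affine maps, since a straight-sided bilinear map does not in general give polynomial traces. With affine maps the trace has degree up to $2p_\ell$, which only changes constants.
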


\begin{proof}
Since $u_\ell\in V_\ell$ and the interface $\Gamma_k$ is a vertical segment, the trace $\mu_\ell={u_\ell}_{|\Gamma_k}$ is globally continuous and a piecewise polynomial on $\Gamma_k$, thus $\mu_\ell\in H^1(\Gamma_k)$. Consider \eqref{eq:interpolation_SEM} and notice that the interpolation operator $\mathscr I_{h,p}$ is nothing else but $\mathscr I_{\Gamma_k}$ with $h=h_k$ and $p=p_k$. Then, by applying (\ref{eq:interpolation_SEM})  and (\ref{eq:inverse_schwab}), we have
\begin{equation*}
     \| \mu_\ell-\mathscr I_{\Gamma_k}\mu_\ell 
     \|_{L^2(\Gamma_k)} \leq c_I \, h_k \, p_k^{-1} |\mu_\ell|_{H^1(\Gamma_k)} 
     \leq c_S \, \frac{h_k}{h_\ell} \, \frac{p_\ell^2}{p_k} \, \|\mu_\ell\|_{L^2(\Gamma_k)},
\end{equation*}
where $c_S=2\sqrt{3}\,c_I$.
Estimate \eqref{eq:interpolation_stability} follows straightforwardly from the triangular inequality.
\end{proof}

\bigskip

Thanks to \eqref{eq:interpolation_stability} and \eqref{eq:stimal2_gg}, there holds
\begin{equation*}
    \| \mathscr{I}_{\Gamma_k}(u_{\ell,h_\ell\vert\Gamma_k}) \|_{L^2(\Gamma_k)}
    \leq
    S_{k\ell}  \| u_{\ell,h_\ell} \|_{L^2(\Gamma_k)}
    \leq
    S_{k\ell} \,
    C_{\delta,\ell} \,
    \| \zeta_{\ell,h_\ell} \|_{L^2(\Gamma_\ell)}.
\end{equation*}

To obtain a lower bound for $\lambda_{min}(\widetilde{\mathsf{\Sigma}}_s)$, we can follow an analogous procedure to Theorem \ref{thm:lowerbound_min_eig}. However, in this case, for $k,\ell=1,2$, $k \not= \ell$, we should consider
\begin{eqnarray*}
\int_{\Gamma_k} \mathscr I_{\Gamma_k} (u_{\ell,h_\ell}^{\zeta_\ell}{}_{|\Gamma_k}) \zeta_{k,h_k} 
&\leq&
\| \mathscr I_{\Gamma_k} (u_{\ell,h_\ell}^{\zeta_\ell}{}_{|\Gamma_k}) \|_{L^2(\Gamma_k)} \, \| \zeta_{k,h_k} \|_{L^2(\Gamma_k)} \\
&\leq&
S_{k\ell} \, 
\| u_{\ell,h_\ell}^{\zeta_\ell}{}_{|\Gamma_k} \|_{L^2(\Gamma_k)} \, \| \zeta_{k,h_k} \|_{L^2(\Gamma_k)} \\
&\leq& 
S_{k\ell} \, 
C_{\delta,\ell} \,
\| \zeta_{\ell,h_\ell} \|_{L^2(\Gamma_\ell)} \, \| \zeta_{k,h_k} \|_{L^2(\Gamma_k)} \\
&\leq& \frac{1}{2} \,
S_{k\ell} \, 
C_{\delta,\ell} \,
\left( \| \zeta_{\ell,h_\ell} \|_{L^2(\Gamma_\ell)}^2 +  \| \zeta_{k,h_k} \|_{L^2(\Gamma_k)}^2 \right) \, .
\end{eqnarray*}
Then,
\begin{equation*}
\boldsymbol{\zeta}^T \, \widetilde{\mathsf{\Sigma}} \, \boldsymbol{\zeta} 
\geq 
\frac{1}{2}
\left(
1 - S_{21} \, C_{\delta,1} + 
1 - S_{12} \, C_{\delta,2} 
\right)
\left(
\| \zeta_{\ell,h_\ell} \|_{L^2(\Gamma_\ell)}^2 +  \| \zeta_{k,h_k} \|_{L^2(\Gamma_k)}^2 \right) \, .
\end{equation*}
Since $S_{k\ell}>1$, it is not possible to guarantee that
\begin{equation*}
1 - S_{k\ell} \, C_{\delta,\ell} \, 
> 0\,,
\end{equation*}
so that, for non-conforming meshes, a positive lower bound for $\zetabv^T \, \widetilde{\mathsf\Sigma} \, \zetabv$ cannot be established. As a consequence, we cannot ensure the positivity of the minimum eigenvalue of the matrix $\widetilde{\mathsf{\Sigma}}_s$. This is confirmed by the numerical results presented in Sect. \ref{sec:numericalAnalysisNonConforming}, Test \#4.

\subsection{Convergence theory for GMRES}\label{appendix:gmres}

For the sake of completeness and clarity, we reformulate here Theorem 2.1 of \cite{beckermann2006} that is used for the analysis of convergence of GMRES iterations in Sect.~\ref{sec:convergenceGMRES}.

\begin{theorem}\label{thm:gmres-beckermann}
    Let $\mathsf A\in\mathbb R^{n\times n}$ be a positive real matrix and $\boldsymbol{r}^{(m)}$ the residual at the $m$th iteration of GMRES of the solution of a linear system with matrix $\mathsf{A}$. 
    Let $\beta\in(0,\frac{\pi}{2})$ be such that
    \begin{equation*}
    cos(\beta)=\frac{\lambda_{min}(\mathsf A_s)}{\|\mathsf A\|_2}\qquad \mbox{and let} \qquad \gamma_\beta:=2\sin\left(\frac{\beta}{4-2\beta/\pi}\right),
    \end{equation*}
    where $\mathsf{A}_s$ is the symmetric part of $\mathsf{A}$.
    Then, for $m \geq 1$, the relative residual at the $m$th iteration of GMRES can be bounded as
    \begin{equation}\label{eq:res_gmres}
    \frac{\|\boldsymbol{r}^{(m)}\|_2}{\|\boldsymbol{r}^{(0)}\|_2}\leq (2+2/\sqrt{3})(2+\gamma_\beta)\gamma_\beta^m.
    \end{equation}
    Moreover, for a given tolerance $\epsilon>0$, the stopping test $\frac{\|\boldsymbol{r}^{(m)}\|_2}{\|\boldsymbol{r}^{(0)}\|_2}\leq \epsilon$ is satisfied provided that
\begin{equation}\label{eq:iteration_estimate}
    m \geq \frac{\log\epsilon -\log((2+2/\sqrt{3})(2+\gamma_\beta))}
    {\log\gamma_\beta}.
\end{equation}
\end{theorem}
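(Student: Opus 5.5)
The first claim, \eqref{eq:res_gmres}, is exactly Theorem 2.1 of \cite{beckermann2006}, so I would not reprove it from scratch. I would only check that the hypotheses match and recall where the constants come from.

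\emph{Matching the hypotheses.} Since $\mathsf A$ is positive real, $\lambda_{min}(\mathsf A_s)>0$. The field of values $W(\mathsf A)=\{\boldsymbol x^T\!\mathsf A\boldsymbol x/\boldsymbol x^T\boldsymbol x\}$, in its complex version, is compact and convex. It lies in the half plane $\{\mathrm{Re}\,z\geq\lambda_{min}(\mathsf A_s)\}$ and in the disk $\{|z|\leq\|\mathsf A\|_2\}$. Hence $0\notin W(\mathsf A)$, and $W(\mathsf A)$ is seen from the origin under an angle of at most $2\beta$, where $\cos\beta=\lambda_{min}(\mathsf A_s)/\|\mathsf A\|_2\in(0,1]$.

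\emph{Origin of the constants.} GMRES minimizes $\|p(\mathsf A)\boldsymbol r^{(0)}\|_2$ over polynomials with $p(0)=1$ and $\deg p\leq m$. Two ingredients then give the bound.
\begin{itemize}
\item The Crouzeix-type bound $\|p(\mathsf A)\|_2\leq (2+2/\sqrt3)\max_{z\in W(\mathsf A)}|p(z)|$, used in \cite{beckermann2006}, supplies the factor $2+2/\sqrt3$.
\item Faber polynomials of the convex set $W(\mathsf A)$, normalized at $0$, supply the factor $(2+\gamma_\beta)\gamma_\beta^m$. Here $\gamma_\beta$ bounds the inverse exterior conformal modulus of the origin relative to $W(\mathsf A)$, in terms of the angle $\beta$ alone.
\end{itemize}
This is the only deep step, and it is the main obstacle. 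I would import it verbatim from \cite{beckermann2006} rather than rederive the conformal-map estimate.

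\emph{Iteration estimate.} For \eqref{eq:iteration_estimate} I would first show $0<\gamma_\beta<1$. For $\beta\in(0,\pi/2)$ we have $4-2\beta/\pi\in(3,4)$, so $\beta/(4-2\beta/\pi)\in(0,\pi/6)$, and therefore $\gamma_\beta=2\sin(\cdot)\in(0,1)$. In particular $\log\gamma_\beta<0$. Requiring the right-hand side of \eqref{eq:res_gmres} to be at most $\epsilon$ is equivalent, after taking logarithms, to
\[
m\log\gamma_\beta\leq \log\epsilon-\log\bigl((2+2/\sqrt3)(2+\gamma_\beta)\bigr).
\]
Dividing by the negative number $\log\gamma_\beta$ reverses the inequality and yields \eqref{eq:iteration_estimate}. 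Any $m\geq1$ satisfying it then gives $\|\boldsymbol r^{(m)}\|_2/\|\boldsymbol r^{(0)}\|_2\leq\epsilon$ by \eqref{eq:res_gmres}.
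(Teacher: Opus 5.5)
Your proposal is correct and takes the same approach as the paper. The paper does not reprove the residual bound; it simply restates Theorem 2.1 of \cite{beckermann2006}. It obtains the iteration count exactly as you do, in the proof of Theorem~\ref{thm:convergenceGMRES_weak}: $\beta/(4-2\beta/\pi)\in(0,\pi/6)$ gives $0<\gamma_\beta<1$, and then one takes logarithms and divides by $\log\gamma_\beta<0$. Your account of where the constants $2+2/\sqrt{3}$ and $(2+\gamma_\beta)\gamma_\beta^m$ come from is only background, since you import the estimate verbatim, so it plays no role in the argument.
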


\subsection{Practical implementation of ICDD methods}\label{sec:algorithm}

In this appendix, we provide pseudo-codes to show how the ICDD method \eqref{eq:schur_2} and the dual ICDD method \eqref{eq:schur_dual} can be practically implemented in the framework of matrix-free GMRES iterations.

Algorithm \ref{alg:1} constructs all the relevant stiffness matrices and right-hand side vectors, as well as the intergrid matrices $\mathsf T^0_{\ell k}$ and $\mathsf T^\Gamma_{\ell k}$ ($\ell,k=1,2$, $\ell \not= k$). Remember that these intergrid matrices reduce to \eqref{eq:T0_conforming} and \eqref{eq:Tgamma_conforming} in the case of conforming meshes on the overlap.

Algorithm \ref{alg:2} assembles the right-hand side $\boldsymbol{\chi}$ of the Schur complement system \eqref{eq:schur_2}.

Algorithm \ref{alg:5} provides an overview of the implementation of the ICDD method, with Algorithm \ref{alg:4} detailing the steps of a Krylov method and Algorithm \ref{alg:3} explaining how th matrix-vector product ${\mathsf \Sigma}\zetabv$ should be performed for a generic given vector $\zetabv$.

Finally, Algorithms \ref{alg:3dual}--\ref{alg:5dual} are the counterparts of Algorithms \ref{alg:3}--\ref{alg:5} in the context of the dual ICDD method.

\begin{algorithm}[h!]
\caption{Local matrices assembling (in parallel by any available code)} \label{alg:1}
\begin{algorithmic}[1]
\Procedure{Assemble\,}{meshes, data}
\For{$k=1,2$}
\State{set $\ell=3-k$}
\State \emph{Assemble the arrays for the problem in $\Omega_k$:}
\State $\mathsf A_{kk},\, \mathsf A_{k\Gamma_k}$ stiffness matrices
\State $\mathsf f_k$ right-hand side
\State $\mathsf T^0_{\ell k},\ \mathsf T^\Gamma_{\ell k}$ intergrid matrices from $\overline\Omega_k$ to $\Gamma_\ell$
\EndFor
\State \Return{arrays $\mathsf A_{11},\, \mathsf A_{1\Gamma_1},\, \mathsf A_{22},\, \mathsf A_{2\Gamma_2},\, \mathsf f_1,\, \mathsf f_2,\, \mathsf T^0_{21},\ \mathsf T^0_{12},\ \mathsf T^\Gamma_{21},\ \mathsf T^\Gamma_{12}$
\EndProcedure}
\end{algorithmic}
\end{algorithm}	

\begin{algorithm}[h!]
\caption{Computation of the right-hand side $\boldsymbol\chi$ of system \eqref{eq:schur_2}} \label{alg:2}
\begin{algorithmic}[1]
\Procedure{SchurRHS\,}{$\mathsf A_{11},\, \mathsf A_{22},\, \mathsf f_1,\, \mathsf f_2,\, \mathsf T^0_{21},\, \mathsf T^0_{12}$}
\For{$k=1,2$}
\State{set $\ell=3-k$}
\State\emph{Solve the problem in $\Omega_k$ with  homogeneous Dirichlet datum on $\Gamma_k$:}
\State $\mathsf A_{kk} \widehat{\uv}_k =
        \fv_k $
\State\emph{Interpolate } $\widehat{\uv}_k$ \emph{at the nodes of $\Gamma_\ell$:}
\State $\chiv_\ell=\mathsf T^0_{\ell k} \widehat{\uv}_k$ 
\EndFor
\State \Return{
$\chibv=[\chiv_1,\chiv_2]^T$}
\EndProcedure
\end{algorithmic}
\end{algorithm}	

\begin{algorithm}[h!]
\caption{Given $\zetabv= [\zetav_1,\zetav_2]^T$, compute $\mathsf \psibv =\mathsf \Sigma \zetabv$} \label{alg:3}
\begin{algorithmic}[1]
\Procedure{SchurEval\,}{ $\zetabv,\, \mathsf A_{11},\, \mathsf A_{1\Gamma_1},\, \mathsf A_{22},\, \mathsf A_{2\Gamma_2},\, \mathsf T^0_{12},\ \mathsf T^0_{21},\ \mathsf T^\Gamma_{12},\ \mathsf T^\Gamma_{21}$} 
\For{k=1,2}
\State{set $\ell=3-k$}
\State\emph{Solve the problem in $\Omega_k$ with Dirichlet datum $\zetav_k$ on $\Gamma_k$ (all other problem data are zero):}
\State $\mathsf A_{kk} {\uv}_k^0 =
        -\mathsf A_{k\Gamma_k} \zetav_k $
\State $\widetilde{\uv}_k=[{\uv}_k^0,\zetav_k]^T$
\State\emph{Interpolate} $\widetilde{\uv}_k$ \emph{at the nodes of $\Gamma_\ell$:}
        \State $\etav_\ell =\mathsf T^\Gamma_{\ell k}\zetav_k+\mathsf T^0_{\ell k}\uv^0_k$ 
\EndFor
\State \Return{
$\psibv=[\zetav_1-\etav_1,\, \zetav_2-\etav_2]^T$}

\EndProcedure
\end{algorithmic}
\end{algorithm}	

\begin{algorithm}[h!]
\caption{Solve the Schur complement system (\ref{eq:schur_2})} \label{alg:4}
\begin{algorithmic}[1]
\Procedure{SchurSolve\,}{$\chibv,\, \mathsf A_{11},\, \mathsf A_{1\Gamma_1},\, \mathsf A_{22},\, \mathsf A_{2\Gamma_2}, \, \mathsf T^0_{12},\ \mathsf T^0_{21},\ \mathsf T^\Gamma_{12},\ \mathsf T^\Gamma_{21}$ }
\State given $\zetabv^{(0)}=[\zetav_1^{(0)},\, \zetav_2^{(0)}]^T$:
\For{$m=0,\ldots,$ until convergence}
\State \emph{m-th Krylov iteration}
\State $\ldots$
\State \emph{Evaluate} $\psibv^{(m)}=\mathsf \Sigma \zetabv^{(m)}$:
\State $\psibv^{(m)}$ = \Call{SchurEval\,}{$\zetabv^{(m)},\, \mathsf A_{11},\, \mathsf A_{1\Gamma_1},\, \mathsf A_{22},\, \mathsf A_{2\Gamma_2},\, \mathsf T^0_{12},\ \mathsf T^0_{21},\ \mathsf T^\Gamma_{12},\ \mathsf T^\Gamma_{21}$}
\State $\ldots$
\State \emph{End Krylov iteration}
\EndFor
\State \Return{$\psibv^{(m)}$ at convergence}
\EndProcedure
\end{algorithmic}
\end{algorithm}	

\begin{algorithm}[h!]
\caption{ICDD solver} \label{alg:5}
\begin{algorithmic}[1]
\Procedure{ICDDSolver}{meshes, FE, data}
\State\emph{Assemble local matrices using any available code}
\State 
$ [\mathsf A_{11},\, \mathsf A_{1\Gamma_1},\, \mathsf A_{22},\, \mathsf A_{2\Gamma_2},\, \mathsf f_1,\, \mathsf f_2,\, \mathsf T^0_{21},\ \mathsf T^0_{12},\ \mathsf T^\Gamma_{21},\ \mathsf T^\Gamma_{12}] \gets\Call{Assemble}{\text{meshes, FE, data}}$
\State \emph{Compute the right hand side} $\chibv$ of the Schur--complement system
\State $\chibv\gets\Call{SchurRHS}{\mathsf A_{11},\, \mathsf A_{22},\, \mathsf f_1,\, \mathsf f_2,\, \mathsf T^0_{21},\, \mathsf T^0_{12}}$
\State \emph{Solve} $\mathsf \Sigma \lbv = \chibv$ \emph{by a Krylov method}
\State $\lbv\gets\Call{SchurSolve}{\chibv,\, \mathsf A_{11},\, \mathsf A_{1\Gamma_1},\, \mathsf A_{22},\, \mathsf A_{2\Gamma_2}, \, \mathsf T^0_{12},\ \mathsf T^0_{21},\ \mathsf T^\Gamma_{12},\ \mathsf T^\Gamma_{21}}$
\State\emph{Solve the complete local problems:}
\For{$k=1,2$}
\State $\mathsf A_{kk} \uv_k^0  = 
        \fv_k -\mathsf A_{k\Gamma_k} \lv_k$ 
\State $\uv_k=[\uv^0_k,\lv_k]^T$
\EndFor
\State \Return{$\uv_1,\ \uv_2$}
\EndProcedure
\end{algorithmic}
\end{algorithm}

\begin{algorithm}[h!]
\caption{Given $\zetabv= [\zetav_1,\zetav_2]^T$, compute $\mathsf \psibv =\mathsf \Sigma_D \zetabv$} \label{alg:3dual}
\begin{algorithmic}[1]
\Procedure{DualSchurEval\,}{ $\zetabv,\, \mathsf A_{11},\, \mathsf A_{1\Gamma_1},\, \mathsf A_{22},\, \mathsf A_{2\Gamma_2},\, \mathsf T^0_{12},\ \mathsf T^0_{21},\ \mathsf T^\Gamma_{12},\ \mathsf T^\Gamma_{21}$} 
\For{k=1,2}
\State{set $\ell=3-k$}
\State\emph{Solve the problem in $\Omega_k$ with Dirichlet datum $\zetav_k$ on $\Gamma_k$ (all other problem data are zero):}
\State $\mathsf A_k {\uv}_k^0 =
        -\mathsf A_{k\Gamma_k} \zetav_k $
\State $\widetilde{\uv}_k=[{\uv}_k^0,\zetav_k]^T$
\State\emph{Interpolate} $\widetilde{\uv}_k$ \emph{at the nodes of $\Gamma_\ell$:}
        \State $\etav_\ell =\mathsf T^\Gamma_{\ell k}\zetav_k+\mathsf T^0_{\ell k}\uv^0_k$ 
\EndFor
\State \Return{
$\psibv=[\zetav_1+\etav_1,\, \zetav_2+\etav_2]^T$}

\EndProcedure
\end{algorithmic}
\end{algorithm}	

\begin{algorithm}[h!]
\caption{Solve the Schur complement system $\mathsf \Sigma_D\lbv=\boldsymbol\chi_D$} \label{alg:4dual}
\begin{algorithmic}[1]
\Procedure{DualSchurSolve\,}{$\chibv_D,\, \mathsf A_{11},\, \mathsf A_{1\Gamma_1},\, \mathsf A_{22},\, \mathsf A_{2\Gamma_2}, \, \mathsf T^0_{12},\ \mathsf T^0_{21},\ \mathsf T^\Gamma_{12},\ \mathsf T^\Gamma_{21}$ }
\State given $\zetabv^{(0)}=[\zetav_1^{(0)},\, \zetav_2^{(0)}]^T$:
\For{$m=0,\ldots,$ until convergence}
\State \emph{m-th Krylov iteration}
\State $\ldots$
\State \emph{Evaluate} $\psibv^{(m)}=\mathsf \Sigma_D \zetabv^{(m)}$:
\State $\phibv^{(m)}$ = \Call{SchurEval\,}{$\zetabv^{(m)},\, \mathsf A_{11},\, \mathsf A_{1\Gamma_1},\, \mathsf A_{22},\, \mathsf A_{2\Gamma_2},\, \mathsf T^0_{12},\ \mathsf T^0_{21},\ \mathsf T^\Gamma_{12},\ \mathsf T^\Gamma_{21}$}
\State $\psibv^{(m)}$ = \Call{DualSchurEval\,}{$\phibv^{(m)},\, \mathsf A_{11},\, \mathsf A_{1\Gamma_1},\, \mathsf A_{22},\, \mathsf A_{2\Gamma_2},\, \mathsf T^0_{12},\ \mathsf T^0_{21},\ \mathsf T^\Gamma_{12},\ \mathsf T^\Gamma_{21}$}
\State $\ldots$
\State \emph{End Krylov iteration}
\EndFor
\State \Return{$\psibv^{(m)}$ at convergence}
\EndProcedure
\end{algorithmic}
\end{algorithm}	

\begin{algorithm}[h!]
\caption{Dual ICDD solver} \label{alg:5dual}
\begin{algorithmic}[1]
\Procedure{DualICDDSolver}{meshes, FE, data}
\State\emph{Assemble local matrices using any available code}
\State 
$ [\mathsf A_{11},\, \mathsf A_{1\Gamma_1},\, \mathsf A_{22},\, \mathsf A_{2\Gamma_2},\, \mathsf f_1,\, \mathsf f_2,\, \mathsf T^0_{21},\ \mathsf T^0_{12},\ \mathsf T^\Gamma_{21},\ \mathsf T^\Gamma_{12}] \gets\Call{Assemble}{\text{meshes, FE, data}}$
\State \emph{Compute the right hand side} $\chibv_D$ of the Schur--complement system $\mathsf \Sigma_D\lbv=\boldsymbol\chi_D$
\State $\chibv\gets\Call{SchurRHS}{\mathsf A_{11},\, \mathsf A_{22},\, \mathsf f_1,\, \mathsf f_2,\, \mathsf T^0_{21},\, \mathsf T^0_{12}}$
\State $\chibv_D\gets\Call{DualSchurEval}{\chibv,\mathsf A_{11},\, \mathsf A_{1\Gamma_1},\, \mathsf A_{22},\, \mathsf A_{2\Gamma_2},\, \mathsf T^0_{12},\ \mathsf T^0_{21},\ \mathsf T^\Gamma_{12},\ \mathsf T^\Gamma_{21}}$
\State \emph{Solve} $\mathsf\Sigma_D\lbv = \chibv_D$ \emph{by a Krylov method}
\State $\lbv\gets\Call{DualSchurSolve}{\chibv_D,\, \mathsf A_{11},\, \mathsf A_{1\Gamma_1},\, \mathsf A_{22},\, \mathsf A_{2\Gamma_2}, \, \mathsf T^0_{12},\ \mathsf T^0_{21},\ \mathsf T^\Gamma_{12},\ \mathsf T^\Gamma_{21}}$
\State\emph{Solve the complete local problems:}
\For{$k=1,2$}
\State $\mathsf A_{kk} \uv_k^0  = 
        \fv_k -\mathsf A_{k\Gamma_k} \lv_k$ 
\State $\uv_k=[\uv^0_k,\lv_k]^T$
\EndFor
\State \Return{$\uv_1,\ \uv_2$}
\EndProcedure
\end{algorithmic}
\end{algorithm}

\bibliographystyle{siam}
\bibliography{bibl}

\end{document}